\theoremstyle{plain}
\newtheorem{theorem}{Theorem}[section]
\newtheorem{proposition}[theorem]{Proposition}
\newtheorem{lemma}[theorem]{Lemma}
\newtheorem{fact}[theorem]{Fact}
\theoremstyle{definition}
\newtheorem{definition}[theorem]{Definition}
\theoremstyle{remark}
\newtheorem{remark}[theorem]{Remark}
\newcommand{\opnorm}{\@ifstar\@opnorms\@opnorm}
\newcommand{\@opnorms}[1]{%
  \left|\mkern-1.5mu\left|\mkern-1.5mu\left|
   #1
  \right|\mkern-1.5mu\right|\mkern-1.5mu\right|
}
\newcommand{\@opnorm}[2][]{%
  \mathopen{#1|\mkern-1.5mu#1|\mkern-1.5mu#1|}
  #2
  \mathclose{#1|\mkern-1.5mu#1|\mkern-1.5mu#1|}
}
\begin{document}

\title[Amalgamations of Banach spaces]{Amalgamations of classes of Banach spaces with~a~monotone basis}
\author{Ond\v{r}ej Kurka}
\thanks{The research was supported by the grant GA\v{C}R 14-04892P. The author is a junior researcher in the University Centre for Mathematical Modelling, Applied Analysis and Computational Mathematics (MathMAC). The author is a member of the Ne\v{c}as Center for Mathematical Modeling.}
\address{Department of Mathematical Analysis, Charles University, Soko\-lovsk\'a 83, 186 75 Prague 8, Czech Republic}
\email{kurka.ondrej@seznam.cz}
\keywords{Isometrically universal Banach space, Effros-Borel structure, Analytic set, Monotone basis, Tree space}
\subjclass[2010]{Primary 46B04, 54H05; Secondary 46B15, 46B20, 46B70}
\begin{abstract}
It was proved by Argyros and Dodos that, for many classes $ \mathcal{C} $ of separable Banach spaces which share some property $ P $, there exists an isomorphically universal space that satisfies $ P $ as well. We introduce a variant of their amalgamation technique which provides an isometrically universal space in the case that $ \mathcal{C} $ consists of spaces with a monotone Schauder basis. For example, we prove that if $ \mathcal{C} $ is a set of separable Banach spaces which is analytic with respect to the Effros-Borel structure and every $ X \in \mathcal{C} $ is reflexive and has a monotone Schauder basis, then there exists a separable reflexive Banach space that is isometrically universal for $ \mathcal{C} $.
\end{abstract}
\maketitle

\section{Introduction and the main result}

Let $ \mathcal{C} $ be a class of Banach spaces. We say that a Banach space $ X $ is \emph{isomorphically (isometrically) universal for $ \mathcal{C} $} if it contains an isomorphic (isometric) copy of every member of $ \mathcal{C} $.

The present paper deals with universality questions in separable Banach space theory. Our aim is to find an isometric version of the amalgamation theory of S.~A.~Argyros and P.~Dodos \cite{argyrosdodos} and provide a method how to construct small isometrically universal spaces for small families of Banach spaces. Many of the results considered in the paper employs methods from descriptive set theory. The connection of universality problems and descriptive set theory, discovered by J.~Bourgain \cite{bourgain1, bourgain2}, deepened the theory and enabled several intrinsic questions to be understood. (See also \cite{bossard2}, \cite{dodosferenczi}, \cite{dodos}, \cite{feloro}, for an introduction, see \cite{godefroypersp}).

In 1968, W.~Szlenk \cite{szlenk} proved that the class of separable reflexive spaces has no isomorphically universal element. (It had been shown some time ago by J.~Lindenstrauss \cite{lindenstrauss} that it has no isometrically universal element). He proved that a Banach space which is isomorphically universal for separable reflexive spaces has non-separable dual. His proof led to the famous Szlenk index which will be useful also in proofs of the present results.

Later, J.~Bourgain \cite{bourgain1} proved that, if a separable Banach space is isomorphically universal for separable reflexive spaces, then it is actually isomorphically universal for all separable Banach spaces. A somewhat different proof of this result was provided by B.~Bossard \cite{bossard2} who showed that, if an analytic set of separable Banach spaces contains all separable reflexive spaces up to isomorphism, then it contains a space which is isomorphically universal for all separable Banach spaces. (An analytic set of Banach spaces is defined in Section~\ref{sec:prelim}). For a separable Banach space $ X $, the set of all Banach spaces with an isomorphic copy in $ X $ is analytic. Therefore, Bourgain's result follows from Bossard's one.

Bossard's approach consists in constructing a tree space such that every infinite branch supports a universal space and every tree without infinite branches supports a reflexive space. It is possible to apply this approach on analogous questions concerning isometry as well. It was shown in \cite{godkal} that, if a separable Banach space is isometrically universal for separable strictly convex spaces, then it is actually isometrically universal for all separable Banach spaces. The same result holds for the class of reflexive spaces \cite{kurka}.

In a work of S.~A.~Argyros and P.~Dodos \cite{argyrosdodos}, the concept of a tree space turned out to be a powerful tool also for constructing universal spaces (see also \cite{dodostopics}). When a set of separable Banach spaces $ \mathcal{C} $ is simple (in the sense that $ \mathcal{C} $ is analytic and every member has a Schauder basis), then it is possible to find a tree space such that the spaces supported by infinite branches are isomorphic copies of all members of $ \mathcal{C} $. If the tree space is constructed properly, properties of spaces from $ \mathcal{C} $ can be preserved.

Some results of the Argyros-Dodos amalgamation theory are resumed in the following theorem (by a basis we mean a Schauder basis).

\begin{theorem}[\cite{argyrosdodos}] \label{thmargdod}
Let $ \mathcal{P} $ be one of the following classes of separable Banach spaces:
\begin{itemize}
\item the class of spaces with a shrinking basis,
\item the class of reflexive spaces with a basis,
\item the class of spaces with a basis which are not isomorphically universal for all separable Banach spaces.
\end{itemize}
Let $ \mathcal{C} $ be an analytic set of spaces from $ \mathcal{P} $. Then there exists a Banach space $ E $ which belongs to $ \mathcal{P} $ and which contains a complemented isomorphic copy of every member of $ \mathcal{C} $.
\end{theorem}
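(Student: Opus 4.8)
The approach is the Argyros--Dodos amalgamation scheme: first replace $\mathcal{C}$ by a tree-indexed family of block subspaces sitting inside one fixed universal space, then glue these copies together along the tree with an $\ell_2$-type norm to obtain a single space $E$, and finally check, case by case, that $E$ inherits membership in $\mathcal{P}$ together with a suitable basis. \emph{Step 1 (a Schauder tree basis for $\mathcal{C}$).} Fix a separable space $U$ with a normalized bimonotone Schauder basis which is isometrically universal for all separable Banach spaces. After an equivalent renorming (harmless, since each class $\mathcal{P}$ is stable under isomorphism) every member of $\mathcal{C}$ carries a monotone basis. Using that $\mathcal{C}$ is analytic, hence the projection of a closed set, together with the Kuratowski--Ryll-Nardzewski selection theorem and the fact that the set of codes of normalized bimonotone basic sequences of $U$ is Borel, one constructs a pruned tree $\Lambda$ on a countable alphabet and vectors $d_\lambda\in U$ $(\lambda\in\Lambda)$ such that: for each branch $\sigma\in[\Lambda]$ the sequence $(d_{\sigma|n})_n$ is a normalized, uniformly bimonotone block basic sequence of the basis of $U$, its closed linear span $X_\sigma$ is isomorphic to a member of $\mathcal{C}$, and conversely every member of $\mathcal{C}$ is isomorphic to some $X_\sigma$. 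This is the only point at which analyticity is used, and the construction is identical in the three cases.

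\emph{Step 2 (the amalgamation space and the complemented copies).} Let $E$ be the completion of $c_{00}(\Lambda)$ in a tree $\ell_2$-norm of the shape
\[
  \|x\|_E = \sup\Bigl\{\Bigl(\sum_{i=1}^{k}\bigl\|\sum_{\lambda\in s_i}x(\lambda)\,d_\lambda\bigr\|_U^2\Bigr)^{1/2}\Bigr\},
\]
the supremum being over admissible finite families $(s_i)_{i\le k}$ of segments of $\Lambda$, where the admissibility condition is arranged so that along any one branch the expression reduces to the $X_\sigma$-norm while families spread over distinct branches are genuinely combined in $\ell_2$. Enumerating $\Lambda$ level by level makes the unit vectors $(e_\lambda)_{\lambda\in\Lambda}$ a bimonotone Schauder basis of the separable space $E$. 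For each $\sigma\in[\Lambda]$ one verifies that the inclusion $J_\sigma\colon X_\sigma\to E$, $d_{\sigma|n}\mapsto e_{\sigma|n}$, together with the coordinate projection $P_\sigma\colon E\to\overline{\mathrm{span}}\{e_{\sigma|n}:n\}$ exhibit $X_\sigma$ as a complemented subspace of $E$ with constants not depending on $\sigma$. Combined with Step 1, this yields a complemented isomorphic copy of every member of $\mathcal{C}$ inside $E$.

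\emph{Step 3 ($E\in\mathcal{P}$).} This is the technical heart; all three cases rest on \emph{strong boundedness}, i.e.\ on the fact that an analytic set of spaces from $\mathcal{P}$ is dominated by a single countable ordinal in the pertinent index (which is a $\mathbf{\Pi}^1_1$-rank on the relevant coanalytic class, so the classical boundedness theorem supplies the ordinal). \emph{(a) Shrinking basis.} One shows $(e_\lambda)$ is shrinking whenever every $(d_{\sigma|n})$ is: given a normalized block sequence $(u_k)$ of $(e_\lambda)$ and $\varphi\in E^*$ with $\inf_k\varphi(u_k)>0$, a Ramsey/pigeonhole analysis of the tree-supports of the $u_k$ yields a subsequence that is either spread across pairwise incomparable parts of $\Lambda$ --- hence $\ell_2$-like and weakly null, contradicting $\inf_k\varphi(u_k)>0$ --- or eventually supported along a single branch $\sigma$, where it is a block sequence of the shrinking basis of $X_\sigma$, again a contradiction. \emph{(b) Reflexive spaces with a basis.} Here the uniform bound is on the Szlenk index and the $\ell_1$-index of the family $\{X_\sigma\}$; one shows that, pushed through the $\ell_2$-tree structure, it keeps the Szlenk index of $E$ countable and forbids $\ell_1\hookrightarrow E$, so $E$ has separable dual and contains no $\ell_1$, i.e.\ $E$ is reflexive (equivalently, the argument of (a) applied to $E$ and then to $E^*$ gives a shrinking, boundedly complete basis of $E$). \emph{(c) Not universal.} A separable space is isomorphically universal iff it contains a copy of $C(\Delta)$ (which is itself universal), and non-universality is detected by a suitable ordinal index that is a $\mathbf{\Pi}^1_1$-rank and hence bounded on $\mathcal{C}$; tracing that bound through the amalgamation shows $C(\Delta)\not\hookrightarrow E$.

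\emph{Main obstacle.} Steps 1 and 2 are, respectively, routine descriptive set theory and a direct norm computation once the admissibility condition is calibrated correctly. The real work is Step 3, and its decisive mechanism is the transfer of a \emph{uniform} ordinal bound for the indices of the family $\{X_\sigma\}$ into a \emph{single} such bound for $E$: one must run a transfinite/block-sequence analysis of $E$ structured by the tree $\Lambda$ and argue that any obstruction to membership in $\mathcal{P}$ (an $\ell_1$-sequence, an embedded $C(\Delta)$, a non-shrinking block sequence) can be localized either to a single branch --- where the hypothesis on $X_\sigma$ kills it --- or to an $\ell_2$-direction of the amalgam, where it cannot occur at all. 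Keeping the quantitative bookkeeping of the ordinal indices under control is the crux.
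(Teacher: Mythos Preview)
The paper does not prove this theorem; it is quoted from \cite{argyrosdodos} as background, and the paper's own contribution is the isometric analogue, Theorem~\ref{thmmain}. The closest the paper comes is Remark~(I) after Theorem~\ref{thmmain}, observing that for the shrinking-basis and reflexive cases Theorem~\ref{thmargdod} can be recovered from Theorem~\ref{thmmain} (renorm so the basis is monotone, then apply the isometric result). There is therefore nothing in the paper to compare your argument against directly.

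Your sketch is a reasonable high-level outline of the original Argyros--Dodos argument (Schauder tree basis, $\ell_{2}$-Baire-sum amalgam, then case analysis via ordinal-index bounds). Note, however, that the tree-space machinery actually developed in \emph{this} paper is deliberately different: Definition~\ref{def:E} uses a supremum-over-branches norm in the style of Bossard rather than the James-tree/$\ell_{2}$-segment norm you write down, and reflexivity is obtained not by a block-sequence dichotomy but via the Davis--Figiel--Johnson--Pe\l czy\'nski interpolation (Definition~\ref{def:A}, Proposition~\ref{reflexivity}). So even in the two cases where the paper's main theorem yields Theorem~\ref{thmargdod} as a corollary, the route taken is not the one you describe.

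One minor caution on your Step~3(b): ``separable dual and no $\ell_{1}$'' does not by itself force reflexivity (the James tree space is a counterexample); the parenthetical version you give --- the basis of $E$ is shrinking and boundedly complete --- is the argument that actually works.
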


The reliance on a basis was dropped soon in works of P.~Dodos and V.~Ferenczi \cite{dodosferenczi} and P.~Dodos \cite{dodos}. They proved that Theorem~\ref{thmargdod} holds (without the property that the copies are complemented) also for the following classes:
\begin{itemize}
\item the class of spaces with separable dual \cite{dodosferenczi},
\item the class of separable reflexive spaces \cite{dodosferenczi},
\item the class of separable spaces which are not isomorphically universal for all separable Banach spaces \cite{dodos}.
\end{itemize}

In the present work, we study the problem whether these results have an isometric version (see also \cite[Problem 9]{godefroyprobl}). We establish an isometric variant of Theorem~\ref{thmargdod}.

A basis $ x_{1}, x_{2}, \dots $ is said to be \emph{monotone} if the associated partial sum operators $ P_{n} : \sum_{k=1}^{\infty} a_{k}x_{k} \mapsto \sum_{k=1}^{n} a_{k}x_{k} $ satisfy $ \Vert P_{n} \Vert \leq 1 $.

\begin{theorem} \label{thmmain}
Let $ \mathcal{P} $ be one of the following classes of separable Banach spaces:
\begin{itemize}
\item the class of spaces with a monotone shrinking basis,
\item the class of reflexive spaces with a monotone basis,
\item the class of spaces with a monotone basis which are not isometrically universal for all separable Banach spaces,
\item the class of strictly convex spaces with a monotone basis.
\end{itemize}
Let $ \mathcal{C} $ be an analytic set of spaces from $ \mathcal{P} $. Then there exists a Banach space $ E $ which belongs to $ \mathcal{P} $ and which contains an $ 1 $-complemented isometric copy of every member of $ \mathcal{C} $.
\end{theorem}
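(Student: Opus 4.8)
The plan is to mimic the Bossard–Argyros–Dodos tree-space construction, but with enough care that the amalgam is isometric rather than merely isomorphic. First I would fix a coding of $\mathcal{C}$: since $\mathcal{C}$ is analytic and consists of spaces with a monotone basis, I would produce (using a selection/uniformization argument in the spirit of \cite{argyrosdodos}) a Polish parameter space $\Lambda$ together with a ``universal family'' $(Y_\lambda)_{\lambda\in\Lambda}$ such that each $Y_\lambda$ carries a normalized monotone basis, the map $\lambda\mapsto Y_\lambda$ is Borel in the appropriate sense, and every member of $\mathcal{C}$ is isometric to some $Y_\lambda$. Crucially, because the basis is monotone, the norm of $Y_\lambda$ is determined by its restrictions to finitely supported vectors, i.e.\ by a sequence of norms on $\mathbb{R}^n$, and monotonicity says precisely that these finite-dimensional norms increase with $n$ in the natural projective sense; this is the feature that will let me glue without distortion. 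I would then build a tree $T$ on a countable alphabet whose nodes are (codes for) finite-dimensional normed pieces, so that each infinite branch $\beta$ reproduces exactly one $Y_\lambda$ as $\overline{\bigcup_n [\text{node at level }n]}$, and define $E$ as a carefully normed space spanned by vectors indexed by the nodes of $T$.

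The key steps, in order: (1) set up the tree $T$ and, on the vector space $c_{00}(T)$, define a norm by the Argyros–Dodos amalgamation formula, but with the ``$\ell_2$-glue'' between distinct branches replaced by whatever interpolation gives isometric (not just isomorphic) embeddings along each branch — here I would use a norm of the form $\|x\| = \big(\sum \|x\!\restriction\!\text{segment}\|_{Y}^p\big)^{1/p}$ over a decomposition of the support into maximal segments, choosing $p$ and the segment decomposition so that (a) each branch space embeds \emph{isometrically} and $1$-complementedly via the obvious inclusion $Y_\lambda\hookrightarrow E$ onto the closed span of the branch, and (b) $E$ itself lies in $\mathcal{P}$. (2) Verify the monotone-basis structure of $E$: order the nodes of $T$ compatibly with a bijection $T\to\mathbb{N}$ refining the tree order, check the partial-sum projections have norm $1$ (this should be immediate from the segment-wise definition of the norm, since truncating a vector truncates each segment and the $Y$-norms are monotone). (3) The inheritance of $\mathcal{P}$: for the reflexive case, show $E$ is reflexive by combining James's criterion with the fact that $T$ is well-founded whenever all $Y_\lambda$ are reflexive is \emph{not} what happens — rather, I must argue directly that $c_0$ and $\ell_1$ do not embed, using the Szlenk index (for shrinking/reflexive) exactly as in \cite{szlenk, bourgain1} lifted through the amalgamation, and for strict convexity, check that the chosen norm on $E$ is strictly convex provided each $Y_\lambda$ is and the gluing $p$-sum with $1<p<\infty$ preserves it. (4) The ``not isometrically universal'' case: use the characterization (as in \cite{godkal, kurka}) that isometric universality is equivalent to isometric universality for strictly convex spaces, reducing it to the strictly convex case already handled, or alternatively run a Bossard-type argument showing the tree-space construction cannot accidentally produce an isometrically universal $E$ if no $Y_\lambda$ is.

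The main obstacle I anticipate is step (1): making the embedding of each branch space \emph{isometric} and simultaneously $1$-complemented, while keeping $E$ inside $\mathcal{P}$. In the Argyros–Dodos setup one has the freedom of equivalent renormings, which is exactly what one loses here; the amalgamation norm must restrict \emph{on the nose} to the given norm of each $Y_\lambda$ on the corresponding branch, and the complementing projection (averaging/retracting $T$ onto a branch) must be a norm-one operator. Monotonicity of the bases is what makes this plausible — it forces the finite-dimensional norms to be compatible along the tree order — but one still has to choose the inter-branch combination so delicately that no infinite branch ``sees'' the others (else the embedding would be strict), which I expect requires a $p$-sum over a node-by-node segment decomposition together with a proof that the canonical projection onto a branch genuinely has norm one. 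Once this normed space is correctly defined, steps (2)–(4) should follow the established templates with only routine modifications.
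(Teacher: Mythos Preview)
Your outline misses the paper's central technical innovation and gets the amalgamation norm wrong, so several of your steps would not go through.

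\textbf{The discretization of branches.} You propose a tree whose nodes are ``codes for finite-dimensional normed pieces,'' but there are uncountably many such pieces, so the tree is not on a countable alphabet. The paper's key device (Proposition~6.2) is to replace each $X\in\mathcal{C}$ by an auxiliary space $F\cong\ell_2(X)$, constructed so that every initial segment $\mathrm{span}\{f_1,\dots,f_d\}$ is a \emph{rational} space (polyhedral unit ball with rational vertices). Only then are there countably many possible finite-dimensional initial pieces, and one can index them by $\mathbb{N}^{<\mathbb{N}}$. Crucially, $F$ still contains a $1$-complemented isometric copy of $X$ (via an explicit operator $U$ and the estimate $\|Tf\|_X\le\|f\|$). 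Your ``selection/uniformization'' sketch does not produce this countability.

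\textbf{The amalgamation norm.} You propose an $\ell_p$-sum over maximal segments, which is the James-tree / Argyros--Dodos mechanism. The paper explicitly rejects this for isometric purposes and uses the Bossard-type norm
\[
\|x\|=\sup_{\sigma\in[T]}\Big\|\sum_{\eta\subset\sigma}x(\eta)\,f^\sigma_{|\eta|}\Big\|_\sigma .
\]
With this supremum, monotonicity of each branch basis immediately gives $\|P_S x\|\le\|x\|$ for every subtree $S$, hence each branch space embeds isometrically and $1$-complementedly (Fact~3.2). Your $\ell_p$-segment norm would not give norm-one branch projections in general, and an $\ell_p$-sum with $p>1$ would distort the branch norm anyway.

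\textbf{Different constructions for different $\mathcal{P}$.} The paper does \emph{not} use one amalgam for all four classes. For shrinking bases it uses the basic tree space $E$; for reflexivity it applies the Davis--Figiel--Johnson--Pe\l czy\'nski interpolation to $E$ with $W=\overline{\mathrm{co}}\,\Phi$ (the set $\Phi$ is weakly compact by Lemma~3.6, and the interpolation preserves the $1$-complemented isometric copies by Lemma~4.2). Your plan to ``argue directly that $c_0$ and $\ell_1$ do not embed'' does not work: the raw tree space $E$ need not be reflexive even when every $F_\sigma$ is. For the non-universal and strictly convex cases the paper uses a \emph{different} tree space $B$ (Definition~5.1) requiring the quantitative monotonicity condition~(5.1) on the branch bases; this condition is manufactured by the renormings $\|\cdot\|_I$ and $\|\cdot\|_{II}$ of Sections~7--8, which simultaneously ensure that any line segment in the sphere is pushed into the embedded copy of $X$ (Lemmas~7.6 and~8.6). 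Your reduction of the non-universal case to the strictly convex case via \cite{godkal,kurka} is a statement about universal \emph{targets}, not a construction principle for the amalgam; the paper instead proves directly (Proposition~5.5, Proposition~7.8) that an isometric copy of $C(\{0,1\}^{\mathbb{N}})$ in $B$ would force some branch space to be universal.

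In short, the monotone basis alone does not let you glue isometrically along a countable tree; you need the rational-branch embedding, the Bossard supremum norm, and class-specific post-processing (DFJP interpolation or the renormings $\|\cdot\|_I,\|\cdot\|_{II}$ feeding into the $B$-tree space).
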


We do not know whether the reliance on a basis can be dropped, similarly as in the isomorphic setting. The requirement of the existence of a monotone basis is a weak point of Theorem~\ref{thmmain}, but it is possible that the theorem will be helpful for more powerful results to be obtained in future.

We include here several remarks concerning Theorem~\ref{thmmain}.

(I) For the class of spaces with a shrinking basis and the class of reflexive spaces with a basis, it is not difficult to show that Theorem~\ref{thmargdod} follows from Theorem~\ref{thmmain}.

(II) The theorem remains valid if we consider monotone finite dimensional decompositions instead of monotone bases. A variant of a space constructed by S.~Prus \cite{prus} can be provided. Since the class of super-reflexive spaces is analytic, there exists a separable reflexive space which contains an $ 1 $-complemented isometric copy of every super-reflexive space with a monotone finite dimensional decomposition. Since $ F \oplus_{2} \ell_{2} $ is super-reflexive for each finite dimensional space $ F $, we obtain also the result of A.~Szankowski \cite{szankowski} which states that there exists a separable reflexive space, isometrically universal for all finite dimensional spaces.

(III) It is possible to use the methods developed in the paper for the construction of a Pe\l czy\'nski universal space which contains an $ 1 $-complemented isometric copy of every Banach space with a monotone basis (see Definition~\ref{univpelcz}). Similar examples have been constructed by J.~Garbuli\'nska-W\c{e}grzyn \cite{garbulinska1, garbulinska2}.

(IV) Theorem~\ref{thmmain} holds for more general classes than the class of non-universal spaces. Let $ Z $ be a separable Banach space for which there are an $ a \in Z $ and a subset $ H \subset Z $ whose closed linear span contains an isometric copy of $ Z $ and such that, for every $ h \in H $, there is an $ \varepsilon > 0 $ with $ \Vert a \pm \varepsilon h \Vert = \Vert a \Vert $. Then the theorem holds for the class of spaces with a monotone basis not containing an isometric copy of $ Z $. Among the universal space $ Z = C(\{ 0, 1 \}^{\mathbb{N}}) $, the required property is fulfilled e.g. by the spaces $ Z = c_{0} $ and $ Z = \ell_{1} $.

(V) If a separable Banach space $ X $ is isomorphically universal for separable Schur spaces, then it is actually isomorphically universal for all separable Banach spaces. This follows from methods in \cite{bossard2} (see \cite[Corollary~51]{braga}). We are able to prove the isometric version of this statement (see Remark~\ref{remschur}).

It is not known if the class of Schur spaces with a basis has the property from Theorem~\ref{thmargdod}. It is not clear whether the tree space method can be used in this case. However, the property is fulfilled by the related class of $ \ell_{1} $-saturated spaces with a basis (see \cite[Theorem~91]{argyrosdodos}).

\section{Preliminaries} \label{sec:prelim}

By $ \Lambda^{< \mathbb{N}} $ we denote the set of all finite sequences of elements of a set $ \Lambda $, including the empty sequence $ \emptyset $. That is,
$$ \Lambda^{< \mathbb{N}} = \bigcup _{l=0}^{\infty } \Lambda^{l} $$
where $ \Lambda^{0} = \{ \emptyset \} $. The length of an $ \eta \in \Lambda^{< \mathbb{N}} $ is denoted by $ |\eta| $. If $ \eta \in \Lambda^{< \mathbb{N}} $ and $ \nu \in \Lambda^{< \mathbb{N}} \cup \Lambda^{\mathbb{N}} $, then by $ \eta \subset \nu $ we mean that $ \eta $ is an initial segment of $ \nu $, i.e., the length of $ \eta $ is less than or equal to the length of $ \nu $ and $ \eta (i) = \nu (i) $ for $ 1 \leq i \leq |\eta| $. By $ (n_{1}, \dots, n_{k})^{\wedge} n $ we mean $ (n_{1}, \dots, n_{k}, n) $. A subset $ T $ of $ \Lambda^{< \mathbb{N}} $ is called a \emph{tree on $ \Lambda $} if
$$ \eta \subset \nu \; \& \; \nu \in T \quad \Rightarrow \quad \eta \in T. $$ 

Moreover, a set $ T \subset \Lambda^{< \mathbb{N}} \setminus \{ \emptyset \} $ is called an \emph{unrooted tree on $ \Lambda $} if $ T \cup \{ \emptyset \} $ is a tree on $ \Lambda $. An (unrooted) tree $ T $ is called \emph{pruned} if every $ \eta \in T $ has a proper extension $ \nu \supset \eta, \nu \neq \eta, \nu \in T $. The set of all infinite branches of $ T $, i.e., sequences $ \nu \in \Lambda^{\mathbb{N}} $ such that $ T $ contains all non-empty initial segments of $ \nu $, is denoted by $ [T] $. An (unrooted) tree $ T $ is called \emph{well-founded} if it does not have an infinite branch.

A \emph{Polish space (topology)} means a separable completely metrizable space (topology). A set $ P $ equipped with a $ \sigma $-algebra is called a \emph{standard Borel space} if the $ \sigma $-algebra is generated by a Polish topology on $ P $. A subset of a standard Borel space is called \emph{analytic} if it is a Borel image of a Polish space.

The following lemma can be found e.g. in~\cite[(25.2)]{kechris}.

\begin{lemma} \label{prunedtree}
A subset $ A \subset \mathbb{N}^{\mathbb{N}} $ is analytic if and only if there is a pruned tree $ T $ on $ \mathbb{N} \times \mathbb{N} $ such that $ A = p[T] $ where $ p : \mathbb{N}^{\mathbb{N}} \times \mathbb{N}^{\mathbb{N}} \to \mathbb{N}^{\mathbb{N}} $ denotes the projection on the first coordinate.
\end{lemma}

For a topological space $ X $, the set $ \mathcal{F}(X) $ of all closed subsets of $ X $ is equipped with the \emph{Effros-Borel structure}, defined as the $ \sigma $-algebra generated by the sets
$$ \{ F \in \mathcal{F}(X) : F \cap U \neq \emptyset \} $$
where $ U $ varies over open subsets of $ X $. If $ X $ is Polish, then, equipped with this $ \sigma $-algebra, $ \mathcal{F}(X) $ forms a standard Borel space.

The \emph{standard Borel space of separable Banach spaces} is defined by
$$ \mathcal{SE}(C([0,1])) = \big\{ F \in \mathcal{F}(C([0,1])) : \textrm{$ F $ is linear} \big\} $$
and considered as a subspace of $ \mathcal{F}(C([0,1])) $.

For a separable Banach space $ X $ and an $ F \subset B_{X^{*}} $, let
$$ F'_{\varepsilon} = F \setminus \bigcup \big\{ U \subset X^{*} : \textrm{$ U $ is $ w^{*} $-open}, \mathrm{diam} (U \cap F) < \varepsilon \big\}, \quad \varepsilon > 0, $$
and recursively
$$ F^{(0)}_{\varepsilon} = F, \quad F^{(\alpha)}_{\varepsilon} = \bigcap_{\beta < \alpha} (F^{(\beta)}_{\varepsilon})'_{\varepsilon}, \quad \varepsilon > 0. $$
We define
$$ \mathrm{Sz}_{\varepsilon}(F) = \min \big( \{ \omega_{1} \} \cup \{ \alpha < \omega_{1} : F^{(\alpha)}_{\varepsilon} = \emptyset \} \big), \quad \varepsilon > 0, $$
$$ \mathrm{Sz}(F) = \sup \{ \mathrm{Sz}_{\varepsilon}(F) : \varepsilon > 0 \}. $$
The \emph{Szlenk index of $ X $} is defined by $ \mathrm{Sz}(X) = \mathrm{Sz}(B_{X^{*}}) $.

For an (unrooted) tree $ T $ and a system $ \{ x_{\eta } : \eta \in T \} $ of elements of a Banach space, we define
$$ \sum _{\eta \in T} x_{\eta } = \lim _{S \to T} \sum _{\eta \in S} x_{\eta } \quad \quad \textrm{(if the limit exists)} $$
where the limit is taken over all finite subtrees $ S \subset T $ directed by inclusion.

The notions and notation we use but do not introduce here are classical and well explained e.g. in \cite{fhhmpz} and \cite{kechris}.

\section{The initial tree space construction}

In this section, we introduce our basic tool for constructing tree spaces. Basically, two ways have been developed how to extract the norm of a tree space from the norms of the subspaces supported by infinite branches (excluding the norm constructed in \cite{kurka}). The first way, based on the well known James tree space \cite{james}, was employed mainly in works of B.~Bossard \cite{bossard2} and S.~A.~Argyros and P.~Dodos \cite{argyrosdodos}.

However, we follow the second way which is more suitable for isometric problems. The method was introduced by B.~Bossard \cite{bossard1} and employed later by G.~Godefroy \cite{godefroy} and G.~Godefroy and N.~J.~Kalton \cite{godkal}. In fact, the tree space from the following definition is a simplified version of the original tree space from \cite{bossard1} which will be introduced later in Definition~\ref{def:B} nevertheless.

\begin{definition} \label{def:E}
Let $ \Lambda $ be a countable set and let $ T $ be a pruned unrooted tree on $ \Lambda $. For every $ \sigma \in [T] $, let $ (F_{\sigma}, \Vert \cdot \Vert_{\sigma}) $ be a Banach space with a monotone basis $ f^{\sigma}_{1}, f^{\sigma}_{2}, \dots $ and let these bases have the property that $ f^{\sigma}_{1}, f^{\sigma}_{2}, \dots, f^{\sigma}_{l} $ and $ f^{\varphi}_{1}, f^{\varphi}_{2}, \dots, f^{\varphi}_{l} $ are $ 1 $-equivalent whenever $ \sigma $ and $ \varphi $ have the same initial segment of length $ l $.

Let us consider the norm on $ c_{00}(T) $ defined by
\begin{equation} \label{E001}
\Vert x \Vert = \sup_{\sigma \in [T]} \Big\Vert \sum_{\eta \subset \sigma} x(\eta) f^{\sigma}_{|\eta|} \Big\Vert_{\sigma}
\end{equation}
and, for every unrooted subtree $ S \subset T $, the projection
\begin{equation} \label{E002}
P_{S}x = \mathbf{1}_{S} \cdot x.
\end{equation}
From the monotonicity of the bases $ f^{\sigma}_{n} $, we obtain
\begin{equation} \label{E003}
\Vert P_{S}x \Vert \leq \Vert x \Vert.
\end{equation}

Finally, we define $ E $ as  a completion of $ (c_{00}(T), \Vert \cdot \Vert) $. The members of the canonical basis of $ c_{00}(T) $ will be denoted by $ e_{\eta} $ (i.e., $ e_{\eta} = \mathbf{1}_{\{ \eta \}} $). We note that the system $ \{ e_{\eta} : \eta \in T \} $ is a basis of $ E $, which follows from the observation that the property $ x = \lim_{S \to T} P_{S}x $ extends from $ c_{00}(T) $ to its closure $ E $, due to the uniform boundedness of the projections $ P_{S} $. The basis is monotone in the sense of formula (\ref{E003}).

Since $ \{ e_{\eta} : \eta \in T \} $ is a basis of $ E $, we are allowed to consider all elements of $ E $ as systems $ x = \{ x(\eta) \}_{\eta \in T} $ of scalars. In this way, formulae \eqref{E001}, \eqref{E002} and \eqref{E003} remain valid for every $ x \in E $. We will denote the members of the corresponding dual system by $ e^{*}_{\eta} $ (i.e., $ e^{*}_{\eta}(x) = x(\eta) $).

For every $ \sigma \in [T] $, we further define spaces
\begin{equation} \label{E004}
\begin{aligned}
E_{\sigma} & = \{ x \in E : \eta \not\subset \sigma \Rightarrow x(\eta) = 0 \}, \\
E^{*}_{\sigma} & = \{ x^{*} \in E^{*} : \eta \not\subset \sigma \Rightarrow x^{*}(e_{\eta}) = 0 \}
\end{aligned}
\end{equation}
and a projection
\begin{equation} \label{E005}
P_{\sigma} = P_{\{ (\sigma_{1}), (\sigma_{1}, \sigma_{2}), \dots \}}.
\end{equation}
We also denote
\begin{equation} \label{E006}
\Phi = \bigcup_{\sigma \in [T]} B_{E_{\sigma}} \quad \textrm{and} \quad \Psi = \bigcup_{\sigma \in [T]} B_{E^{*}_{\sigma}}.
\end{equation}
\end{definition}

\begin{fact} \label{factisigma}
For every $ \sigma \in [T] $, the basis $ f^{\sigma}_{1}, f^{\sigma}_{2}, \dots $ of $ F_{\sigma} $ is $ 1 $-equivalent with the basis $ e_{(\sigma_{1})}, e_{(\sigma_{1}, \sigma_{2})}, \dots $ of $ E_{\sigma} $. In particular, the space $ E $ contains an $ 1 $-complemented isometric copy of $ F_{\sigma} $ for every $ \sigma \in [T] $.
\end{fact}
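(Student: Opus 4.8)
The plan is to evaluate the norm \eqref{E001} on a finitely supported vector of $E_\sigma$ and to recognise it as the $\|\cdot\|_\sigma$-norm of the corresponding vector of $F_\sigma$; the ``in particular'' assertion will then be immediate, since $P_\sigma$ is a norm-one projection of $E$ onto $E_\sigma$. So first I would fix $\sigma \in [T]$, write $\sigma|_k = (\sigma_1, \dots, \sigma_k)$ for brevity, and take an arbitrary finitely supported $x = \sum_{k=1}^n a_k e_{\sigma|_k} \in E_\sigma$. Given $\varphi \in [T]$, the assumption $x \in E_\sigma$ forces $x(\eta) = 0$ unless $\eta$ is a non-empty initial segment of $\sigma$, so only those $\sigma|_k$ that are also initial segments of $\varphi$ contribute to $\sum_{\eta \subset \varphi} x(\eta) f^\varphi_{|\eta|}$; these form an initial block $\sigma|_1, \dots, \sigma|_m$ with $0 \le m = m(\varphi) \le n$ and $m(\sigma) = n$, and hence $\sum_{\eta \subset \varphi} x(\eta) f^\varphi_{|\eta|} = \sum_{k=1}^m a_k f^\varphi_k$. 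Since $\sigma$ and $\varphi$ then agree on their first $m$ coordinates, the hypothesis in Definition~\ref{def:E} gives that $f^\sigma_1, \dots, f^\sigma_m$ and $f^\varphi_1, \dots, f^\varphi_m$ are $1$-equivalent, so $\|\sum_{k=1}^m a_k f^\varphi_k\|_\varphi = \|\sum_{k=1}^m a_k f^\sigma_k\|_\sigma \le \|\sum_{k=1}^n a_k f^\sigma_k\|_\sigma$, the last step by monotonicity of the basis $f^\sigma_1, f^\sigma_2, \dots$. Taking the supremum over $\varphi \in [T]$ gives $\|x\| \le \|\sum_{k=1}^n a_k f^\sigma_k\|_\sigma$, and the choice $\varphi = \sigma$ gives the reverse inequality, so $\|x\| = \|\sum_{k=1}^n a_k f^\sigma_k\|_\sigma$.

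This identity says precisely that the linear bijection $e_{\sigma|_k} \mapsto f^\sigma_k$ is norm-preserving on the linear span of $\{e_{\sigma|_k} : k \ge 1\}$, which is dense in $E_\sigma$ since $\{e_\eta : \eta \in T\}$ is a basis of $E$; therefore it extends to a surjective linear isometry of $E_\sigma$ onto $F_\sigma$, which is exactly the $1$-equivalence of the two bases. For the last claim, observe that $S = \{\sigma|_k : k \ge 1\}$ is an unrooted subtree of $T$ for which $P_S x = \mathbf{1}_S \cdot x$ ranges exactly over $E_\sigma$; by \eqref{E003} the projection $P_\sigma = P_S$ of \eqref{E005} then has norm at most $1$ (hence exactly $1$) and maps $E$ onto $E_\sigma$, and composing it with the isometry $E_\sigma \cong F_\sigma$ displays $F_\sigma$ as an $1$-complemented isometric subspace of $E$.

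I do not expect any genuine obstacle here; the computation is short once the bookkeeping is set up. The one point that deserves attention is that the supremum in \eqref{E001} runs over \emph{all} branches $\varphi$, not just $\varphi = \sigma$, so one must check that every other branch contributes only an initial-segment truncation $\sum_{k=1}^m a_k f^\sigma_k$ of the full vector $\sum_{k=1}^n a_k f^\sigma_k$, which by monotonicity has no larger $\|\cdot\|_\sigma$-norm, and that $\varphi = \sigma$ is itself admissible (it is, as $\sigma \in [T]$) so that the supremum is attained with value $\|\sum_{k=1}^n a_k f^\sigma_k\|_\sigma$.
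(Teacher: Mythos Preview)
Your proof is correct and follows essentially the same approach as the paper's: both fix a finitely supported $x\in E_\sigma$, observe that for any branch $\varphi$ only the common initial segment of $\sigma$ and $\varphi$ contributes, use the $1$-equivalence hypothesis to transfer to $\|\cdot\|_\sigma$, and invoke monotonicity to bound by the full sum, with equality at $\varphi=\sigma$. Your treatment of the ``in particular'' clause via $P_\sigma$ is also the same as the paper's, only spelled out in slightly more detail.
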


\begin{proof}
Let $ f = \sum_{n=1}^{\infty} r_{n} f^{\sigma}_{n} $ and $ x = \sum_{n=1}^{\infty} r_{n} e_{(\sigma_{1}, \dots, \sigma_{n})} $ where $ r_{n} \neq 0 $ for finitely many indices $ n $ only. We have
$$ \Big\Vert \sum_{\eta \subset \sigma} x(\eta) f^{\sigma}_{|\eta|} \Big\Vert_{\sigma} = \Big\Vert \sum_{n=1}^{\infty} r_{n} f^{\sigma}_{n} \Big\Vert_{\sigma} = \Vert f \Vert_{\sigma}, $$
and so it remains to check that
$$ \Big\Vert \sum_{\nu \subset \tau} x(\nu) f^{\tau}_{|\nu|} \Big\Vert_{\tau} \leq \Vert f \Vert_{\sigma} $$
for each $ \tau \in [T] \setminus \{ \sigma \} $. Let $ \eta $ be the longest segment such that $ \eta \subset \sigma $ and $ \eta \subset \tau $, and let $ l $ be its length. Then
$$ \Big\Vert \sum_{\nu \subset \tau} x(\nu) f^{\tau}_{|\nu|} \Big\Vert_{\tau} = \Big\Vert \sum_{n=1}^{l} r_{n} f^{\tau}_{n} \Big\Vert_{\tau} = \Big\Vert \sum_{n=1}^{l} r_{n} f^{\sigma}_{n} \Big\Vert_{\sigma} \leq \Vert f \Vert_{\sigma}. $$

The second part of the assertion follows from $ E_{\sigma} = P_{\sigma} E $.
\end{proof}

\begin{fact} \label{factesigma}
For $ x \in E $, we have
$$ \Vert P_{\sigma} x \Vert = \sup_{x^{*} \in B_{E^{*}_{\sigma}}} |x^{*}(x)|. $$
For $ x^{*} \in E^{*} $, we have
$$ \Vert P^{*}_{\sigma} x^{*} \Vert = \sup_{x \in B_{E_{\sigma}}} |x^{*}(x)|. $$
\end{fact}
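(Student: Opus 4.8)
The plan is to observe that both identities are instances of the standard duality between a norm-one projection and its adjoint, combined with the fact that $E_\sigma$ and $E^*_\sigma$ are, respectively, the range of $P_\sigma$ and the range of $P^*_\sigma$. First I would record the elementary facts that $P_\sigma$ is a norm-one projection with $\mathrm{ran}\,P_\sigma = E_\sigma$ (this is $E_\sigma = P_\sigma E$, already used in Fact \ref{factisigma}) and that, consequently, $P^*_\sigma$ is a norm-one projection on $E^*$ whose range is exactly $E^*_\sigma$. The inclusion $\mathrm{ran}\,P^*_\sigma \subset E^*_\sigma$ is immediate since $P^*_\sigma x^*(e_\eta) = x^*(P_\sigma e_\eta) = 0$ whenever $\eta \not\subset \sigma$; for the reverse inclusion one checks that $x^* \in E^*_\sigma$ is fixed by $P^*_\sigma$, because $x^*$ and $P^*_\sigma x^*$ agree on every $e_\eta$ (they agree trivially when $\eta \subset \sigma$ and both vanish otherwise) and $\{e_\eta\}$ is a basis of $E$, hence linearly dense.

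For the first identity, I would use that for any $x \in E$,
$$ \Vert P_\sigma x \Vert = \sup_{x^* \in B_{E^*}} |x^*(P_\sigma x)| = \sup_{x^* \in B_{E^*}} |(P^*_\sigma x^*)(x)|, $$
and as $x^*$ runs over $B_{E^*}$ the element $P^*_\sigma x^*$ runs over a subset of $B_{E^*_\sigma}$ (since $\Vert P^*_\sigma \Vert \leq 1$), while conversely every $y^* \in B_{E^*_\sigma}$ is of this form with $x^* = y^*$ because $P^*_\sigma$ fixes $E^*_\sigma$. Hence the supremum equals $\sup_{y^* \in B_{E^*_\sigma}} |y^*(x)|$, which is the claim. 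The second identity is symmetric: for $x^* \in E^*$,
$$ \Vert P^*_\sigma x^* \Vert = \sup_{x \in B_E} |(P^*_\sigma x^*)(x)| = \sup_{x \in B_E} |x^*(P_\sigma x)|, $$
and as $x$ ranges over $B_E$, $P_\sigma x$ ranges over $B_{E_\sigma}$ (onto, because $P_\sigma$ fixes $E_\sigma$, which here follows from $P_\sigma$ being a projection; onto the whole ball since $\Vert P_\sigma\Vert \le 1$), giving $\sup_{y \in B_{E_\sigma}} |x^*(y)|$.

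There is essentially no obstacle here; the only point requiring a line of justification is that $\mathrm{ran}\,P^*_\sigma = E^*_\sigma$, which is the bookkeeping with the dual system $\{e^*_\eta\}$ described above, and that the relevant suprema are attained over the full balls $B_{E^*_\sigma}$ and $B_{E_\sigma}$ rather than merely over their images under the projections — both of which follow immediately once one knows $P_\sigma$ and $P^*_\sigma$ are norm-one idempotents with the stated ranges. The argument does not use anything specific about the tree structure beyond formula \eqref{E003} and the definitions \eqref{E004}, \eqref{E005}.
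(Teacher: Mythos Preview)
Your proof is correct and follows essentially the same approach as the paper: the paper's one-line argument simply records the observations $P_\sigma B_E = B_{E_\sigma}$ and $P^*_\sigma B_{E^*} = B_{E^*_\sigma}$, and your proposal unpacks exactly these identities (including the verification that $\mathrm{ran}\,P^*_\sigma = E^*_\sigma$) and then deduces the two formulas by the obvious duality computation.
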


\begin{proof}
The fact follows directly from the observation that $ P_{\sigma} B_{E} = B_{E_{\sigma}} $ and $ P^{*}_{\sigma} B_{E^{*}} = B_{E^{*}_{\sigma}} $.
\end{proof}

\begin{lemma} \label{psi}
The set $ \Psi $ is compact in the weak$\,^{*}$ topology of $ E^{*} $ and its convex hull is $w^{*}$-dense in $ B_{E^{*}} $.
\end{lemma}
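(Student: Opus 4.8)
The plan is to handle the two assertions in turn, in each case reducing everything to Facts~\ref{factisigma} and~\ref{factesigma} and to the defining formula~\eqref{E001}. Note that $ [T] \neq \emptyset $, since $ T $ is a (nonempty) pruned tree, so $ \Psi \neq \emptyset $; this will be used tacitly.

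First, weak$ ^{*} $ compactness. Since $ \Psi \subset B_{E^{*}} $ and $ B_{E^{*}} $ is weak$ ^{*} $ compact by the Banach--Alaoglu theorem, it suffices to show that $ \Psi $ is weak$ ^{*} $ closed. For $ x^{*} \in E^{*} $ write $ \mathrm{supp}\, x^{*} = \{ \eta \in T : x^{*}(e_{\eta}) \neq 0 \} $. If $ x^{*} \in B_{E^{*}_{\sigma}} $ then $ \mathrm{supp}\, x^{*} \subset \{ \eta \in T : \eta \subset \sigma \} $, which is a $ \subset $-chain; conversely, every $ x^{*} \in B_{E^{*}} $ whose support is a $ \subset $-chain lies in some $ B_{E^{*}_{\sigma}} $ with $ \sigma \in [T] $. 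Indeed, if $ \mathrm{supp}\, x^{*} = \emptyset $ this is clear; if $ \mathrm{supp}\, x^{*} $ is finite, one extends its $ \subset $-largest element to an infinite branch of $ T $ (using that $ T $ is pruned); and if $ \mathrm{supp}\, x^{*} $ is infinite, then as a chain of finite sequences of unbounded lengths it determines a unique $ \sigma \in \Lambda^{\mathbb{N}} $ whose nonempty initial segments all lie in $ T $ (because $ T \cup \{ \emptyset \} $ is downward closed), hence $ \sigma \in [T] $ and $ \mathrm{supp}\, x^{*} \subset \{ \eta : \eta \subset \sigma \} $; in all three cases $ x^{*} \in B_{E^{*}_{\sigma}} $. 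Thus $ \Psi $ is precisely the set of $ x^{*} \in B_{E^{*}} $ whose support is a $ \subset $-chain, that is, the intersection of $ B_{E^{*}} $ with the sets $ \{ x^{*} : x^{*}(e_{\eta}) = 0 \} \cup \{ x^{*} : x^{*}(e_{\nu}) = 0 \} $ taken over all $ \subset $-incomparable pairs $ \eta, \nu \in T $. Each $ e_{\eta} $ belongs to $ E $, so each of these sets is weak$ ^{*} $ closed, and therefore so is $ \Psi $.

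Second, the density of $ \mathrm{conv}(\Psi) $. The point is that $ \Psi $ has the same support functional as $ B_{E^{*}} $: for $ x \in E $, using Fact~\ref{factesigma}, then Fact~\ref{factisigma} (which identifies $ E_{\sigma} $ isometrically with $ F_{\sigma} $), and finally~\eqref{E001},
\begin{align*}
\sup_{x^{*} \in \Psi} |x^{*}(x)|
 &= \sup_{\sigma \in [T]} \; \sup_{x^{*} \in B_{E^{*}_{\sigma}}} |x^{*}(x)|
  = \sup_{\sigma \in [T]} \Vert P_{\sigma} x \Vert \\
 &= \sup_{\sigma \in [T]} \Big\Vert \sum_{\eta \subset \sigma} x(\eta) f^{\sigma}_{|\eta|} \Big\Vert_{\sigma}
  = \Vert x \Vert .
\end{align*}
Equivalently, the polar of $ \Psi $ in the duality $ \langle E, E^{*} \rangle $ is exactly $ B_{E} $. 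Since $ \Psi $ is balanced, the bipolar theorem gives $ \overline{\mathrm{conv}}^{\,w^{*}}(\Psi) = \Psi^{\circ\circ} = (B_{E})^{\circ} = B_{E^{*}} $, which is the assertion.

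A word on where the work lies: both parts are essentially formal once Facts~\ref{factisigma} and~\ref{factesigma} are available, and the only genuinely combinatorial step -- the one place where the structural hypotheses on $ T $ are used -- is the converse inclusion in the first part, namely producing an honest infinite branch $ \sigma \in [T] $ carrying a functional whose support is merely known to be a $ \subset $-chain. Prunedness of $ T $ takes care of finite chains and downward closedness of $ T \cup \{ \emptyset \} $ of infinite ones.
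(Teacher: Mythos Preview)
Your proof is correct and follows essentially the same route as the paper's: the same set-theoretic description of $\Psi$ as $B_{E^{*}}$ minus the functionals supported on some incomparable pair, and the same computation $\sup_{x^{*}\in\Psi}|x^{*}(x)|=\Vert x\Vert$ via Facts~\ref{factisigma}, \ref{factesigma} and~\eqref{E001}, followed by Hahn--Banach (which you phrase via the bipolar theorem). The main difference is that you spell out in detail why every $x^{*}\in B_{E^{*}}$ with chain support actually lies in some $B_{E^{*}_{\sigma}}$ with $\sigma\in[T]$, a step the paper leaves implicit.
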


\begin{proof}
To show that $ \Psi $ is $w^{*}$-compact, we just write
$$ \Psi = B_{E^{*}} \setminus \bigcup \Big\{ x^{*} \in E^{*} : x^{*}(e_{\eta}) \neq 0 \, \& \, x^{*}(e_{\nu}) \neq 0 \Big\} $$
where the union is taken over all couples $ \eta, \nu $ of incomparable segments in $ T $. Using (\ref{E001}) in combination with Facts \ref{factisigma} and \ref{factesigma}, we obtain for $ x \in E $ that
$$ \Vert x \Vert = \sup_{\sigma \in [T]} \Vert P_{\sigma} x \Vert = \sup_{\sigma \in [T]} \sup_{x^{*} \in B_{E^{*}_{\sigma}}} |x^{*}(x)| = \sup_{x^{*} \in \Psi} |x^{*}(x)|. $$
Now, to prove that the convex hull of $ \Psi $ is $w^{*}$-dense in $ B_{E^{*}} $, it is sufficient to apply the Hahn-Banach theorem.
\end{proof}

\begin{proposition} \label{shrink}
If the basis $ f^{\sigma}_{1}, f^{\sigma}_{2}, \dots $ is shrinking for every $ \sigma \in [T] $, then the basis $ \{ e_{\eta} : \eta \in T \} $ is also shrinking.
\end{proposition}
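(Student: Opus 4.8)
The plan is to prove the equivalent statement that the biorthogonal functionals $e^*_\eta$ have norm-dense linear span in $E^*$; for the monotone basis $\{e_\eta\}$ this is exactly the assertion that it is shrinking (for any linear enumeration of $T$ compatible with the tree order).

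The first, routine half uses Facts~\ref{factisigma} and~\ref{factesigma} to locate $\Psi$ inside this span. Fix $\sigma\in[T]$. By Fact~\ref{factisigma} the map $f^\sigma_n\mapsto e_{(\sigma_1,\dots,\sigma_n)}$ is an isometry of $F_\sigma$ onto $E_\sigma$; since $E^*_\sigma=P^*_\sigma E^*$ and $P_\sigma B_E=B_{E_\sigma}$ (proof of Fact~\ref{factesigma}), the restriction map identifies $E^*_\sigma$ isometrically with $F^*_\sigma$, carrying $e^*_{(\sigma_1,\dots,\sigma_n)}$ to the $n$-th coefficient functional of $(f^\sigma_n)$. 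As $(f^\sigma_n)$ is shrinking these coefficient functionals span $F^*_\sigma$, so $E^*_\sigma=\overline{\operatorname{span}}^{\|\cdot\|}\{e^*_{(\sigma_1,\dots,\sigma_n)}:n\ge 1\}$. Hence $\Psi=\bigcup_{\sigma\in[T]}B_{E^*_\sigma}$ is contained in the norm-closed span $Z:=\overline{\operatorname{span}}^{\|\cdot\|}\{e^*_\eta:\eta\in T\}$, and since each $e^*_\eta$, after normalization, lies in some $B_{E^*_\sigma}$, in fact $Z=\overline{\operatorname{span}}^{\|\cdot\|}\Psi$. By Lemma~\ref{psi} the $w^*$-closed linear span of $\Psi$ is all of $E^*$, so it remains to prove that the norm-closed span of $\Psi$ already equals $E^*$.

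For this I would argue by contradiction via block sequences. If $\{e_\eta\}$ is not shrinking there are a bounded block sequence $(y_k)$ with respect to a compatible enumeration — so $\operatorname{supp}(y_{k+1})$ is disjoint from the downward closure of $\operatorname{supp}(y_1)\cup\dots\cup\operatorname{supp}(y_k)$ — together with $x^*\in E^*$ and $\varepsilon>0$ with $\|y_k\|\le 1$ and $x^*(y_k)>\varepsilon$ for all $k$. The key point is that this block structure passes to every branch: for $\sigma\in[T]$, along $\sigma$ the supports of the $y_k$ occur at strictly increasing lengths, so the nonzero terms of $(P_\sigma y_k)_k$ form a bounded block sequence of the basis of $E_\sigma\cong F_\sigma$, hence a weakly null sequence; since every $z^*\in E^*_\sigma$ vanishes on $\overline{\operatorname{span}}\{e_\eta:\eta\not\subset\sigma\}$ we get $z^*(y_k)=z^*(P_\sigma y_k)\to 0$ for each fixed $z^*\in\Psi$, and, by taking finite convex combinations, for each fixed $z^*\in\operatorname{conv}\Psi$. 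To finish one must upgrade this to $x^*(y_k)\to 0$, contradicting $x^*(y_k)>\varepsilon$; here the $\ell_\infty$-type formula~\eqref{E001} must be exploited — a norm-one element of $E$ restricts to a norm-$\le1$ element on every branch, so distinct branches ``decouple'' and a bounded block sequence whose supports recede up the tree cannot pair with the \emph{fixed} functional $x^*$ across more than a controlled family of branches. Concretely, for each $m$ one approximates $x^*$ in the $w^*$-topology on $\{y_1,\dots,y_m\}$ by $\sum_i\lambda_i z^*_i\in\operatorname{conv}\Psi$ with $z^*_i\in B_{E^*_{\sigma_i}}$ (Lemma~\ref{psi}), and combines the weak nullity of the $(P_{\sigma_i}y_k)_k$ inside the fixed spaces $F_{\sigma_i}$ with the $\ell_\infty$-sup control of $\|\sum_k P_{\sigma_i}y_k\|$.

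I expect the main obstacle to be precisely this last globalization step: the branches $\sigma_i$ and weights $\lambda_i$ furnished by Lemma~\ref{psi} depend on how many $y_k$'s one tests against, so branch-by-branch weak nullity does not immediately globalize to $x^*(y_k)\to 0$; one really has to use the structure of the $\ell_\infty$-sum — that passing to a branch never increases the norm — to prevent a bounded block sequence from carrying non-negligible mass over infinitely many branches at once, and to absorb the $m$-dependence of the approximating convex combinations. Everything else (the reduction in the first paragraph and the placement of $\Psi$ inside $Z$) is straightforward from Facts~\ref{factisigma}, \ref{factesigma} and Lemma~\ref{psi}.
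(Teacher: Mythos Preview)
Your first paragraph is correct and essentially coincides with the paper's own opening move: each $E^*_\sigma$ is the norm-closed span of the corresponding $e^*_\eta$'s, so $\Psi\subset Z:=\overline{\operatorname{span}}^{\|\cdot\|}\{e^*_\eta:\eta\in T\}$.

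The gap is exactly where you locate it, and your sketch does not close it. Approximating $x^*$ in the $w^*$-topology on $\{y_1,\dots,y_m\}$ by a convex combination $\sum_i\lambda_i z^*_i\in\operatorname{conv}\Psi$ gives you nothing beyond $|x^*(y_j)-\sum_i\lambda_i z^*_i(y_j)|<\delta$ for $j\le m$; the branches $\sigma_i$ and weights $\lambda_i$ change with $m$, and the weak nullity of $(P_{\sigma_i}y_k)_k$ only kicks in for $k$ large relative to the fixed family $\{\sigma_i\}$. There is no mechanism in your outline that prevents the approximating convex combination at stage $m$ from placing most of its mass on branches along which $y_m$ itself has large projection. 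The vague appeal to ``$\ell_\infty$-sup control of $\|\sum_k P_{\sigma_i}y_k\|$'' does not help: nothing bounds how many branches a single $y_k$ can meet, nor how the weights $\lambda_i$ distribute across them.

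The paper avoids this difficulty entirely by replacing the $m$-dependent finite convex combinations with a \emph{single} integral representation. Since $\Psi$ is $w^*$-compact and $B_{E^*}=\overline{\operatorname{co}}^{\,w^*}\Psi$ (Lemma~\ref{psi}), every $y^*\in B_{E^*}$ is the barycenter of a probability measure $\mu$ on $\Psi$:
\[
y^*=\int_\Psi x^*\,d\mu(x^*).
\]
Fixing an increasing sequence of finite subtrees $T_n\uparrow T$, one has $\|x^*-P^*_{T_n}x^*\|\to 0$ for every $x^*\in\Psi$ (this is your first paragraph), the integrand is bounded by $2$, and dominated convergence gives $\|y^*-P^*_{T_n}y^*\|\to 0$. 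The point is that $\mu$ is chosen once and for all, independently of any test vectors, so the ``$m$-dependence'' problem never arises. If you want to rescue your block-sequence route, this is the missing idea: pass from finite convex combinations to a fixed probability measure on $\Psi$ and integrate.
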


\begin{proof}
Let us fix an increasing sequence $ T_{1}, T_{2}, \dots $ of finite unrooted trees with $ \bigcup_{n=1}^{\infty} T_{n} = T $. We show first that
$$ x^{*} \in \Psi \quad \Rightarrow \quad P^{*}_{T_{n}} x^{*} \to x^{*}. $$
Given a $ \sigma \in [T] $, we check the implication for the elements of $ B_{E^{*}_{\sigma}} $. By Fact~\ref{factisigma}, the sequence $ e_{(\sigma_{1})}, e_{(\sigma_{1}, \sigma_{2})}, \dots $ is a shrinking basis of $ E_{\sigma} $. By Fact~\ref{factesigma}, the elements of $ E^{*}_{\sigma} $ satisfy
$$ \Vert x^{*} \Vert = \sup_{x \in B_{E_{\sigma}}} |x^{*}(x)|, \quad x^{*} \in E^{*}_{\sigma}. $$
Hence $ E^{*}_{\sigma} $ is (isometric to) the dual of $ E_{\sigma} $ indeed. The dual sequence $ e^{*}_{(\sigma_{1})}, e^{*}_{(\sigma_{1}, \sigma_{2})}, \dots $ is a basis of $ E^{*}_{\sigma} $. It follows that $ P^{*}_{T_{n}} x^{*} \to x^{*} $ for each $ x^{*} \in E^{*}_{\sigma} $.

Now, let $ y^{*} \in B_{E^{*}} $. By Lemma~\ref{psi}, there exists a probability measure $ \mu $ on $ \Psi $ such that
$$ y^{*} = \int_{\Psi} x^{*} \, d\mu(x^{*}). $$
Therefore,
$$
\begin{aligned}
\lim _{n \to \infty} \Vert y^{*} - P^{*}_{T_{n}} y^{*} \Vert & = \lim _{n \to \infty} \Big\Vert \int_{\Psi} (x^{*} - P^{*}_{T_{n}} x^{*}) \, d\mu(x^{*}) \Big\Vert \\
 & \leq \lim _{n \to \infty} \int_{\Psi} \Vert x^{*} - P^{*}_{T_{n}} x^{*} \Vert \, d\mu(x^{*}) \\
 & = \int_{\Psi} \lim _{n \to \infty} \Vert x^{*} - P^{*}_{T_{n}} x^{*} \Vert \, d\mu(x^{*}) = \int_{\Psi} 0 \, d\mu(x^{*}) = 0.
\end{aligned}
$$
This proves that $ y^{*} $ belongs to the closed linear span of the functionals $ e^{*}_{\eta}, \eta \in T $.
\end{proof}

\begin{lemma} \label{phi}
If the space $ F_{\sigma} $ is reflexive for every $ \sigma \in [T] $, then the set $ \Phi $ is compact in the weak topology of $ E $.
\end{lemma}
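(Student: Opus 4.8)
The plan is to mirror the proof of Lemma~\ref{psi} one level up, in the bidual $ E^{**} $. First I would invoke Proposition~\ref{shrink}: since every $ F_\sigma $ is reflexive, each of its bases $ f^\sigma_1, f^\sigma_2, \dots $ is shrinking, so $ \{ e_\eta : \eta \in T \} $ is a shrinking basis of $ E $. Consequently each coordinate functional $ e^*_\eta $ lies in $ E^* $, the induced maps $ z \mapsto z(\eta) := e^*_\eta(z) $ are $ w^* $-continuous on $ E^{**} $ and separate its points, and $ E^{**} $ is classically identified with the space of scalar systems $ z = \{ z(\eta) \}_{\eta \in T} $ satisfying $ \sup_S \Vert \sum_{\eta \in S} z(\eta) e_\eta \Vert < \infty $ (supremum over finite subtrees $ S \subset T $), with $ E $ corresponding to those $ z $ for which $ \sum_{\eta \in S} z(\eta) e_\eta $ converges as $ S \to T $. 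Taking adjoints in \eqref{E003}, $ \Vert \mathbf{1}_S \cdot z \Vert \leq \Vert z \Vert_{E^{**}} $ for every such $ z $ and every finite subtree $ S $.

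Following the proof of Lemma~\ref{psi} almost verbatim, I would put
$$ \widetilde{\Phi} = B_{E^{**}} \setminus \bigcup \big\{ z \in E^{**} : z(\eta) \neq 0 \, \& \, z(\nu) \neq 0 \big\}, $$
the union over incomparable couples $ \eta, \nu \in T $. Each set in the union is $ w^* $-open, so $ \widetilde{\Phi} $ is a $ w^* $-closed, hence $ w^* $-compact, subset of the ball $ B_{E^{**}} $; it is precisely the set of $ z \in B_{E^{**}} $ whose support $ \{ \eta : z(\eta) \neq 0 \} $ is a chain in $ T $. Since every $ x \in B_{E_\sigma} $ is supported on the branch $ \sigma $, we have $ \Phi \subset \widetilde{\Phi} $.

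The crux is the reverse inclusion $ \widetilde{\Phi} \subset \Phi $; granting it, $ \Phi = \widetilde{\Phi} $ is $ w^* $-compact in $ E^{**} $ and sits inside $ E $, and since the $ w^* $-topology of $ E^{**} $ induces the weak topology of $ E $ on $ E $, the set $ \Phi $ is weakly compact, which is the assertion. So let $ z \in \widetilde{\Phi} $. Its support is a chain, and a chain in the pruned tree $ T $ is contained in $ \{ \eta : \eta \subset \sigma \} $ for some $ \sigma \in [T] $: if the chain is finite, extend its longest element to an infinite branch using prunedness; if it is infinite, its union is already such a branch, all of whose initial segments lie in $ T $ because $ T $ is a tree. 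Thus $ z $ is supported on $ \{ (\sigma_1), (\sigma_1, \sigma_2), \dots \} $, and for each $ m $ the finitely supported vector $ \sum_{k=1}^{m} z((\sigma_1, \dots, \sigma_k)) e_{(\sigma_1, \dots, \sigma_k)} $ has $ E $-norm $ \leq \Vert z \Vert_{E^{**}} \leq 1 $, which by Fact~\ref{factisigma} means $ \Vert \sum_{k=1}^{m} z((\sigma_1, \dots, \sigma_k)) f^\sigma_k \Vert_\sigma \leq 1 $. Because $ F_\sigma $ is reflexive, its basis is boundedly complete, so $ \sum_{k} z((\sigma_1, \dots, \sigma_k)) f^\sigma_k $ converges in $ F_\sigma $; carrying it back through the isometry of Fact~\ref{factisigma} produces a vector $ w \in B_{E_\sigma} \subset E $ with $ w(\eta) = z(\eta) $ for all $ \eta \in T $. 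As $ \{ e^*_\eta \} $ separates $ E^{**} $, we get $ z = w \in B_{E_\sigma} \subset \Phi $, as needed.

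The point requiring care is the bookkeeping around the passage to $ E^{**} $ — identifying which coordinate systems lie in $ E $ and checking that bidual elements of $ \Phi $ still have chain support — but given Proposition~\ref{shrink} this is routine; the genuinely essential ingredients are that a chain in a pruned tree lives on a single branch and that each reflexive slice $ F_\sigma $ has a boundedly complete basis, which together pull every bidual cluster point of $ \Phi $ back into some $ B_{E_\sigma} $. (Alternatively, the same reasoning can be run through the Eberlein--\v{S}mulian theorem: from a sequence in $ \Phi $ extract a coordinatewise convergent subsequence and identify its weak limit exactly as above.)
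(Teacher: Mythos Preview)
Your proof is correct. The essential ingredients---Proposition~\ref{shrink} to get a shrinking basis, the observation that an element with chain support lives on a single branch of the pruned tree, and reflexivity of $F_\sigma$ to force the limit back into $B_{E_\sigma}$---are exactly those of the paper. The only difference is the packaging: the paper argues sequentially, taking a sequence in $\Phi$, extracting a coordinatewise convergent subsequence by a diagonal argument, and then identifying the weak limit via reflexivity of $E_\sigma$ (this is precisely the alternative you sketch in your last parenthetical). You instead work topologically in $E^{**}$, showing $\Phi$ coincides with the $w^*$-closed set $\widetilde{\Phi}$ of chain-supported bidual elements and invoking bounded completeness of the basis of $F_\sigma$. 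Your route is arguably cleaner in that it avoids appealing to Eberlein--\v{S}mulian or to metrizability, at the cost of having to set up the $E^{**}$ bookkeeping; the paper's route is more hands-on but requires no bidual machinery. Neither buys anything the other does not.
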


\begin{proof}
Let $ x_{1}, x_{2}, \dots $ be a sequence in $ \Phi $. We want to find a subsequence $ x_{n_{k}} $ which converges weakly to an $ x \in \Phi $. By Proposition~\ref{shrink}, it is sufficient to check that
$$ x_{n_{k}}(\eta) \to x(\eta), \quad \eta \in T. $$
Using a diagonal argument, we choose the subsequence $ x_{n_{k}} $ so that
$$ x_{n_{k}}(\eta) \to u(\eta), \quad \eta \in T, $$
for a system of scalars $ u = \{ u(\eta) \}_{\eta \in T} $. It remains to show that this system forms the coordinates of an $ x \in \Phi $.

First, we realize that there is a $ \sigma \in [T] $ such that $ u $ is supported by the branch $ \{ (\sigma_{1}), (\sigma_{1}, \sigma_{2}), \dots \} $. Indeed, if $ u(\eta) \neq 0 \neq u(\nu) $ for some incomparable $ \eta, \nu \in T $, then $ x_{n_{k}}(\eta) \neq 0 \neq x_{n_{k}}(\nu) $ for a large enough $ k $, which is not allowed by the definition of $ \Phi $.

By Fact \ref{factisigma} and our assumption, $ E_{\sigma} $ is reflexive. A subsequence of $ P_{\sigma} x_{n_{k}} $ converges weakly to an $ x \in B_{E_{\sigma}} $, and this limit satisfies $ x(\eta) = u(\eta) $ for every $ \eta \in T $.
\end{proof}

\begin{remark} \label{remschur}
(a) If $ S \subset T $ is a well-founded unrooted subtree, then the subspace
\begin{equation} \label{schurmapping}
H(S) = \overline{\mathrm{span}} \{ e_{\eta}^{*} : \eta \in S \}
\end{equation}
of $ E^{*} $ has the Schur property. Let us assume that $ H(S) $ is not Schur and denote $ H_{\nu}(S) = \overline{\mathrm{span}} \{ e_{\eta}^{*} : \eta \in S \& \nu \subset \eta \} $ for $ \nu \in T \cup \{ \emptyset \} $. It is sufficient to prove that
$$ H_{\nu}(S) \textrm{ is not Schur} \quad \Rightarrow \quad H_{\nu^{\wedge} n}(S) \textrm{ is not Schur for some $ n $}, $$
as this allows us to find an infinite branch of $ S $. One can show that
$$ \Big\Vert \sum_{n=1}^{m} x^{*}_{n} \Big\Vert \geq \frac{1}{2} \sum_{n=1}^{m} \Vert x^{*}_{n} \Vert, \quad x^{*}_{n} \in H_{\nu^{\wedge} n}(S), \, n = 1, 2, \dots, m. $$
Therefore, a hyperplane of $ H_{\nu}(S) $ (or $ H_{\nu}(S) $ itself when $ \nu = \emptyset $) is isomorphic to the $ \ell_{1} $-sum of $ H_{\nu^{\wedge} 1}(S), H_{\nu^{\wedge} 2}(S), \dots $, and the implication follows.

(b) If a separable Banach space $ X $ contains an isometric copy of every separable Schur space, then it contains an isometric copy of every separable Banach space. To show this, we follow a method of B.~Bossard \cite{bossard2}. Let $ x_{1}, x_{2}, \dots $ be a monotone basis of $ C([0, 1]) $ (see e.g. \cite[p. 34]{diestel}) and $ f_{1}, f_{2}, \dots $ be the dual basic sequence in $ C([0, 1])^{*} $. Let $ T = \mathbb{N}^{< \mathbb{N}} \setminus \{ \emptyset \} $, $ f^{\sigma}_{n} = f_{n} $ and $ F_{\sigma} = \overline{\mathrm{span}} \{ f_{n} : n \in \mathbb{N} \} $ for every $ \sigma \in \mathbb{N}^{\mathbb{N}} $. In this setting, let $ H(S) $ be given by \eqref{schurmapping}. Let $ \mathrm{Tr} $ be the subspace of $ 2^{T} $ consisting of all unrooted trees on $ \mathbb{N} $ and let $ \mathrm{WF} $ be the set of all well-founded $ S \in \mathrm{Tr} $. Consider the set
$$ \mathcal{A} = \{ S \in \mathrm{Tr} : \textrm{$ X $ contains an isometric copy of $ H(S) $} \}. $$
Then $ \mathcal{A} $ is analytic (see \cite[Lemma 7 and Lemma 8]{godefroy}) and it contains $ \mathrm{WF} $, due to our assumption. Since $ \mathrm{WF} $ is not analytic (see e.g. \cite[(27.1)]{kechris}), there is an $ S \in \mathcal{A} \setminus \mathrm{WF} $. So, $ X $ contains an isometric copy of $ H(S) $ for some $ S \notin \mathrm{WF} $, which contains an isometric copy of $ C([0, 1]) $.

(c) Assume that, for every $ \sigma \in [T] $, the space $ \overline{\mathrm{span}} \{ f^{*}_{n} : n \in \mathbb{N} \} $ has the Schur property, where $ f^{*}_{1}, f^{*}_{2}, \dots $ is the dual basic sequence of the basis $ f^{\sigma}_{1}, f^{\sigma}_{2}, \dots $. We do not know whether $ H = \overline{\mathrm{span}} \{ e_{\eta}^{*} : \eta \in T \} $ has necessarily the Schur property in such a case. It is possible to show that $ B_{H} = \overline{\mathrm{co}} (H \cap \Psi) $ and that every weakly convergent sequence in $ H \cap \Psi $ is convergent, but this does not seem to be sufficient for $ H $ to be Schur.
\end{remark}

\section{The interpolation method}

The aim of this section is to provide a reflexive variant of the tree space from Definition~\ref{def:E}. As well as the authors of \cite{argyrosdodos}, we apply the Davis-Figiel-Johnson-Pe\l czy\'nski interpolation method.

\begin{definition}[\cite{dfjp}]
Let $ W $ be a bounded, closed, convex and symmetric subset of a Banach space $ X $. For each $ n \in \mathbb{N} $, let $ \Vert \cdot \Vert_{n} $ be the equivalent norm given by
$$ B_{(X, \Vert \cdot \Vert_{n})} = \overline{2^{n}W + 2^{-n}B_{X}}. $$
The \emph{$ 2 $-interpolation space of the pair $ (X, W) $} is defined as the space $ (Y, \opnorm{\cdot}) $ where
$$ \opnorm{x} = \Big( \sum_{n=1}^{\infty} \Vert x \Vert_{n}^{2} \Big)^{1/2}, \quad x \in X, $$
and
$$ Y = \{ x \in X : \opnorm{x} < \infty \}. $$
\end{definition}

\begin{lemma} \label{interpproj}
Let $ P : X \to X $ be a projection such that $ \Vert P \Vert \leq 1 $ and $ PW \subset W $. Then we have
$$ \opnorm{Px} \leq \opnorm{x}, \quad x \in X. $$
If, moreover, $ PW = PB_{X} $, then there is a constant $ c > 0 $ such that
$$ \opnorm{x} = c \Vert x \Vert, \quad x \in PX. $$
In particular, $ Y $ contains an $ 1 $-complemented isometric copy of $ PX $.
\end{lemma}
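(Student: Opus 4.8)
The plan is to reduce everything to how $P$ acts on the generating sets $2^{n}W+2^{-n}B_{X}$ of the interpolation norms $\Vert\cdot\Vert_{n}$. For the first inequality I would check that $P$ maps each such set into itself: if $w\in W$ and $b\in B_{X}$, then $Pw\in PW\subset W$ and $\Vert Pb\Vert\le\Vert P\Vert\le 1$, so $2^{n}Pw+2^{-n}Pb\in 2^{n}W+2^{-n}B_{X}$; since $P$ is continuous it then maps the closure $B_{(X,\Vert\cdot\Vert_{n})}$ into itself as well, hence $\Vert Px\Vert_{n}\le\Vert x\Vert_{n}$ for every $n$ and every $x\in X$. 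Squaring and summing gives $\opnorm{Px}^{2}=\sum_{n}\Vert Px\Vert_{n}^{2}\le\sum_{n}\Vert x\Vert_{n}^{2}=\opnorm{x}^{2}$.

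For the second part, under the extra hypothesis $PW=PB_{X}$, I would first record that $\Vert P\Vert\le 1$ forces $PB_{X}=B_{X}\cap PX$, which is the unit ball $B_{PX}$ of the closed (hence complete) subspace $PX$; in particular it is closed, convex and symmetric. Using the elementary identity $aC+bC=(a+b)C$ for a convex symmetric $C$ and $a,b\ge 0$, this gives
$$ P\big(2^{n}W+2^{-n}B_{X}\big)=2^{n}PW+2^{-n}PB_{X}=(2^{n}+2^{-n})\,B_{PX}. $$
Passing to the closure (with $P$ continuous and $(2^{n}+2^{-n})B_{PX}$ already closed) yields $PB_{(X,\Vert\cdot\Vert_{n})}=(2^{n}+2^{-n})B_{PX}$, and intersecting with $PX$ — using the first part, which says $P$ maps the ball of $\Vert\cdot\Vert_{n}$ into itself — gives $PX\cap B_{(X,\Vert\cdot\Vert_{n})}=(2^{n}+2^{-n})B_{PX}$. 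Equivalently $\Vert x\Vert_{n}=(2^{n}+2^{-n})^{-1}\Vert x\Vert$ for $x\in PX$, so for such $x$
$$ \opnorm{x}^{2}=\Vert x\Vert^{2}\sum_{n=1}^{\infty}(2^{n}+2^{-n})^{-2}=c^{2}\Vert x\Vert^{2},\qquad c:=\Big(\sum_{n=1}^{\infty}(2^{n}+2^{-n})^{-2}\Big)^{1/2}\in(0,\infty), $$
the series converging since $(2^{n}+2^{-n})^{-2}\le 4^{-n}$.

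Finally, $\opnorm{x}=c\Vert x\Vert<\infty$ on $PX$ shows $PX\subset Y$, and the map $x\mapsto c^{-1}x$ is a linear isometry of $(PX,\Vert\cdot\Vert)$ onto the subspace $PX$ of $(Y,\opnorm{\cdot})$. Moreover, by the first part $P$ maps $Y$ into $Y$ with $\opnorm{Px}\le\opnorm{x}$, and it is a projection with range $PX\subset Y$; hence this copy of $PX$ is $1$-complemented in $Y$, which is the last assertion. The only point I expect to be genuinely delicate is making the middle computation an \emph{equality} rather than merely an inclusion: one must ensure $PB_{(X,\Vert\cdot\Vert_{n})}$ equals $(2^{n}+2^{-n})B_{PX}$ exactly, which is where the closedness of $PB_{X}=B_{PX}$ (from $\Vert P\Vert\le 1$ together with completeness of $PX$) and the commutation of $P$ with closures are really used; the remainder is routine manipulation of Minkowski functionals and of the $\ell_{2}$-sum.
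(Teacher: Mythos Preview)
Your argument is correct and follows essentially the same route as the paper: both establish $\Vert x\Vert_{n}=(2^{n}+2^{-n})^{-1}\Vert x\Vert$ on $PX$ by using $PW=PB_{X}\subset W\cap B_{X}$ for one inequality and applying $P$ to a decomposition $2^{n}w+2^{-n}y$ for the other. The only differences are cosmetic---you prove the first inequality directly (the paper merely cites \cite{dfjp}), phrase the second part via Minkowski sums and closures rather than elementwise, and record the explicit constant $c=\big(\sum_{n\ge 1}(2^{n}+2^{-n})^{-2}\big)^{1/2}$.
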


\begin{proof}
The inequality $ \opnorm{Px} \leq \opnorm{x} $ (which can be proven quite easily actually) follows from \cite[p. 316, Lemma~1(viii)]{dfjp}. To provide a suitable constant $ c > 0 $, it is sufficient to show that
$$ \Vert x \Vert_{n} = \frac{1}{2^{n}+2^{-n}} \Vert x \Vert, \quad x \in PX. $$
Let $ x \in PX $ be given. We will assume that $ \Vert x \Vert = 1 $. Since $ x \in B_{X} $ and $ x = Px \in PB_{X} = PW \subset W $, we have $ (2^{n}+2^{-n}) x \in B_{(X, \Vert \cdot \Vert_{n})} $. Therefore, $ (2^{n}+2^{-n}) \Vert x \Vert_{n} \leq 1 = \Vert x \Vert $.

Let $ 0 < \theta < 1/\Vert x \Vert_{n} $ be chosen arbitrarily. We have $ \Vert \theta x \Vert_{n} < 1 $, and so $ \theta x $ belongs to $ 2^{n}W + 2^{-n}B_{X} $. We obtain $ \theta x = 2^{n} w + 2^{-n} y $ for some $ w \in W $ and $ y \in B_{X} $. Since $ Pw \in PW = PB_{X} \subset B_{X} $, we have $ \theta x = \theta Px = 2^{n} Pw + 2^{-n} Py $ and $ \theta \Vert x \Vert \leq 2^{n} + 2^{-n} $. Due to the choice of $ \theta $, the inequality $ \Vert x \Vert \leq (2^{n} + 2^{-n}) \Vert x \Vert_{n} $ follows.
\end{proof}

\begin{definition}  \label{def:A}
Adopting the notation from Definition~\ref{def:E}, we define $ A $ as the $ 2 $-interpolation space of the pair $ (E, \overline{\mathrm{co}} \, \Phi) $.
\end{definition}

\begin{fact} \label{factA1}
The system $ \{ e_{\eta} : \eta \in T \} $ is a monotone basis of $ A $.
\end{fact}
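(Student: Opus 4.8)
The plan is to show that $\{e_\eta : \eta \in T\}$ is a basis of $A$ and that the associated partial sum projections have norm $\le 1$, working from the already-established structure on $E$. The first point is essentially inherited: $c_{00}(T)$ sits in $A$ (its members have finite $\opnorm{\cdot}$ since each $\Vert\cdot\Vert_n$ is equivalent to $\Vert\cdot\Vert$ and only finitely many coordinates are nonzero), and $A$ is by definition the completion of $c_{00}(T)$ under $\opnorm{\cdot}$. So I would first record that $c_{00}(T)$ is $\opnorm{\cdot}$-dense in $A$, hence it suffices to exhibit uniformly bounded coordinate projections and check convergence $x = \lim_{S\to T} P_S x$ on a dense set, exactly as in Definition~\ref{def:E}.

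The core step is the monotonicity, and here I would invoke Lemma~\ref{interpproj}. For any unrooted subtree $S \subset T$, the coordinate projection $P_S x = \mathbf{1}_S \cdot x$ on $E$ satisfies $\Vert P_S\Vert \le 1$ by \eqref{E003}. The hypothesis of Lemma~\ref{interpproj} that remains to be verified is $P_S W \subset W$ where $W = \overline{\mathrm{co}}\,\Phi$. Since $P_S$ is $w^*$-to-$w^*$ (equivalently norm-to-norm, being a bounded coordinate projection that is $w$-continuous) continuous and linear, it suffices to check $P_S \Phi \subset \overline{\mathrm{co}}\,\Phi$; and in fact I expect $P_S \Phi \subset \Phi$ directly: if $x \in B_{E_\sigma}$ then $P_S x$ is still supported on the branch determined by $\sigma$ and, by monotonicity \eqref{E003}, $\Vert P_S x\Vert \le 1$, so $P_S x \in B_{E_\sigma} \subset \Phi$. (One should be slightly careful when $S$ omits an initial segment of the branch, but then $P_S x$ is supported on a tail of the branch, which by the $1$-equivalence hypothesis on the bases still lies in some $B_{E_\tau}$ — or one simply notes $P_S x$ lies in the closed span of $\{e_\nu : \nu \subset \sigma\}$ with norm $\le 1$, and that ball is contained in $\Phi$ when the branch through $\sigma$ passes through $S$; in the general case $P_S x$ is a finite or infinite coordinatewise-truncated vector still of norm $\le 1$ supported on a single branch). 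Granting $P_S W \subset W$, Lemma~\ref{interpproj} gives $\opnorm{P_S x} \le \opnorm{x}$ for all $x$, which is precisely monotonicity of the partial sum operators once $S$ ranges over the finite ``initial'' subtrees $T_1 \subset T_2 \subset \cdots$ with $\bigcup_n T_n = T$ used to enumerate the basis.

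Finally I would assemble the basis property: fix such an increasing exhaustion $T_1 \subset T_2 \subset \cdots$ by finite unrooted subtrees and enumerate $\{e_\eta : \eta \in T\}$ compatibly (so that $T_n$ is an initial segment of the enumeration for each $n$). For $x \in c_{00}(T)$ we have $P_{T_n} x = x$ eventually, and since $\sup_n \opnorm{P_{T_n}} \le 1$, the uniform boundedness principle extends $\opnorm{x - P_{T_n} x} \to 0$ from the dense set $c_{00}(T)$ to all of $A$. Together with linear independence of the $e_\eta$ (immediate, as distinct $e_\eta$ have distinct supports), this shows $\{e_\eta : \eta \in T\}$ is a Schauder basis of $A$; the bound $\opnorm{P_{T_n}} \le 1$ makes it monotone.

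The step I expect to be the only real subtlety is confirming $P_S W \subset W$, i.e.\ that the DFJP-interpolation set $W = \overline{\mathrm{co}}\,\Phi$ is genuinely invariant under the coordinate projections and not merely under the ``full-branch'' projections $P_\sigma$ of \eqref{E005}. Everything else is a routine transfer, via Lemma~\ref{interpproj}, of the monotonicity already built into $E$ in Definition~\ref{def:E}.
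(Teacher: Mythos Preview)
Your approach is essentially the paper's: invoke Lemma~\ref{interpproj} (with $P_S W\subset W$) to transfer the monotonicity \eqref{E003} from $E$ to $A$, then deduce the Schauder basis property from uniform boundedness of the partial sum projections. The paper compresses the latter into a citation of \cite[p.~316, Lemma~1(ix)]{dfjp}, while you spell it out; both are fine.

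One small confusion in your parenthetical: the case you worry about, where $S$ ``omits an initial segment of the branch'' yet $P_S x$ is supported on a tail, cannot occur. By definition an unrooted subtree $S$ is closed under (non-empty) initial segments, so $S\cap\{(\sigma_1),(\sigma_1,\sigma_2),\dots\}$ is always an initial segment of the branch (possibly empty, possibly the whole branch). Hence $P_S x$ is a genuine partial sum of $x$ along the branch, and $P_S x\in B_{E_\sigma}\subset\Phi$ follows immediately from \eqref{E003}. With that clarification your verification of $P_S\Phi\subset\Phi$ (and thus $P_S W\subset W$) is clean, and the rest goes through.
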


\begin{proof}
The associated projections $ P_{S} $ satisfy $ \opnorm{P_{S}x} \leq \opnorm{x} $ by \eqref{E003} and Lemma~\ref{interpproj}. The fact thus follows from \cite[p. 316, Lemma~1(ix)]{dfjp}.
\end{proof}

\begin{fact} \label{factA2}
$ A $ contains an $ 1 $-complemented isometric copy of $ F_{\sigma} $ for every $ \sigma \in [T] $.
\end{fact}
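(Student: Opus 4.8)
The plan is to deduce Fact~\ref{factA2} from Lemma~\ref{interpproj} applied with $ X = E $, $ W = \overline{\mathrm{co}}\,\Phi $ and $ P = P_{\sigma} $, and then to combine the conclusion with Fact~\ref{factisigma}. So the first task is to verify that the hypotheses of Lemma~\ref{interpproj} hold for this choice of data.

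That $ \Vert P_{\sigma} \Vert \leq 1 $ is immediate from \eqref{E003}, since $ P_{\sigma} $ is one of the projections $ P_{S} $, namely the one associated with the single branch $ S = \{ (\sigma_{1}), (\sigma_{1}, \sigma_{2}), \dots \} $ determined by $ \sigma $. To see that $ P_{\sigma} W \subset W $, I would first check that $ P_{\sigma} $ maps $ \Phi $ into itself: if $ x \in B_{E_{\tau}} $ for some $ \tau \in [T] $, then $ x $ is supported on the branch of $ \tau $, so $ P_{\sigma} x $ is supported on the common initial segment of $ \sigma $ and $ \tau $ and hence lies on the branch of $ \sigma $; moreover $ \Vert P_{\sigma} x \Vert \leq \Vert x \Vert \leq 1 $ by \eqref{E003}, so $ P_{\sigma} x \in B_{E_{\sigma}} \subset \Phi $. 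Since $ P_{\sigma} $ is linear and bounded and $ W $ is the closed convex hull of $ \Phi $, it follows that $ P_{\sigma} W \subset W $.

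Next I would establish the stronger relation $ P_{\sigma} W = P_{\sigma} B_{E} $. On the one hand, $ \Phi \subset B_{E} $ and $ B_{E} $ is closed and convex, so $ W = \overline{\mathrm{co}}\,\Phi \subset B_{E} $, whence $ P_{\sigma} W \subset P_{\sigma} B_{E} = B_{E_{\sigma}} $, the last equality being the one recorded in the proof of Fact~\ref{factesigma}. On the other hand, $ B_{E_{\sigma}} \subset \Phi \subset W $ and $ P_{\sigma} $ restricts to the identity on $ E_{\sigma} $, so $ B_{E_{\sigma}} = P_{\sigma} B_{E_{\sigma}} \subset P_{\sigma} W $. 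Hence $ P_{\sigma} W = B_{E_{\sigma}} = P_{\sigma} B_{E} $, and Lemma~\ref{interpproj} applies: $ A $ contains an $ 1 $-complemented isometric copy of $ P_{\sigma} E = E_{\sigma} $. Finally, Fact~\ref{factisigma} identifies $ E_{\sigma} $ isometrically with $ F_{\sigma} $ via $ f^{\sigma}_{n} \mapsto e_{(\sigma_{1}, \dots, \sigma_{n})} $, so the same subspace of $ A $ is simultaneously an $ 1 $-complemented isometric copy of $ F_{\sigma} $, as desired.

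I do not expect any real obstacle here; the argument is essentially bookkeeping around Lemma~\ref{interpproj}. The one point requiring a little care — and the only place where the specific choice $ W = \overline{\mathrm{co}}\,\Phi $ is used — is the identity $ P_{\sigma} W = P_{\sigma} B_{E} $, which is what forces the interpolation norm to be a constant multiple of the original norm on $ E_{\sigma} $; this rests on the two inclusions $ B_{E_{\sigma}} \subset \Phi $ and $ \Phi \subset B_{E} $.
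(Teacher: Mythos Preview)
Your proposal is correct and follows exactly the paper's own proof: apply Lemma~\ref{interpproj} with $P = P_{\sigma}$, verifying $P_{\sigma}(\overline{\mathrm{co}}\,\Phi) \subset \overline{\mathrm{co}}\,\Phi$ and $P_{\sigma}(\overline{\mathrm{co}}\,\Phi) = B_{E_{\sigma}} = P_{\sigma} B_{E}$, and then invoke Fact~\ref{factisigma}. You have simply spelled out the verifications that the paper leaves implicit.
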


\begin{proof}
Recall that $ F_{\sigma} $ is isometric to $ E_{\sigma} = P_{\sigma} E $ by Fact~\ref{factisigma}. The assumptions of Lemma~\ref{interpproj} are met for $ P = P_{\sigma} $, since $ P_{\sigma} (\overline{\mathrm{co}} \, \Phi) \subset \overline{\mathrm{co}} \, \Phi $ and $ P_{\sigma} (\overline{\mathrm{co}} \, \Phi) = B_{E_{\sigma}} = P_{\sigma} B_{E} $.
\end{proof}

\begin{proposition} \label{reflexivity}
If the space $ F_{\sigma} $ is reflexive for every $ \sigma \in [T] $, then the space $ A $ is also reflexive.
\end{proposition}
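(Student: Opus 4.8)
The plan is to reduce the reflexivity of $A$ to the weak compactness of the convex set $\overline{\mathrm{co}}\,\Phi$ and then appeal to the reflexivity criterion built into the DFJP interpolation method. Concretely, recall from \cite{dfjp} that if $W$ is a bounded, closed, convex, symmetric subset of a Banach space $X$ which is \emph{weakly compact}, then the $2$-interpolation space of the pair $(X,W)$ is reflexive. Since $A$ is, by Definition~\ref{def:A}, precisely the $2$-interpolation space of $(E, \overline{\mathrm{co}}\,\Phi)$, it suffices to show that $\overline{\mathrm{co}}\,\Phi$ is weakly compact in $E$.

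I would establish this last point in two steps. First, the reflexivity hypothesis on the spaces $F_{\sigma}$ lets me invoke Lemma~\ref{phi}, which says exactly that $\Phi$ is compact in the weak topology of $E$. Second, $\Phi$ is norm-bounded (since $B_{E_{\sigma}} \subseteq B_{E}$ for every $\sigma \in [T]$) and symmetric, so by the Krein--\v{S}mulian theorem its closed convex hull $\overline{\mathrm{co}}\,\Phi$ is again weakly compact; it is moreover a bounded, closed, convex, symmetric set, as the interpolation construction requires. Combining these facts with the cited DFJP criterion yields that $A$ is reflexive.

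The only delicate ingredient is the DFJP reflexivity criterion itself, but it is entirely classical: writing $j \colon A \to E$ for the canonical inclusion (which is injective and norm-decreasing), one shows that $j^{**}$ maps $B_{A^{**}}$ into the $w^{*}$-closure of $\overline{\mathrm{co}}\,\Phi$ computed in $E^{**}$; when $\overline{\mathrm{co}}\,\Phi$ is weakly compact this $w^{*}$-closure is just $\overline{\mathrm{co}}\,\Phi \subseteq E$, so $j^{**}$ takes values in $E$, and the injectivity of $j$ then forces $A^{**} = A$. This is contained in \cite[p.\,316, Lemma~1]{dfjp} and the remarks following it, so in the write-up I would simply cite it rather than reprove it. In short, the genuine work has already been done in Lemma~\ref{phi}, and Proposition~\ref{reflexivity} is its formal consequence via interpolation; I do not anticipate any real obstacle beyond invoking the right off-the-shelf results.
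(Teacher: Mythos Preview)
Your proposal is correct and matches the paper's own proof almost verbatim: invoke Lemma~\ref{phi} to get weak compactness of $\Phi$, apply Krein--\v{S}mulian to pass to $\overline{\mathrm{co}}\,\Phi$, and then cite the DFJP reflexivity criterion. The only cosmetic difference is that the paper cites \cite[p.~313, Lemma~1(iv)]{dfjp} rather than p.~316.
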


\begin{proof}
By Lemma~\ref{phi} and the Krein-Smulian theorem, the set $ \overline{\mathrm{co}} \, \Phi $ is weakly compact. To show that $ A $ is reflexive, it is sufficient to apply \cite[p. 313, Lemma~1(iv)]{dfjp}.
\end{proof}

\section{A rotund version of the tree space}

The following definition of a tree space is based on a construction from \cite{bossard1} which was applied also in \cite{godefroy} and \cite{godkal}. Regarding the results from these papers, it is not surprising that this tree space preserves strict convexity of the norm. However, it turns out that the method is suitable also for amalgamating spaces which are not isometrically universal (see Proposition~\ref{nurotB}).

\begin{definition} \label{def:B}
Let $ \Lambda, T, (F_{\sigma}, \Vert \cdot \Vert_{\sigma}) $ and $ f^{\sigma}_{n} $ be as in Definition \ref{def:E}. Suppose moreover that there are positive constants $ c_{1}, c_{2}, \dots $ such that, for every $ \sigma \in [T] $,
\begin{equation} \label{B001}
\Vert \pi_{n}f \Vert_{\sigma}^{2} \geq \Vert \pi_{n-1}f \Vert_{\sigma}^{2} + c_{n}^{2} |f^{*}_{n}(f)|^{2}, \quad f \in F_{\sigma}, \, n \in \mathbb{N},
\end{equation}
where $ f^{*}_{1}, f^{*}_{2}, \dots $ is the dual basic sequence and $ \pi_{0}, \pi_{1}, \dots $ is the sequence of partial sum operators associated with the basis $ f^{\sigma}_{1}, f^{\sigma}_{2}, \dots $.

For every $ x \in c_{00}(T) $, let us consider the formulae
\begin{equation} \label{B002}
\opnorm{x}_{\sigma}^{2} = \Big\Vert \sum_{\eta \subset \sigma} x(\eta) f^{\sigma}_{|\eta|} \Big\Vert_{\sigma}^{2} + \sum_{\eta \not\subset \sigma} c_{|\eta|}^{2} |x(\eta)|^{2}, \quad \sigma \in [T],
\end{equation}
\begin{equation} \label{B003}
\opnorm{x} = \sup_{\sigma \in [T]} \opnorm{x}_{\sigma}
\end{equation}
and
\begin{equation} \label{B004}
P_{S}x = \mathbf{1}_{S} \cdot x,
\end{equation}
where $ S \subset T $ is an unrooted subtree. From the monotonicity of the bases $ f^{\sigma}_{n} $, we obtain
\begin{equation} \label{B005}
\opnorm{P_{S}x} \leq \opnorm{x}.
\end{equation}

Finally, we define $ B $ as  a completion of $ (c_{00}(T), \opnorm{\cdot}) $. Again, the system $ \{ b_{\eta} = \mathbf{1}_{\{ \eta \}} : \eta \in T \} $ is a basis of $ B $ which is monotone in the sense of formula (\ref{B005}). Therefore, we are allowed to consider all elements of $ B $ as systems $ x = \{ x(\eta) \}_{\eta \in T} $ of scalars. In this way, formulae \eqref{B002}, \eqref{B003}, \eqref{B004} and \eqref{B005} remain valid for every $ x \in B $.

For every $ \sigma \in [T] $, we further denote
\begin{equation} \label{B006}
B_{\sigma} = \{ x \in B : \eta \not\subset \sigma \Rightarrow x(\eta) = 0 \},
\end{equation}
\begin{equation} \label{B007}
P_{\sigma} = P_{\{ (\sigma_{1}), (\sigma_{1}, \sigma_{2}), \dots \}}.
\end{equation}
\end{definition}

\begin{fact} \label{factbsigma}
For every $ \sigma \in [T] $, the basis $ f^{\sigma}_{1}, f^{\sigma}_{2}, \dots $ of $ F_{\sigma} $ is $ 1 $-equivalent with the basis $ b_{(\sigma_{1})}, b_{(\sigma_{1}, \sigma_{2})}, \dots $ of $ B_{\sigma} $. In particular, the space $ B $ contains an $ 1 $-complemented isometric copy of $ F_{\sigma} $ for every $ \sigma \in [T] $.
\end{fact}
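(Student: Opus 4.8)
The plan is to follow the proof of Fact~\ref{factisigma} almost verbatim, the only new ingredient being the hypothesis \eqref{B001}, which is tailored precisely to absorb the extra quadratic terms appearing in \eqref{B002}. Fix $ \sigma \in [T] $, choose scalars $ r_{n} $ with $ r_{n} \neq 0 $ for only finitely many indices $ n $, and put $ f = \sum_{n} r_{n} f^{\sigma}_{n} \in F_{\sigma} $ and $ x = \sum_{n} r_{n} b_{(\sigma_{1}, \dots, \sigma_{n})} \in c_{00}(T) $. It suffices to prove $ \opnorm{x} = \Vert f \Vert_{\sigma} $.

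First I would evaluate $ \opnorm{x}_{\sigma} $: since every coordinate in the support of $ x $ lies on the branch determined by $ \sigma $, the sum $ \sum_{\eta \not\subset \sigma} $ in \eqref{B002} is empty for $ \tau = \sigma $, whence $ \opnorm{x}_{\sigma}^{2} = \Vert \sum_{\eta \subset \sigma} x(\eta) f^{\sigma}_{|\eta|} \Vert_{\sigma}^{2} = \Vert f \Vert_{\sigma}^{2} $, and therefore $ \opnorm{x} \geq \Vert f \Vert_{\sigma} $. For the reverse estimate, fix $ \tau \in [T] \setminus \{ \sigma \} $ and let $ l $ be the length of the longest common initial segment of $ \sigma $ and $ \tau $. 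Among the nonzero coordinates of $ x $, those $ \eta $ with $ |\eta| \leq l $ satisfy $ \eta \subset \tau $ and those with $ |\eta| > l $ do not, so \eqref{B002}, together with the $ 1 $-equivalence of $ f^{\sigma}_{1}, \dots, f^{\sigma}_{l} $ and $ f^{\tau}_{1}, \dots, f^{\tau}_{l} $, yields
\[
\opnorm{x}_{\tau}^{2} = \Big\Vert \sum_{n=1}^{l} r_{n} f^{\sigma}_{n} \Big\Vert_{\sigma}^{2} + \sum_{n > l} c_{n}^{2} |r_{n}|^{2} = \Vert \pi_{l} f \Vert_{\sigma}^{2} + \sum_{n > l} c_{n}^{2} |r_{n}|^{2}.
\]
Now I would apply \eqref{B001} with $ f^{*}_{n}(f) = r_{n} $ and telescope it from $ n = l+1 $ to an index $ N $ beyond the support of $ (r_{n}) $, so that $ \pi_{N} f = f $; this gives $ \Vert f \Vert_{\sigma}^{2} \geq \Vert \pi_{l} f \Vert_{\sigma}^{2} + \sum_{n = l+1}^{N} c_{n}^{2} |r_{n}|^{2} = \opnorm{x}_{\tau}^{2} $. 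Hence $ \opnorm{x}_{\tau} \leq \Vert f \Vert_{\sigma} $ for every $ \tau \neq \sigma $, and taking the supremum over $ \tau \in [T] $ in \eqref{B003} yields $ \opnorm{x} = \Vert f \Vert_{\sigma} $, i.e.\ the asserted $ 1 $-equivalence.

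For the ``in particular'' clause, the $ 1 $-equivalence just proved shows that the map $ f^{\sigma}_{n} \mapsto b_{(\sigma_{1}, \dots, \sigma_{n})} $ extends to an isometry of $ F_{\sigma} $ onto $ B_{\sigma} $; and since $ P_{\sigma} $ maps $ B $ into $ B_{\sigma} $, restricts to the identity on $ B_{\sigma} $, and satisfies $ \opnorm{P_{\sigma} x} \leq \opnorm{x} $ by \eqref{B005}, the subspace $ B_{\sigma} = P_{\sigma} B $ is $ 1 $-complemented in $ B $. The only step that genuinely uses the new data of Definition~\ref{def:B} is the telescoping of \eqref{B001}, which is exactly what is needed to dominate the off-branch penalty $ \sum_{n > l} c_{n}^{2} |r_{n}|^{2} $ by the norm gap $ \Vert f \Vert_{\sigma}^{2} - \Vert \pi_{l} f \Vert_{\sigma}^{2} $; the rest is formal bookkeeping identical to Fact~\ref{factisigma}, so I do not anticipate any serious obstacle.
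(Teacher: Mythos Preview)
Your proof is correct and follows exactly the approach the paper indicates: the paper does not actually write out a proof of this fact, but remarks that it ``can be proven similarly as Fact~\ref{factisigma}, with the difference that \eqref{B001} is applied,'' and this is precisely what you do, using the telescoping of \eqref{B001} to dominate the off-branch quadratic penalty. There is nothing to add.
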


We do not prove the fact, as an analogous statement appeared in \cite{godefroy} and \cite{godkal}. Actually, the fact can be proven similarly as Fact~\ref{factisigma}, with the difference that (\ref{B001}) is applied. The proof of the following lemma, which is essentially contained in \cite[p. 186]{godkal}, is also skipped.

\begin{lemma}[\cite{godkal}] \label{supattaining}
For every $ x \in B $, the supremum in (\ref{B003}) is attained.
\end{lemma}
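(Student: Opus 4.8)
The plan is to recast the supremum $(\ref{B003})$ as one over the \emph{nodes} of $T$ of a quantity that is monotone along branches, and then to pick out an optimal infinite branch by diagonalising over a finite exhaustion of $T$.

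First I would introduce, for $\eta \in T$, the quantity
$$ d_{\eta}(x) \;=\; \Bigl\Vert \textstyle\sum_{\nu \subseteq \eta} x(\nu) f^{\eta}_{|\nu|} \Bigr\Vert_{\eta}^{2} \;-\; \sum_{\nu \subseteq \eta} c_{|\nu|}^{2}\,|x(\nu)|^{2}, \qquad d_{\emptyset}(x) = 0, $$
which depends only on $\eta$ by the $1$-equivalence hypothesis of Definition~\ref{def:E} (extend $\eta$ to any branch), and which satisfies $0 \le d_{\eta'}(x) \le d_{\eta}(x)$ whenever $\eta' \subseteq \eta$ by iterating $(\ref{B001})$. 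Iterating $(\ref{B001})$ also gives $\sum_{\eta \in T} c_{|\eta|}^{2}\,|x(\eta)|^{2} \le \opnorm{x}^{2} < \infty$ and, via Fact~\ref{factbsigma}, that $\sum_{\eta \subset \sigma} x(\eta) f^{\sigma}_{|\eta|}$ converges in $F_{\sigma}$ for every $\sigma \in [T]$; a routine computation from $(\ref{B002})$ then yields $\opnorm{x}_{\sigma}^{2} = \sum_{\eta \in T} c_{|\eta|}^{2}\,|x(\eta)|^{2} + \sup_{l} d_{\sigma|_{l}}(x)$, and hence $\opnorm{x}^{2} = \sum_{\eta \in T} c_{|\eta|}^{2}\,|x(\eta)|^{2} + s$ with $s := \sup_{\eta \in T} d_{\eta}(x)$ (using that $T$ is pruned). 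So the lemma reduces to finding $\sigma \in [T]$ with $\sup_{l} d_{\sigma|_{l}}(x) = s$.

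If $s = d_{\eta_{0}}(x)$ for some $\eta_{0} \in T$, then any branch through $\eta_{0}$ does the job (such a branch exists since $T$ is pruned), so I may assume $d_{\eta}(x) < s$ for every $\eta$. Fix an increasing sequence of finite unrooted subtrees $T_{k} \uparrow T$; since $\{b_{\eta}\}$ is a basis of $B$, $\varepsilon_{k} := \opnorm{(I - P_{T_{k}})x} \to 0$. The key estimate I would establish is that, for every $k$ and every $\eta \in T$,
$$ d_{\eta}(x) - d_{\eta|_{j_{k}(\eta)}}(x) \;\le\; \varepsilon_{k}^{2} + 2\opnorm{x}\,\varepsilon_{k}, $$
where $j_{k}(\eta)$ is the length of the longest initial segment of $\eta$ lying in $T_{k}$. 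This follows by writing $\sum_{\nu \subseteq \eta} x(\nu) f^{\eta}_{|\nu|} = g + h$ with $g$ the part supported on $T_{k}$ (so $\Vert g\Vert_{\eta}^{2} = d_{\eta|_{j_{k}(\eta)}}(x) + \sum_{\nu\subseteq\eta|_{j_{k}(\eta)}} c_{|\nu|}^{2}|x(\nu)|^{2} \le \opnorm{x}^{2}$) and $h = \sum_{\nu \subseteq \eta} z(\nu) f^{\eta}_{|\nu|}$ for $z = (I-P_{T_{k}})x$, then using $\Vert g + h\Vert_{\eta}^{2} - \Vert g\Vert_{\eta}^{2} - \Vert h\Vert_{\eta}^{2} \le 2\Vert g\Vert_{\eta}\Vert h\Vert_{\eta}$ together with $\Vert h\Vert_{\eta} \le \opnorm{z} = \varepsilon_{k}$ and $d_{\eta}(z) \le \opnorm{z}^{2}$ (all read off from the representation of $\opnorm{\cdot}^{2}$ above). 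Taking the supremum over $\eta$ and using that $T_{k}$ is finite produces a node $\nu_{k} \in T_{k}$ with $d_{\nu_{k}}(x) \ge s - \varepsilon_{k}^{2} - 2\opnorm{x}\,\varepsilon_{k}$.

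Finally I would diagonalise. For a fixed $j$, applying the key estimate to $\eta = \nu_{k}$ gives $d_{\nu_{k}|_{j_{j}(\nu_{k})}}(x) \ge d_{\nu_{k}}(x) - \varepsilon_{j}^{2} - 2\opnorm{x}\,\varepsilon_{j}$; since $T_{j}$ is finite there is a single node $\mu_{j} \in T_{j}$ with $\nu_{k}|_{j_{j}(\nu_{k})} = \mu_{j}$ for all $k$ in an infinite set $K_{j}$, and I may arrange $K_{1} \supseteq K_{2} \supseteq \cdots$. The inequality $j_{j}(\nu_{k}) \le j_{j+1}(\nu_{k})$ forces $\mu_{j} \subseteq \mu_{j+1}$, and letting $k \to \infty$ inside $K_{j}$ yields $d_{\mu_{j}}(x) \ge s - \varepsilon_{j}^{2} - 2\opnorm{x}\,\varepsilon_{j}$. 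The standing assumption $d_{\eta}(x) < s$ then forces $|\mu_{j}| \to \infty$, so $\sigma := \bigcup_{j} \mu_{j}$ is an infinite branch, and $\sup_{l} d_{\sigma|_{l}}(x) \ge \sup_{j} d_{\mu_{j}}(x) = s$, i.e. $\opnorm{x}_{\sigma} = \opnorm{x}$. The main obstacle throughout is that $[T]$ need not be compact — the tree may branch infinitely — so one cannot simply extract a convergent subsequence of near-optimal branches; the role of the tail estimate is to show that a near-optimal branch is, up to an error tending to $0$, already determined inside the finite trees $T_{k}$, which is exactly what makes the diagonalisation go through.
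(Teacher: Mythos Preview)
Your argument is correct. The paper itself omits the proof entirely, deferring to \cite[p.~186]{godkal}, so there is no in-paper proof to compare against; your route---rewriting $\opnorm{x}_{\sigma}^{2}$ as a constant plus the branch-monotone node quantity $d_{\eta}(x)$, establishing the uniform tail estimate $d_{\eta}(x)-d_{\eta|_{j_k(\eta)}}(x)\le \varepsilon_k^2+2\opnorm{x}\varepsilon_k$, and then diagonalising over a finite exhaustion---is a clean, self-contained version of the compactness substitute that the cited reference uses, and every step checks out (including the edge cases $s=0$ and $j_k(\eta)=0$, which you handle implicitly via $d_{\emptyset}(x)=0$).
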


\begin{lemma} \label{linesegm}
Let $ [u, v] $ be a non-degenerate line segment in $ B $ such that $ \opnorm{\cdot} $ is constant on $ [u, v] $. Let $ w = \frac{1}{2}(u+v) $ and the supremum in (\ref{B003}) for $ x = w $ be attained at a $ \sigma \in [T] $. Then $ v - u \in B_{\sigma} $ and $ [P_{\sigma}u, P_{\sigma}v] $ is also a non-degenerate line segment on which $ \opnorm{\cdot} $ is constant.
\end{lemma}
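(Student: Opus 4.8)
The plan is to reduce the whole statement to the single branch seminorm $\opnorm{\cdot}_\sigma$, exploiting the splitting coming from \eqref{B002} into a ``$B_\sigma$-part'' and a strictly convex weighted-$\ell_2$ part. Throughout I would write $r=\opnorm{u}=\opnorm{v}=\opnorm{w}$, which is legitimate because $\opnorm{\cdot}$ is constant on $[u,v]$; by hypothesis $\opnorm{w}_\sigma=\opnorm{w}=r$. First I would record that $\opnorm{\cdot}_\sigma$ is a seminorm on $B$: by \eqref{B002} one has $\opnorm{x}_\sigma^2=p(x)^2+q(x)^2$ with $p(x)=\big\Vert\sum_{\eta\subset\sigma}x(\eta)f^\sigma_{|\eta|}\big\Vert_\sigma$ and $q(x)=\big(\sum_{\eta\not\subset\sigma}c_{|\eta|}^2|x(\eta)|^2\big)^{1/2}$, both of which are seminorms (the first a norm, the second a Hilbertian norm, applied to a linear image of $x$), and the $\ell_2$-combination of two seminorms is again a seminorm. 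Writing $x_t=(1-t)u+tv$ for $t\in[0,1]$, the function $t\mapsto\opnorm{x_t}_\sigma$ is then convex, it is dominated by $\opnorm{x_t}=r$ (since $\opnorm{\cdot}_\sigma\le\opnorm{\cdot}$ by \eqref{B003}), and it equals $r$ at the interior point $t=\tfrac12$; a convex function with these properties is identically $r$. Hence $p(x_t)^2+q(x_t)^2=r^2$ for every $t\in[0,1]$.

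Next I would split this identity. The maps $t\mapsto p(x_t)$ and $t\mapsto q(x_t)$ are nonnegative and convex, so their squares are convex; two convex functions with constant sum are both affine, and an elementary real-variable argument shows that a nonnegative convex function whose square is affine on a non-degenerate interval must have constant square. Therefore $p(x_t)\equiv\alpha_0$ and $q(x_t)\equiv\beta_0$ are constant, with $\alpha_0^2+\beta_0^2=r^2$. Now $q$ is the norm of the Hilbert space structure $\langle x,y\rangle=\sum_{\eta\not\subset\sigma}c_{|\eta|}^2\,x(\eta)\overline{y(\eta)}$ on the coordinates outside $\sigma$ (all weights $c_{|\eta|}$ being positive), hence strictly convex. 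Since $q$ is constant on the segment $[(I-P_\sigma)u,(I-P_\sigma)v]$, strict convexity (the case $\beta_0=0$ being covered by the fact that $q$ is a genuine norm on those coordinates) forces $(I-P_\sigma)u=(I-P_\sigma)v$, so that $v-u=P_\sigma(v-u)\in B_\sigma$.

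It remains to treat $[P_\sigma u,P_\sigma v]$. This segment is non-degenerate because $P_\sigma v-P_\sigma u=P_\sigma(v-u)=v-u\ne 0$, and for every $t\in[0,1]$ Fact~\ref{factbsigma} identifies the norm of an element of $B_\sigma$ with the first summand of \eqref{B002}, giving $\opnorm{(1-t)P_\sigma u+tP_\sigma v}=\opnorm{P_\sigma x_t}=p(x_t)=\alpha_0$; thus $\opnorm{\cdot}$ is constant on $[P_\sigma u,P_\sigma v]$, as required.

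I do not expect a genuine obstacle here. The only delicate points are the small real-variable lemma (nonnegative convex with affine square $\Rightarrow$ constant square) and the bookkeeping that identifies $\opnorm{P_\sigma x}$ with the first summand $p(x)$ of \eqref{B002}, which is exactly the content of Fact~\ref{factbsigma}; everything else is routine manipulation of convex functions.
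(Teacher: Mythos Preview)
Your proof is correct and follows essentially the same route as the paper: both arguments rest on the decomposition $\opnorm{x}_\sigma^2=\opnorm{P_\sigma x}^2+|x|_\sigma^2$ (your $p(x)^2+q(x)^2$), the chain $\opnorm{w}=\opnorm{w}_\sigma\le\tfrac12(\opnorm{u}_\sigma+\opnorm{v}_\sigma)\le\opnorm{w}$, and the strict convexity of the Hilbertian part $q$ to force $(I-P_\sigma)u=(I-P_\sigma)v$. The only difference is in how the splitting is exploited: the paper works at the three points $u,v,w$ and expands $2\opnorm{u}_\sigma^2+2\opnorm{v}_\sigma^2-4\opnorm{w}_\sigma^2=0$ via a parallelogram-type identity to read off $\opnorm{P_\sigma u}=\opnorm{P_\sigma v}$, $\opnorm{P_\sigma(u+v)}=\opnorm{P_\sigma u}+\opnorm{P_\sigma v}$ and the analogous equalities for $|\cdot|_\sigma$ directly, whereas you first upgrade to $\opnorm{x_t}_\sigma\equiv r$ on the whole segment and then use the ``two convex functions with constant sum are affine'' and ``nonnegative convex with affine square is constant'' observations. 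Your route is slightly longer but entirely sound; the paper's algebraic shortcut avoids the auxiliary real-variable lemma.
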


\begin{proof}
Let us consider a seminorm
$$ |x|_{\sigma}^{2} = \sum_{\eta \not\subset \sigma} c_{|\eta|}^{2} |x(\eta)|^{2}, \quad x \in B. $$
Using Fact \ref{factbsigma}, we obtain
$$ \opnorm{x}_{\sigma}^{2} = \opnorm{P_{\sigma}x}^{2} + |x|_{\sigma}^{2}, \quad x \in B. $$
We can compute
$$ \opnorm{w} = \opnorm{w}_{\sigma} \leq \frac{1}{2}(\opnorm{u}_{\sigma}+\opnorm{v}_{\sigma}) \leq \frac{1}{2}(\opnorm{u}+\opnorm{v}) = \opnorm{w}, $$
and it is clear that all these norms must be equal. Thus,
$$
\begin{aligned}
0 & = 2\opnorm{u}_{\sigma}^{2} + 2\opnorm{v}_{\sigma}^{2} - 4\opnorm{w}_{\sigma}^{2} \\
 & = 2\opnorm{P_{\sigma}u}^{2} + 2\opnorm{P_{\sigma}v}^{2} - 4\opnorm{P_{\sigma}w}^{2} + 2|u|_{\sigma}^{2} + 2|v|_{\sigma}^{2} - 4|w|_{\sigma}^{2} \\
 & = (\opnorm{P_{\sigma}u} - \opnorm{P_{\sigma}v})^{2} + (\opnorm{P_{\sigma}u} + \opnorm{P_{\sigma}v})^{2} - \opnorm{P_{\sigma}(u+v)}^{2} \\
 & \quad + (|u|_{\sigma} - |v|_{\sigma})^{2} + (|u|_{\sigma} + |v|_{\sigma})^{2} - |u+v|_{\sigma}^{2}.
\end{aligned}
$$
It follows that
\begin{equation} \label{linesegm1}
\opnorm{P_{\sigma}u} = \opnorm{P_{\sigma}v}, \quad \opnorm{P_{\sigma}(u+v)} = \opnorm{P_{\sigma}u} + \opnorm{P_{\sigma}v},
\end{equation}
\begin{equation} \label{linesegm2}
|u|_{\sigma} = |v|_{\sigma}, \quad |u+v|_{\sigma} = |u|_{\sigma} + |v|_{\sigma}.
\end{equation}
By (\ref{linesegm1}), the norm $ \opnorm{\cdot} $ is constant on $ [P_{\sigma}u, P_{\sigma}v] $. By (\ref{linesegm2}), the points $ u $ and $ v $ satisfy $ u(\eta) = v(\eta) $ for every $ \eta \not\subset \sigma $. That is, $ u - P_{\sigma}u = v - P_{\sigma}v $. Therefore, $ v - u = P_{\sigma}v - P_{\sigma}u \in B_{\sigma} $ and the segment $ [P_{\sigma}u, P_{\sigma}v] $ is non-degenerate.
\end{proof}

\begin{proposition} \label{nurotB}
{\rm (a)} If none $ F_{\sigma}, \sigma \in [T], $ is isometrically universal for all separable Banach spaces, then $ B $ is also non-universal.

{\rm (b)} If every $ F_{\sigma}, \sigma \in [T], $ is strictly convex, then $ B $ is strictly convex.
\end{proposition}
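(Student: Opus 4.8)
The plan is to derive both statements by pushing a constant-norm segment of $ B $ down to one of the branch spaces $ F_{\sigma} $, using Lemma~\ref{supattaining} (the supremum in \eqref{B003} is attained) together with Lemma~\ref{linesegm} (the segment then lives on a single branch). For (b), I would argue by contradiction: if $ B $ is not strictly convex, there are $ u \neq v $ with $ \opnorm{u} = \opnorm{v} = \opnorm{\frac{1}{2}(u+v)} $, and a routine convexity argument upgrades this to $ \opnorm{\cdot} $ being constant on the whole segment $ [u,v] $. Putting $ w = \frac{1}{2}(u+v) $ and choosing, via Lemma~\ref{supattaining}, a $ \sigma \in [T] $ attaining the supremum for $ x = w $, Lemma~\ref{linesegm} gives that $ [P_{\sigma}u, P_{\sigma}v] $ is a non-degenerate segment on which $ \opnorm{\cdot} $ is constant. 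Since Fact~\ref{factbsigma} identifies $ B_{\sigma} $ isometrically with $ F_{\sigma} $ (the relevant bases being $ 1 $-equivalent, so $ \opnorm{\cdot} $ on $ B_{\sigma} $ corresponds to $ \Vert \cdot \Vert_{\sigma} $), this produces a non-degenerate constant-norm segment in $ F_{\sigma} $, contradicting its strict convexity.

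For (a) the first step is to isolate the structural feature of a universal space that I will exploit: the space $ Z = C(\{0,1\}^{\mathbb{N}}) $ (which is isometrically universal for separable Banach spaces, being $ C $ of the Cantor set) admits a vector $ a_{0} \neq 0 $ and a set $ H \subset Z $ such that $ \overline{\mathrm{span}}\, H $ contains an isometric copy of $ Z $ and, for every $ h \in H $, there is an $ \varepsilon > 0 $ with $ \Vert a_{0} \pm \varepsilon h \Vert = \Vert a_{0} \Vert $. I would exhibit this explicitly: fix the clopen set $ U = \{ x \in \{0,1\}^{\mathbb{N}} : x_{1} = 0 \} $, take $ a_{0} = \mathbf{1}_{U} $ and $ H = \{ h \in Z : h|_{U} = 0,\ \Vert h \Vert_{\infty} \leq 1 \} $; then $ \overline{\mathrm{span}}\, H $ is (isometric to) $ C(\{0,1\}^{\mathbb{N}} \setminus U) \cong C(\{0,1\}^{\mathbb{N}}) $, and $ \Vert a_{0} \pm \varepsilon h \Vert = 1 $ for all $ 0 < \varepsilon \leq 1 $.

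The second step of (a) is the reduction. Assume for contradiction that $ B $ is isometrically universal, fix an isometric embedding $ j : Z \to B $, and set $ a = j(a_{0}) $. Using Lemma~\ref{supattaining}, fix $ \sigma \in [T] $ attaining the supremum in \eqref{B003} for $ x = a $; crucially, $ \sigma $ depends only on $ a $, not on the individual $ h $. For each $ h \in H $ with $ h \neq 0 $, the segment $ [a - \varepsilon j(h), a + \varepsilon j(h)] $ is non-degenerate, has midpoint $ a $, and its endpoints satisfy $ \opnorm{a \pm \varepsilon j(h)} = \Vert a_{0} \pm \varepsilon h \Vert = \opnorm{a} $, so $ \opnorm{\cdot} $ is constant on it; Lemma~\ref{linesegm} then forces $ 2\varepsilon j(h) \in B_{\sigma} $, i.e. $ j(h) \in B_{\sigma} $. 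Since $ B_{\sigma} $ is a closed subspace and $ j $ is continuous and linear, $ j(\overline{\mathrm{span}}\, H) \subset B_{\sigma} $, so $ B_{\sigma} $ — hence $ F_{\sigma} $, via Fact~\ref{factbsigma} — contains an isometric copy of $ Z $ and is therefore isometrically universal for all separable Banach spaces, contrary to hypothesis.

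The routine parts (the convexity argument for constancy of the norm on the full segment, and the identification $ \overline{\mathrm{span}}\, H \cong C(\{0,1\}^{\mathbb{N}}) $) should cause no trouble. The one genuinely load-bearing point, and what I expect to be the main obstacle to state cleanly, is that a \emph{single} branch $ \sigma $ — determined by $ a $ alone — works simultaneously for every $ h \in H $; this is exactly what lets the entire sub-copy $ j(\overline{\mathrm{span}}\, H) $ land inside $ B_{\sigma} $, and it is where the asymmetry between the two formulations of non-universality (the abstract one in Remark~(IV) versus the concrete $ C(\{0,1\}^{\mathbb{N}}) $) really gets used.
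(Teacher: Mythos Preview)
Your proof is correct and follows essentially the same route as the paper: for (b) you contradict strict convexity, pick $\sigma$ via Lemma~\ref{supattaining} at the midpoint, and push the segment into $B_\sigma\cong F_\sigma$ via Lemma~\ref{linesegm}; for (a) you embed $Z=C(\{0,1\}^{\mathbb{N}})$, take $a=j(\mathbf{1}_{U})$, fix one $\sigma$ for $a$, and use Lemma~\ref{linesegm} on each segment $[a-\varepsilon j(h),a+\varepsilon j(h)]$ to force $j(h)\in B_\sigma$ for all $h$ supported off $U$ --- exactly the paper's argument with $\Delta(0)=U$ and $Z(1)=\overline{\mathrm{span}}\,H$. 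Your emphasis that a \emph{single} $\sigma$, depending only on $a$, serves for every $h$ is precisely the crux, and your framing via the abstract hypothesis of Remark~(IV) is a mild generalisation that costs nothing.
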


\begin{proof}
(a) Assume that $ B $ is isometrically universal for all separable Banach spaces. Let us denote
$$ \Delta = \{ 0, 1 \}^{\mathbb{N}}, \quad \Delta(i) = \{ \gamma \in \Delta : \gamma(1) = i \}, \quad i = 0, 1, $$
$$ Z = C(\Delta), \quad Z(i) = \{ h \in Z : \gamma \notin \Delta(i) \Rightarrow h(\gamma) = 0 \}, \quad i = 0, 1. $$
Considering an isometry $ I : Z \to B $, we denote
$$ x = I (\mathbf{1}_{\Delta(0)}). $$
Lemma~\ref{supattaining} provides us with a $ \sigma \in [T] $ at which the supremum in (\ref{B003}) is attained. We claim that the space $ B_{\sigma} $ (and therefore the space $ F_{\sigma} $ by Fact~\ref{factbsigma}) is universal, showing that $ I $ maps $ Z(1) $ into $ B_{\sigma} $.

Given an $ h \in Z(1) $ with $ \Vert h \Vert \leq 1 $, we observe that $ \Vert \mathbf{1}_{\Delta(0)} \Vert = \Vert \mathbf{1}_{\Delta(0)} \pm h \Vert = 1 $, and so $ \opnorm{x} = \opnorm{x \pm Ih} = 1 $. By Lemma~\ref{linesegm}, we have $ Ih \in B_{\sigma} $.

(b) Assume that $ B $ is not strictly convex. It means that $ \opnorm{\cdot} $ is constant on a non-degenerate line segment $ [u, v] $. Let $ x = \frac{1}{2}(u+v) $ and let the supremum in (\ref{B003}) be attained at a $ \sigma \in [T] $ (which is provided by Lemma~\ref{supattaining}). By Lemma~\ref{linesegm}, the space $ B_{\sigma} $ is not strictly convex. Since $ B_{\sigma} $ and $ F_{\sigma} $ are isometric (see Fact~\ref{factbsigma}), the proof is finished.
\end{proof}

\section{Construction of branches} \label{sec:branches}

In the isomorphic setting, it is possible to construct a tree space in a way such that isomorphic copies of the spaces we want to amalgamate are placed on the infinite branches (as mentioned in the introduction). In the isometric setting, we are not allowed to renorm the spaces, and an additional embedding result is demanded.

We prove that a Banach space $ X $ with a monotone basis can be embedded into another (not much bigger) Banach space $ F $ with a monotone basis $ f_{1}, f_{2}, \dots $ such that the subspaces $ \mathrm{span} \{ f_{1}, \dots, f_{d} \} $ are chosen from a countable family of spaces. To this purpose, we employ the following notion which was useful also in \cite{garbulinska1,garbulinska2,garkub,kubis}.

\begin{definition}
A Banach space $ Z $ is called \emph{rational} if $ Z = \mathbb{R}^{d} $ with a norm such that its unit ball is generated by finitely many points whose all coordinates are rational numbers.

We note that the spaces which have a basis consisting of $ d $ elements will be often identified with $ \mathbb{R}^{d} $ in the obvious way.
\end{definition}

The main goal of this section is to prove the following result. Its proof is based on a construction provided in \cite[Section 4]{kurka} (which was based on a construction from \cite{godkal} in turn) but the present method is considerably simpler.

\begin{proposition} \label{rational}
Let $ X $ be a Banach space and $ e_{1}, e_{2}, \dots $ be a monotone basis of $ X $. Then there exists a Banach space $ F $ with a monotone basis $ f_{1}, f_{2}, \dots $ such that:

{\rm (1)} $ F $ is isomorphic to $ \ell _{2}(X) $.

{\rm (2)} If the basis $ e_{1}, e_{2}, \dots $ is shrinking, then the basis $ f_{1}, f_{2}, \dots $ is shrinking.

{\rm (3)} For every $ d \in \mathbb{N} $, the space $ \mathrm{span} \{ f_{1}, f_{2}, \dots , f_{d} \} $, identified with $ \mathbb{R}^{d} $, is rational.

{\rm (4)} $ F $ contains an $ 1 $-complemented isometric copy of $ X $.
\end{proposition}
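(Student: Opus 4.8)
The plan is to build $F$ as an $\ell_2$-type amalgam of countably many ``truncated and rationalized'' copies of $X$, arranged so that the finite-dimensional initial segments come from a single countable reservoir of rational spaces. First I would fix, for every $n \in \mathbb{N}$, a rational norm $\|\cdot\|_{n,j}$ on $\mathbb{R}^n$ approximating the norm that $\mathrm{span}\{e_1,\dots,e_n\}$ carries in $X$ to within $2^{-j}$, chosen monotone (so the partial-sum projections of $\mathbb{R}^n$ onto $\mathbb{R}^{n-1}$ have norm $1$) and compatible as $n$ varies. The key device is to interleave: build $F$ on a basis $f_1, f_2, \dots$ split into consecutive blocks, where the $k$-th block has length $n_k$ (with $n_k \to \infty$) and is normed by one of the rational spaces $(\mathbb{R}^{n_k}, \|\cdot\|_{n_k, j_k})$, while the whole space is the $\ell_2$-sum of these blocks — but twisted so that a diagonal copy of $X$ sits $1$-complemented inside. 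Concretely, one lets the $k$-th coordinate of $X$ appear (scaled) in block $k$ together with a rationally normed ``fresh'' finite-dimensional padding, so that $F$ is isomorphic, via an obvious norm comparison, to $\ell_2$ of the blocks, hence to $\ell_2(X)$ after identifying the $X$-part; this gives (1).

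Next I would verify (4) and (3) more carefully. For (4): the diagonal embedding $X \hookrightarrow F$ sending $e_k$ to a suitably normalized vector supported in block $k$ should be an isometry because within each block the rational norm agrees with (a scalar multiple of) the $X$-norm on the relevant subspace up to the chosen $2^{-j}$ error, and taking $j_k \to \infty$ fast makes the $\ell_2$-aggregation of these errors vanish in the limit; the accompanying norm-$1$ projection is the block-diagonal map killing the padding coordinates in each block, which has norm $1$ by the $\ell_2$-structure. For (3): by construction each initial segment $\mathrm{span}\{f_1,\dots,f_d\}$ is an initial segment of a finite union of whole rational blocks, and an initial segment of a monotone rational space (intersected appropriately) is again rational, so one needs only to check that truncating a rational ball at a coordinate hyperplane keeps finitely many rational generators — this is elementary. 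The monotonicity of $f_1, f_2, \dots$ follows because inside each block monotonicity is built into the rational norms, and across blocks the $\ell_2$-sum is monotone.

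For (2), shrinkingness: if $e_1, e_2, \dots$ is shrinking then $X$ is contained in $\ell_2(X)$-type amalgam whose ``padding'' pieces are finite-dimensional, hence one expects $F^*$ to be spanned by the dual functionals; I would argue via the standard criterion that a basis is shrinking iff every normalized block basic sequence tends weakly to $0$ — a normalized block sequence in $F$ either concentrates its mass in the $X$-diagonal part (where shrinkingness of $e_n$ applies) or spreads across infinitely many finite-dimensional paddings (where the $\ell_2$-structure forces weak nullity), and a convexity/splitting argument handles the mixed case.

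The main obstacle I anticipate is \textbf{(4)} together with making (1)–(4) hold \emph{simultaneously}: one must choose the block lengths $n_k$, the approximation indices $j_k$, and the scaling constants so that the rational approximations are good enough for the diagonal map to be a genuine isometry (not merely an isomorphism) while the blocks remain large enough and the padding rich enough to absorb everything into an $\ell_2(X)$ up to isomorphism and to keep every finite initial segment rational. Balancing the isometric requirement in (4) against the merely-isomorphic requirement in (1) — i.e. not letting the scaling constants $c_k$ degenerate — is the delicate bookkeeping, and I would expect the bulk of the proof to be the explicit inductive construction of these parameters plus the verification that the diagonal embedding is isometric in the completion.
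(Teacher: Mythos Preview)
Your proposal has two genuine gaps.

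First, on (3): an $\ell_2$-sum of rational spaces is \emph{not} rational. A rational space has by definition a polytopal unit ball, but the unit ball of an $\ell_2$-sum of two nontrivial factors is $\{(x,y): \|x\|^2 + \|y\|^2 \leq 1\}$, which has infinitely many extreme points. So as soon as $\mathrm{span}\{f_1,\dots,f_d\}$ straddles two of your blocks, its ball is no longer a polytope and (3) fails outright.

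Second, and more seriously, on (4): if $e_k$ is sent to a vector supported in block $k$ and the blocks are assembled by $\ell_2$, then the image of $x = \sum_k a_k e_k$ has norm $\bigl(\sum_k c_k^2 |a_k|^2\bigr)^{1/2}$ for some constants $c_k$ --- a weighted $\ell_2$-norm on the coefficients, not $\|x\|_X$. No choice of block lengths, approximation indices, or scalings repairs this; the obstruction is the $\ell_2$-assembly \emph{across} blocks, not the quality of the rational approximation \emph{within} them. You correctly flag (4) as the main obstacle, but it is structural, not bookkeeping: with your embedding the map is an isometry only when $X$ is already a Hilbert space.

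The paper resolves both issues by a different mechanism. The underlying set of $F$ \emph{is} $\ell_2(X)$, reindexed via a bijection $\pi:\mathbb{N}\to\mathbb{N}^2$. On each $F_d = \mathrm{span}\{f_1,\dots,f_d\}$ one fixes a single monotone rational norm $|\cdot|_d$ with $(1-2^{-2d-1})\|\cdot\|_{\ell_2(X)} \leq |\cdot|_d \leq (1-2^{-2d-2})\|\cdot\|_{\ell_2(X)}$, and defines $B_F = \overline{\mathrm{co}}\bigcup_{d} B_{(F_d,|\cdot|_d)}$; the staggered bounds force $B_F \cap F_d = \mathrm{co}\bigcup_{j\leq d} B_{(F_j,|\cdot|_j)}$, a rational polytope, giving (3) without any $\ell_2$-sum at the finite-dimensional level. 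The isometric copy of $X$ is the range of $Ux = \tfrac{\sqrt{3}}{2}(x,\tfrac12 x,\tfrac14 x,\dots)$, so each $x$ is spread across \emph{all} copies of $X$ with geometric decay. Its left inverse $T(x_n)=\tfrac{\sqrt{3}}{2}\sum_n 2^{1-n}x_n$ satisfies $\|Tf\|_X \leq \sqrt{1-4^{-d}}\,\|f\|_{\ell_2(X)} \leq |f|_d$ on $F_d$ by Cauchy--Schwarz, hence $\|Tf\|_X\leq\|f\|$ on all of $F$, and together with $\|Ux\|_{\ell_2(X)}=\|x\|_X$ this sandwiches $\|Ux\|=\|x\|_X$ exactly.
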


\begin{definition} \label{def:pi}
By $ \pi $ we will denote the bijection $ \mathbb{N} \to \mathbb{N}^{2} $ given by $ \pi(1) = (1,1), \pi(2) = (1,2), \pi(3) = (2,1), \pi(4) = (1,3) $ etc.
$$
\unitlength=5.8pt
\begin{picture}(20,20)(-3,-3)
\multiput(0,0)(5,0){4}{\circle*{0.7}}
\multiput(0,5)(5,0){4}{\circle*{0.7}}
\multiput(0,10)(5,0){4}{\circle*{0.7}}
\multiput(0,15)(5,0){4}{\circle*{0.7}}
\thicklines
\put(0,0){\vector(0,1){5}}
\put(0,5){\vector(1,-1){5}}
\put(5,0){\vector(-1,2){5}}
\put(0,10){\vector(1,-1){5}}
\put(5,5){\vector(1,-1){5}}
\put(10,0){\vector(-2,3){10}}
\put(0,15){\vector(1,-1){5}}
\put(5,10){\vector(1,-1){5}}
\put(10,5){\vector(1,-1){5}}
\put(-5,-2.5){$ \pi(1) $}
\put(-5,4.5){$ \pi(2) $}
\put(3,-2.5){$ \pi(3) $}
\put(-5,9.5){$ \pi(4) $}
\put(8,-2.5){$ \pi(6) $}
\end{picture}
$$
\end{definition}

\begin{definition} \label{ratconstr}
For every $ d \in \mathbb{N} $, let us fix an ordering of all monotone rational norms on $ \mathbb{R}^{d} $ into a sequence $ |\cdot|_{d,1} $, $ |\cdot|_{d,2} $, $ \dots $ .

Let $ e_{1}, e_{2}, \dots $ be a normalized monotone basis of a Banach space $ (X, \Vert \cdot \Vert_{X}) $. Let $ f_{i} = e_{\pi(i)} $ where $ e_{(n,k)} $ stands for the element of $ \ell _{2}(X) $ which has $ e_{k} $ on the $ n $-th place and $ 0 $ elsewhere. Let us moreover denote
\begin{equation} \label{rat01}
F_{d} = \mathrm{span} \{ f_{1}, f_{2}, \dots , f_{d} \}.
\end{equation}
For every $ d \in \mathbb{N} $, let $ l_{d} = l_{d}(e_{1}, e_{2}, \dots ) $ be the least natural number such that the monotone rational norm $ |\cdot|_{d} = |\cdot|_{d,l_{d}} $ satisfies
\begin{equation} \label{rat02}
\Big( 1 - \frac{1}{2^{2d+1}} \Big) \Vert f \Vert _{\ell _{2}(X)} \leq |f|_{d} \leq \Big( 1 - \frac{1}{2^{2d+2}} \Big) \Vert f \Vert _{\ell _{2}(X)}, \quad f \in F_{d}.
\end{equation}
The formula is valid for some monotone rational norm on $ F_{d} $, as the sequence $ f_{1}, f_{2}, \dots $ is a monotone basis of $ \ell _{2}(X) $.

We define a space $ F = F(e_{1}, e_{2}, \dots ) $ with a norm $ \Vert \cdot \Vert $ by
\begin{equation} \label{rat03}
B_{(F, \Vert \cdot \Vert)} = \overline{\mathrm{co}} \, \bigcup_{d=1}^{\infty} B_{(F_{d}, |\cdot|_{d})}.
\end{equation}
We also define operators
\begin{equation} \label{rat04}
T : \ell _{2}(X) \to X, \quad (x_{1}, x_{2}, \dots ) \mapsto \frac{\sqrt{3}}{2} \Big( x_{1} + \frac{1}{2}x_{2} + \frac{1}{4}x_{3} + \dots \Big),
\end{equation}
\begin{equation} \label{rat05}
U : X \to \ell _{2}(X), \quad x \mapsto \frac{\sqrt{3}}{2} \Big( x, \frac{1}{2}x, \frac{1}{4}x, \dots \Big).
\end{equation}

The sequence of partial sum operators associated with the basis $ f_{1}, f_{2}, \dots $ will be denoted by $ P_{1}, P_{2}, \dots $ .
\end{definition}

\begin{lemma} \label{ratlemma}
We have $ F = \ell _{2}(X) $ and the norm $ \Vert \cdot \Vert $ fulfills
\begin{equation} \label{rat06}
\frac{7}{8} \Vert f \Vert _{\ell _{2}(X)} \leq \Vert f \Vert \leq \Vert f \Vert _{\ell _{2}(X)}, \quad f \in F.
\end{equation}
The basis $ f_{1}, f_{2}, \dots $ forms a monotone basis of $ (F, \Vert \cdot \Vert) $ which is shrinking if the basis $ e_{1}, e_{2}, \dots $ is shrinking. Finally, for every $ d \in \mathbb{N} $, we have
\begin{equation} \label{rat07}
B_{(F_{d}, \Vert \cdot \Vert)} = \mathrm{co} \, \bigcup_{j=1}^{d} B_{(F_{j}, |\cdot|_{j})}.
\end{equation}
In particular, the space $ (F_{d}, \Vert \cdot \Vert) $ is rational.
\end{lemma}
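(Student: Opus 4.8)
The plan is to analyse the partial sum projections $ P_{1}, P_{2}, \dots $ associated with the basis $ f_{1}, f_{2}, \dots $ and to prove that $ P_{d} $ maps the unit ball $ B_{(F, \Vert \cdot \Vert)} $ \emph{exactly} onto the finite convex hull $ K_{d} := \mathrm{co} \, \bigcup_{j=1}^{d} B_{(F_{j}, |\cdot|_{j})} $ occurring in \eqref{rat07}; all the assertions then fall out. I would begin with the norm comparison \eqref{rat06}. By \eqref{rat02}, every $ f \in F_{d} $ satisfies $ |f|_{d} \leq \Vert f \Vert_{\ell_{2}(X)} $ and $ \Vert f \Vert_{\ell_{2}(X)} \leq (1 - 2^{-(2d+1)})^{-1} |f|_{d} \leq \tfrac{8}{7} |f|_{d} $, so $ B_{\ell_{2}(X)} \cap F_{d} \subset B_{(F_{d}, |\cdot|_{d})} \subset \tfrac{8}{7} B_{\ell_{2}(X)} $. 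Taking unions over $ d $, then closed convex hulls, and using that $ \bigcup_{d} F_{d} $ is dense in $ \ell_{2}(X) $, one gets $ B_{\ell_{2}(X)} \subset B_{(F, \Vert \cdot \Vert)} \subset \tfrac{8}{7} B_{\ell_{2}(X)} $, which is \eqref{rat06} and in particular gives $ F = \ell_{2}(X) $. I would also record that, because the enumeration $ \pi $ of Definition~\ref{def:pi} is such that each initial set $ \{ \pi(1), \dots, \pi(d) \} $ is downward closed in the second coordinate, the restriction of $ P_{d} $ to the $ n $-th summand of $ \ell_{2}(X) $ is a partial sum operator of the monotone basis $ e_{1}, e_{2}, \dots $; hence $ \Vert (P_{d} x)_{n} \Vert_{X} \leq \Vert x_{n} \Vert_{X} $ for each $ n $, so $ \Vert P_{d} x \Vert_{\ell_{2}(X)} \leq \Vert x \Vert_{\ell_{2}(X)} $ (the monotonicity of $ f_{1}, f_{2}, \dots $ in $ \ell_{2}(X) $ already invoked in Definition~\ref{ratconstr}).

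The heart of the argument is the claim that $ P_{d} \big( B_{(F_{j}, |\cdot|_{j})} \big) \subset K_{d} $ for every $ j \in \mathbb{N} $. For $ j \leq d $ this is clear, since $ P_{d} $ is the identity on $ F_{j} $. For $ j > d $, take $ f \in B_{(F_{j}, |\cdot|_{j})} $; then $ \Vert f \Vert_{\ell_{2}(X)} \leq (1 - 2^{-(2j+1)})^{-1} $ by \eqref{rat02}, so $ \Vert P_{d} f \Vert_{\ell_{2}(X)} \leq (1 - 2^{-(2j+1)})^{-1} $, and the right-hand inequality of \eqref{rat02} for the rational norm $ |\cdot|_{d} $ on $ F_{d} $ gives
$$ |P_{d} f|_{d} \leq \big( 1 - 2^{-(2d+2)} \big) \Vert P_{d} f \Vert_{\ell_{2}(X)} \leq \frac{1 - 2^{-(2d+2)}}{1 - 2^{-(2j+1)}} \leq 1, $$
the last step holding because $ 2d+2 \leq 2j+1 $ whenever $ j > d $. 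Thus $ P_{d} f \in B_{(F_{d}, |\cdot|_{d})} \subset K_{d} $. As $ K_{d} $ is a convex hull of finitely many compact convex sets it is compact and convex, so by linearity $ P_{d} $ maps $ \mathrm{co} \, \bigcup_{j} B_{(F_{j}, |\cdot|_{j})} $, and hence its closure $ B_{(F, \Vert \cdot \Vert)} $, into $ K_{d} $; since also $ K_{d} \subset B_{(F, \Vert \cdot \Vert)} \cap F_{d} $ trivially and $ P_{d} x = x $ for $ x \in F_{d} $, we conclude $ B_{(F, \Vert \cdot \Vert)} \cap F_{d} = K_{d} $, which is \eqref{rat07}. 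Finally each $ B_{(F_{j}, |\cdot|_{j})} $ is generated by finitely many points with rational coordinates, hence so is $ K_{d} = B_{(F_{d}, \Vert \cdot \Vert)} $; that is, $ (F_{d}, \Vert \cdot \Vert) $ is rational.

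It remains to deal with the basis $ f_{1}, f_{2}, \dots $ in the new norm. From $ P_{d} \big( B_{(F, \Vert \cdot \Vert)} \big) \subset K_{d} = B_{(F_{d}, \Vert \cdot \Vert)} \subset B_{(F, \Vert \cdot \Vert)} $ we read off $ \Vert P_{d} x \Vert \leq \Vert x \Vert $, so $ f_{1}, f_{2}, \dots $ is monotone in $ (F, \Vert \cdot \Vert) $ — and it is still a Schauder basis, finitely supported vectors being dense and the $ P_{d} $ uniformly bounded. For shrinkingness, if $ e_{1}, e_{2}, \dots $ is shrinking then $ f_{1}, f_{2}, \dots $ is shrinking in $ \ell_{2}(X) $ (from $ \ell_{2}(X)^{*} = \ell_{2}(X^{*}) $ and $ X^{*} = \overline{\mathrm{span}} \{ e_{k}^{*} \} $ one gets $ \ell_{2}(X^{*}) = \overline{\mathrm{span}} \{ f_{i}^{*} \} $), and by the norm equivalence \eqref{rat06} it is then shrinking in $ (F, \Vert \cdot \Vert) $ as well. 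I expect the only genuinely delicate point to be the estimate $ |P_{d} f|_{d} \leq 1 $ in the core step; everything else is soft once the constants in \eqref{rat02} are matched up correctly.
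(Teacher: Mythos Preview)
Your proof is correct and follows essentially the same route as the paper's: the key step in both is the estimate $ |P_{d}f|_{d} \leq (1-2^{-(2d+2)})\Vert P_{d}f\Vert_{\ell_{2}(X)} \leq (1-2^{-(2j+1)})\Vert f\Vert_{\ell_{2}(X)} \leq |f|_{j} $ for $ j>d $, which is exactly what the paper computes. The only organizational difference is that the paper deduces monotonicity of $ (f_{i}) $ in $ (F,\Vert\cdot\Vert) $ directly from the fact that each $ |\cdot|_{d} $ was chosen \emph{monotone} (so each $ P_{m} $ already maps $ B_{(F_{d},|\cdot|_{d})} $ into itself), whereas you obtain monotonicity as a by-product of \eqref{rat07}; both routes work and rest on the same inequality.
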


\begin{proof}
By (\ref{rat02}), we have $ \frac{7}{8} \Vert f \Vert _{\ell _{2}(X)} \leq |f|_{d} \leq \Vert f \Vert _{\ell _{2}(X)} $ for $ f \in F_{d} $. Thus,
$$ B_{(F_{d}, |\cdot|_{d})} \subset \frac{8}{7} B_{\ell _{2}(X)} \quad \textrm{and} \quad B_{\ell _{2}(X)} \cap F_{d} \subset B_{(F_{d}, |\cdot|_{d})} \subset B_{(F, \Vert \cdot \Vert)}, $$
and it follows that
$$ B_{(F, \Vert \cdot \Vert)} \subset \frac{8}{7} B_{\ell _{2}(X)} \quad \textrm{and} \quad B_{\ell _{2}(X)} \subset B_{(F, \Vert \cdot \Vert)}. $$
Clearly, if the basis $ e_{1}, e_{2}, \dots $ is shrinking, then the basis $ f_{1}, f_{2}, \dots $ is shrinking. To show that it is monotone with respect to $ \Vert \cdot \Vert $, it is sufficient to realize that the associated partial sum operators $ P_{1}, P_{2}, \dots $ map the unit ball of $ (F_{d}, |\cdot|_{d}) $ into itself, and consequently that the unit ball of $ (F, \Vert \cdot \Vert) $ has the same property. To show (\ref{rat07}), it is sufficient to prove that $ P_{d} $ maps the unit ball of $ (F_{j}, |\cdot|_{j}) $, where $ j > d $, into the unit ball of $ (F_{d}, |\cdot|_{d}) $. For $ f \in F_{j} $, we can compute
$$ |P_{d}f|_{d} \leq \Big( 1 - \frac{1}{2^{2d+2}} \Big) \Vert P_{d}f \Vert _{\ell _{2}(X)} \leq \Big( 1 - \frac{1}{2^{2j+1}} \Big) \Vert f \Vert _{\ell _{2}(X)} \leq |f|_{j}, $$
which completes the proof.
\end{proof}

\begin{lemma} \label{operT}
We have $ \Vert Tf \Vert _{X} \leq \Vert f \Vert $ for $ f \in F $.
\end{lemma}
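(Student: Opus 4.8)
The plan is to unwind the definitions and reduce everything to the unit ball identity \eqref{rat03}. Since $\Vert Tf \Vert_X$ and $\Vert f \Vert$ are both norms (well, $\Vert T\cdot\Vert_X$ is a seminorm), it suffices to prove that $T$ maps $B_{(F,\Vert\cdot\Vert)}$ into $B_X$; equivalently, by \eqref{rat03}, that $T$ maps each $B_{(F_d,|\cdot|_d)}$ into $B_X$, since $T$ is linear and continuous and $B_X$ is closed and convex, so it will automatically contain the closed convex hull. So the whole lemma reduces to the single estimate $\Vert Tf\Vert_X \le 1$ for $f \in F_d$ with $|f|_d \le 1$.

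First I would record that $T$ and $U$ from \eqref{rat04}, \eqref{rat05} are genuinely bounded: on $\ell_2(X)$ one has $\Vert T(x_1,x_2,\dots)\Vert_X \le \frac{\sqrt3}{2}\sum_{k\ge1} 2^{-(k-1)}\Vert x_k\Vert_X \le \frac{\sqrt3}{2}\bigl(\sum 4^{-(k-1)}\bigr)^{1/2}\Vert(x_k)\Vert_{\ell_2(X)} = \frac{\sqrt3}{2}\cdot\frac{2}{\sqrt3}\Vert(x_k)\Vert_{\ell_2(X)} = \Vert(x_k)\Vert_{\ell_2(X)}$ by Cauchy–Schwarz (the factor $\frac{\sqrt3}{2}$ is chosen precisely so the geometric-series constant $\frac{2}{\sqrt3}$ cancels). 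Thus $\Vert Tf \Vert_X \le \Vert f \Vert_{\ell_2(X)}$ for all $f \in \ell_2(X) = F$. Now for $f \in F_d$ with $|f|_d \le 1$, the left-hand inequality in \eqref{rat02} gives $\bigl(1-2^{-(2d+1)}\bigr)\Vert f\Vert_{\ell_2(X)} \le |f|_d \le 1$, hence $\Vert f\Vert_{\ell_2(X)} \le \bigl(1-2^{-(2d+1)}\bigr)^{-1} $. This is slightly bigger than $1$, which is not quite good enough by itself — so a cruder bound through $\Vert f\Vert_{\ell_2(X)}$ alone fails, and one has to use the monotone structure more carefully.

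The fix is to observe that $T$ restricted to a single ``copy'' of $X$ inside $\ell_2(X)$ is a strict contraction with a small constant, and to combine this with the partial-sum structure built into the rational norms. More precisely, writing $f = (x_1,x_2,\dots) \in F_d$ and noting that only finitely many $x_k$ are nonzero, I would estimate $\Vert Tf\Vert_X = \frac{\sqrt3}{2}\Vert \sum_k 2^{-(k-1)} x_k\Vert_X$. The key point is that $U\circ(\text{something})$ and $T$ are near-inverse: a direct computation gives $TU = \mathrm{id}_X$, and more usefully $U$ maps $B_X$ into $B_{\ell_2(X)}$, so $\Vert Uf\Vert_{\ell_2(X)} = \Vert f\Vert_X$ is false in general — rather $\Vert Uf\Vert_{\ell_2(X)}^2 = \frac{3}{4}\sum 4^{-(k-1)}\Vert f\Vert_X^2 = \Vert f\Vert_X^2$, so $U$ is an isometric embedding of $X$ into $\ell_2(X)$ and $T$ is a norm-one coretraction. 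Given that, the right-hand inequality in \eqref{rat02} is the one that matters: it forces $B_{(F_d,|\cdot|_d)}$ to sit \emph{inside} a slightly shrunk ball, and the composition with $T$, together with the fact that $T$ only has norm $1$ and $Uf$ lands exactly on the diagonal, yields $\Vert Tf\Vert_X \le \bigl(1-2^{-(2d+2)}\bigr)\Vert f\Vert_{\ell_2(X)}$; combined with $|f|_d\le1$ and the \emph{left} inequality of \eqref{rat02} this gives $\Vert Tf\Vert_X \le \frac{1-2^{-(2d+2)}}{1-2^{-(2d+1)}} \le 1$. Finally, since $B_{(F,\Vert\cdot\Vert)}$ is the closed convex hull of the $B_{(F_d,|\cdot|_d)}$ by \eqref{rat03} and $T$ is linear and $\Vert\cdot\Vert_{\ell_2(X)}$-continuous hence $\Vert\cdot\Vert$-continuous by \eqref{rat06}, the estimate extends to all of $B_{(F,\Vert\cdot\Vert)}$, which is the claim.

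I expect the main obstacle to be getting the constants to close: the naive bound $\Vert Tf\Vert_X\le\Vert f\Vert_{\ell_2(X)}$ against $|f|_d$ is off by the factor $\bigl(1-2^{-(2d+1)}\bigr)^{-1}>1$, so one genuinely needs the built-in ``$1-2^{-(2d+2)}$'' slack from the \emph{upper} bound in \eqref{rat02} to beat it. Care is also needed because $T$ is defined on all of $\ell_2(X)=F$ but the rational norms $|\cdot|_d$ only control $F_d$; the passage to $B_{(F,\Vert\cdot\Vert)}$ via closed convex hull and continuity of $T$ is routine but should be stated explicitly.
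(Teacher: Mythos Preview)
Your overall strategy---reduce via \eqref{rat03} to showing $T(B_{(F_d,|\cdot|_d)})\subset B_X$, then extend by convexity and continuity---matches the paper. But the core estimate has two genuine errors.

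First, the numerical inequality you use at the end is false: since $2^{-(2d+2)}<2^{-(2d+1)}$ we have $1-2^{-(2d+2)}>1-2^{-(2d+1)}$, so
\[
\frac{1-2^{-(2d+2)}}{1-2^{-(2d+1)}}>1,
\]
not $\le 1$. (For $d=1$ it equals $15/14$.) So even granting your claimed bound $\Vert Tf\Vert_X\le(1-2^{-(2d+2)})\Vert f\Vert_{\ell_2(X)}$, the argument does not close.

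Second, that claimed bound is itself unjustified. Your paragraph about $U$ being an isometric embedding and $T$ having norm $1$ and ``$Uf$ landing on the diagonal'' does not produce any strict contraction constant for $T$ on $F_d$; nothing in that discussion distinguishes $F_d$ from all of $\ell_2(X)$. You also misread the right-hand inequality in \eqref{rat02}: $|f|_d\le(1-2^{-(2d+2)})\Vert f\Vert_{\ell_2(X)}$ makes $B_{(F_d,|\cdot|_d)}$ \emph{larger} than the $\ell_2(X)$-ball on $F_d$, not smaller.

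The missing idea is the one you skipped over: by the definition of $\pi$, every element of $F_d$ is supported on the first $d$ coordinates of $\ell_2(X)$. Redoing your Cauchy--Schwarz computation with the \emph{truncated} geometric series $\sum_{k=1}^{d}4^{-(k-1)}=\tfrac{4}{3}(1-4^{-d})$ gives, for $f\in F_d$,
\[
\Vert Tf\Vert_X\le\sqrt{1-4^{-d}}\,\Vert f\Vert_{\ell_2(X)}\le\Bigl(1-\frac{1}{2^{2d+1}}\Bigr)\Vert f\Vert_{\ell_2(X)}\le|f|_d,
\]
the last step being exactly the \emph{left} inequality in \eqref{rat02}. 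This yields $\Vert Tf\Vert_X\le|f|_d$ directly, with no ratio of constants to worry about, and is how the paper proceeds.
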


\begin{proof}
For $ n \in \mathbb{N} $ and $ x_{1}, x_{2}, \dots , x_{n} \in X $, we can write
\begin{align*}
\big\Vert T(x_{1}, & \, x_{2}, \dots, x_{n}, 0, 0, \dots ) \big\Vert _{X} = \frac{\sqrt{3}}{2} \Big\Vert x_{1} + \frac{1}{2}x_{2} + \dots + \frac{1}{2^{n-1}}x_{n} \Big\Vert _{X} \\
 & \leq \frac{\sqrt{3}}{2} \Big( \Vert x_{1} \Vert _{X} + \frac{1}{2} \Vert x_{2} \Vert _{X} + \dots + \frac{1}{2^{n-1}} \Vert x_{n} \Vert _{X} \Big) \\
 & \leq \frac{\sqrt{3}}{2} \sqrt{1 + \frac{1}{4} + \dots + \frac{1}{4^{n-1}}} \sqrt{\Vert x_{1} \Vert _{X}^{2} + \Vert x_{2} \Vert _{X}^{2} + \dots + \Vert x_{n} \Vert _{X}^{2} } \\
 & = \sqrt{1 - \frac{1}{4^{n}}} \, \big\Vert (x_{1}, x_{2}, \dots , x_{n}, 0, 0, \dots ) \big\Vert _{\ell _{2}(X)}.
\end{align*}
It follows that
$$ \Vert Tf \Vert _{X} \leq \sqrt{1 - \frac{1}{4^{d}}} \, \Vert f \Vert _{\ell _{2}(X)}, \quad f \in F_{d}, \; d \in \mathbb{N}, $$
as the elements of $ F_{d} $ are supported by the first $ d $ coordinates (obviously from the definition of $ \pi $).

Now, given $ d \in \mathbb{N} $, we obtain for $ f \in F_{d} $ that
$$ \Vert Tf \Vert _{X} \leq \sqrt{1 - \frac{1}{4^{d}}} \, \Vert f \Vert _{\ell _{2}(X)} \leq \Big( 1 - \frac{1}{2^{2d+1}} \Big) \Vert f \Vert _{\ell _{2}(X)} \leq |f|_{d}. $$
Therefore, the unit ball of $ (F_{d}, |\cdot|_{d}) $, where $ d \in \mathbb{N} $, and consequently the unit ball of $ (F, \Vert \cdot \Vert) $, are subsets of $ \{ f \in F : \Vert Tf \Vert _{X} \leq 1 \} $.
\end{proof}

\begin{lemma} \label{operU}
We have $ \Vert Ux \Vert = \Vert x \Vert _{X} $ for $ x \in X $ and the range of $ U $ is $ 1 $-complemented in $ (F, \Vert \cdot \Vert) $.
\end{lemma}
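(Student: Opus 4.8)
The plan is to first establish that $ U $ is an isometric embedding, by sandwiching $ \Vert Ux \Vert $ between two copies of $ \Vert x \Vert_{X} $, and then to exhibit the desired projection in the explicit form $ UT $.

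For the isometry, I would begin with the elementary computation
\[
\Vert Ux \Vert_{\ell_{2}(X)}^{2} = \frac{3}{4} \sum_{n=0}^{\infty} 4^{-n} \Vert x \Vert_{X}^{2} = \Vert x \Vert_{X}^{2},
\]
which shows in particular that $ Ux \in F $ and, combined with the estimate $ \Vert \cdot \Vert \leq \Vert \cdot \Vert_{\ell_{2}(X)} $ from \eqref{rat06}, yields $ \Vert Ux \Vert \leq \Vert x \Vert_{X} $. For the reverse inequality the key point is the identity $ TUx = x $, again a geometric-series computation: $ TUx = \frac{3}{4} \sum_{n=0}^{\infty} 4^{-n} x = x $. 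Applying Lemma~\ref{operT} to $ f = Ux \in F $ then gives $ \Vert x \Vert_{X} = \Vert TUx \Vert_{X} \leq \Vert Ux \Vert $, so that $ \Vert Ux \Vert = \Vert x \Vert_{X} $ for every $ x \in X $. Since $ U $ is thus a linear isometry and $ X $ is complete, its range is a closed subspace of $ (F, \Vert \cdot \Vert) $.

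For the complementation, I would check that $ UT : F \to F $ is a projection onto the range of $ U $. Idempotency follows at once from $ TU = \mathrm{id}_{X} $, since $ (UT)(UT) = U(TU)T = UT $; moreover $ \mathrm{range}(UT) \subset \mathrm{range}(U) $ trivially, while $ UT(Ux) = U(TUx) = Ux $ gives the reverse inclusion, so $ \mathrm{range}(UT) = \mathrm{range}(U) $ and $ UT $ is the identity on it. Finally, for every $ f \in F $ we have $ \Vert UTf \Vert = \Vert Tf \Vert_{X} \leq \Vert f \Vert $ by Lemma~\ref{operT} (the first equality being the isometry of $ U $ just proved), hence $ \Vert UT \Vert \leq 1 $ and the range of $ U $ is $ 1 $-complemented in $ (F, \Vert \cdot \Vert) $.

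I do not expect a genuine obstacle here: the whole argument rests on the two elementary identities $ \Vert Ux \Vert_{\ell_{2}(X)} = \Vert x \Vert_{X} $ and $ TU = \mathrm{id}_{X} $, together with the bound $ \Vert Tf \Vert_{X} \leq \Vert f \Vert $ already furnished by Lemma~\ref{operT}. The only mild subtlety is confirming that $ Ux $ genuinely lies in $ F $ so that the compositions are legitimate, and this is taken care of by the norm equivalence \eqref{rat06}.
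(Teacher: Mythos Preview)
Your argument is correct and follows essentially the same route as the paper's proof: both establish the isometry via the sandwich $\Vert x\Vert_X = \Vert TUx\Vert_X \le \Vert Ux\Vert \le \Vert Ux\Vert_{\ell_2(X)} = \Vert x\Vert_X$, invoking Lemma~\ref{operT} and the norm comparison \eqref{rat06}, and then use $UT$ as the norm-one projection onto $UX$. You have simply spelled out the geometric-series identities $TU=\mathrm{id}_X$ and $\Vert Ux\Vert_{\ell_2(X)}=\Vert x\Vert_X$ that the paper leaves as ``easily shown''.
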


\begin{proof}
It can be easily shown that
$$ TUx = x \quad \textrm{and} \quad \Vert Ux \Vert _{\ell _{2}(X)} = \Vert x \Vert _{X} $$
for $ x \in X $. Using Lemma \ref{operT} and Lemma \ref{ratlemma}, we can write
$$ \Vert x \Vert _{X} = \Vert TUx \Vert_{X} \leq \Vert Ux \Vert \leq \Vert Ux \Vert_{\ell _{2}(X)} = \Vert x \Vert _{X}, \quad x \in X. $$
Moreover, $ UT : F \to F $ is a projection onto $ UX $ with $ \Vert UT \Vert \leq 1 $.
\end{proof}

The proof of Proposition~\ref{rational} is completed. Nevertheless, we prove here one more lemma which will be useful later.

\begin{lemma} \label{furthlemma}
We have
$$ \Vert f \Vert \geq \Vert P_{n}f \Vert + \frac{1}{2^{2n+4}} \Vert f - P_{n}f \Vert, \quad f \in F, \; n \in \mathbb{N}. $$ 
\end{lemma}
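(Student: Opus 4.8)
The plan is to pass from the unit ball of $(F,\Vert\cdot\Vert)$ to the generating balls $B_{(F_{d},|\cdot|_{d})}$, and then to reduce the whole statement to the numerical constants appearing in \eqref{rat02}. Put $g(f)=\Vert P_{n}f\Vert+2^{-2n-4}\Vert f-P_{n}f\Vert$, so that the assertion is exactly $g(f)\le\Vert f\Vert$; since $g$ and $\Vert\cdot\Vert$ are both positively homogeneous, it suffices to prove $g(f)\le1$ whenever $\Vert f\Vert\le1$. Now $g$ is a continuous seminorm on $F$ (a sum of two norms composed with the bounded operators $P_{n}$ and $I-P_{n}$), so $C=\{f\in F:g(f)\le1\}$ is closed and convex. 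By \eqref{rat03}, $B_{(F,\Vert\cdot\Vert)}$ is the closed convex hull of $\bigcup_{d}B_{(F_{d},|\cdot|_{d})}$, hence it is enough to verify $B_{(F_{d},|\cdot|_{d})}\subseteq C$ for each $d$ — equivalently, using homogeneity once more, $g(f)\le|f|_{d}$ for every $f\in F_{d}$. I would also note at the outset the inclusion $B_{(F_{d},|\cdot|_{d})}\subseteq B_{(F,\Vert\cdot\Vert)}$ coming from \eqref{rat03}, i.e.\ $\Vert h\Vert\le|h|_{d}$ for $h\in F_{d}$.

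Fix $d$ and $f\in F_{d}$. If $n\ge d$, then $P_{n}f=f$, so $g(f)=\Vert f\Vert\le|f|_{d}$ and there is nothing more to do. If $n<d$, I would estimate the two terms of $g(f)$ separately. Since $P_{n}f\in F_{n}$, combining $\Vert P_{n}f\Vert\le|P_{n}f|_{n}$ with the right-hand inequality of \eqref{rat02} (applied with $n$ in place of $d$) and the monotonicity of the basis $f_{1},f_{2},\dots$ in $\ell_{2}(X)$ gives
$$ \Vert P_{n}f\Vert\le|P_{n}f|_{n}\le\Big(1-\frac{1}{2^{2n+2}}\Big)\Vert P_{n}f\Vert_{\ell_{2}(X)}\le\Big(1-\frac{1}{2^{2n+2}}\Big)\Vert f\Vert_{\ell_{2}(X)}. $$
For the tail, \eqref{rat06}, the triangle inequality in $\ell_{2}(X)$ and the same monotonicity yield $\Vert f-P_{n}f\Vert\le\Vert f-P_{n}f\Vert_{\ell_{2}(X)}\le2\Vert f\Vert_{\ell_{2}(X)}$. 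Adding the two estimates, $g(f)\le\big(1-2^{-2n-2}+2\cdot2^{-2n-4}\big)\Vert f\Vert_{\ell_{2}(X)}=\big(1-2^{-2n-3}\big)\Vert f\Vert_{\ell_{2}(X)}$; and the left-hand inequality of \eqref{rat02}, together with $2n+3\le2d+1$, gives $g(f)\le(1-2^{-2n-3})(1-2^{-2d-1})^{-1}|f|_{d}\le|f|_{d}$, as wanted.

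I do not foresee a real obstacle: the argument is essentially bookkeeping built from inequalities already at hand. The two points needing a little care are the reduction step — one must observe that $\{g\le1\}$ is closed and convex so that it is enough to test it on the generators of $B_{(F,\Vert\cdot\Vert)}$ furnished by \eqref{rat03} — and the final arithmetic, where the exponent $2n+4$ in the statement is exactly what makes $1-2^{-2n-2}+2^{-2n-3}=1-2^{-2n-3}$ no larger than $1-2^{-2d-1}$ for every $d>n$. Everything else is a direct reuse of \eqref{rat02}, \eqref{rat06}, and the inclusions established in the proof of Lemma~\ref{ratlemma}.
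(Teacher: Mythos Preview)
Your proof is correct and follows essentially the same route as the paper's: reduce via \eqref{rat03} to checking $g(f)\le|f|_{d}$ on each $F_{d}$, dispose of the case $d\le n$ trivially, and for $d>n$ combine $\Vert P_{n}f\Vert\le|P_{n}f|_{n}\le(1-2^{-2n-2})\Vert f\Vert_{\ell_{2}(X)}$ with $\Vert f-P_{n}f\Vert\le 2\Vert f\Vert_{\ell_{2}(X)}$ and the left inequality of \eqref{rat02}. The only difference is cosmetic bookkeeping in the final arithmetic---the paper splits off a factor $(1-2^{-2d-1})$ from $\Vert P_{n}f\Vert$ before estimating, while you bound everything by $\Vert f\Vert_{\ell_{2}(X)}$ first and divide by $(1-2^{-2d-1})$ at the end---but the inequalities used and the numerical outcome are identical.
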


\begin{proof}
Similarly as in the proof of Lemma~\ref{operT}, it is sufficient to show that the unit ball of $ (F_{d}, |\cdot|_{d}) $, where $ d \in \mathbb{N} $, is a subset of $ \{ f \in F : \Vert P_{n}f \Vert + \frac{1}{2^{2n+4}} \Vert f - P_{n}f \Vert \leq 1 \} $. So, we just need to check that
$$ \Vert P_{n}f \Vert + \frac{1}{2^{2n+4}} \Vert f - P_{n}f \Vert \leq |f|_{d}, \quad f \in F_{d}. $$
The inequality is clear when $ d \leq n $, as $ P_{n}f = f $. If $ d \geq n+1 $, then
$$ \Vert P_{n}f \Vert \leq |P_{n}f|_{n} \leq \Big( 1 - \frac{1}{2^{2n+2}} \Big) \Vert P_{n}f \Vert _{\ell _{2}(X)} \leq \Big( 1 - \frac{1}{2^{2n+2}} \Big) \Vert f \Vert _{\ell _{2}(X)}, $$
and so
\begin{align*}
\Vert & P_{n}f \Vert + \frac{1}{2^{2n+4}} \Vert f - P_{n}f \Vert \\
 & = \Big( 1 - \frac{1}{2^{2d+1}} \Big) \Vert P_{n}f \Vert + \frac{1}{2^{2d+1}} \Vert P_{n}f \Vert + \frac{1}{2^{2n+4}} \Vert f - P_{n}f \Vert \\
 & \leq \Big( 1 - \frac{1}{2^{2d+1}} \Big) \Big( 1 - \frac{1}{2^{2n+2}} \Big) \Vert f \Vert _{\ell _{2}(X)} + \frac{1}{2^{2n+3}} \Vert f \Vert + \frac{1}{2^{2n+4}} \cdot 2 \Vert f \Vert \\
 & \leq \Big( 1 - \frac{1}{2^{2n+2}} \Big) |f|_{d} + \frac{1}{2^{2n+3}} |f|_{d} + \frac{1}{2^{2n+4}} \cdot 2 |f|_{d} \\ 
 & = |f|_{d}
\end{align*}
for every $ f \in F_{d} $.
\end{proof}

\section{Renorming I} \label{sec:renormingI}

For the class of reflexive spaces and the class of spaces with a shrinking basis, the construction of the space $ F $ from Definition~\ref{ratconstr} is satisfactory. For the other two classes from Theorem~\ref{thmmain}, the space $ F $ needs to be renormed in a way such that the relevant isometric properties of the initial space $ X $ are preserved.

In fact, we renorm the space in two steps (renormings $ \Vert \cdot \Vert_{I} $ and $ \Vert \cdot \Vert_{II} $). For the class of non-universal spaces, one renorming is sufficient. For the class of strictly convex spaces, one more renorming is needed.

Let us accentuate two aspects of the renormings. Firstly, the new norm on the subspace $ F_{d} = \mathrm{span} \{ f_{1}, \dots, f_{d} \} $ depends only on the old norm on $ F_{d} $ itself. In this way, only countably many possibilities for the norm of $ F_{d} $ may occur. Secondly, the norm is not changed on the subspace $ UX $ which is still an $ 1 $-complemented copy of $ X $.

\begin{definition}
We define a seminorm
\begin{equation} \label{renI01}
\beta(f)^{2} = \sum_{n=1}^{\infty} \sum_{k=1}^{\infty} \frac{1}{2^{4\pi^{-1}(n+1,k)}} |e^{*}_{(n,k)}(f)-2e^{*}_{(n+1,k)}(f)|^{2}, \quad f \in F,
\end{equation}
where $ e^{*}_{(n,k)} $ is the system biorthogonal with the basic system $ e_{(n,k)} $.
\end{definition}

The proof of the following observation is skipped.

\begin{fact} \label{betanull}
For an $ f \in F $, the following assertions are equivalent:

{\rm (i)} $ \beta(f) = 0 $,

{\rm (ii)} $ e^{*}_{(n,k)}(f)-2e^{*}_{(n+1,k)}(f) = 0 $ for all $ n, k $,

{\rm (iii)} $ f \in UX $.
\end{fact}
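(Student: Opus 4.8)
The plan is to prove the equivalence of the three assertions cyclically, (iii)$\Rightarrow$(ii)$\Rightarrow$(i)$\Rightarrow$(iii), where the first two implications are immediate and all the content lies in the last one. First I would check (iii)$\Rightarrow$(ii): for $f = Ux$ with $x = \sum_k a_k e_k \in X$, the definition \eqref{rat05} of $U$ gives $e^{*}_{(n,k)}(f) = \frac{\sqrt{3}}{2} \cdot 2^{-(n-1)} a_k$, so $e^{*}_{(n,k)}(f) - 2 e^{*}_{(n+1,k)}(f) = \frac{\sqrt{3}}{2}(2^{-(n-1)} - 2\cdot 2^{-n}) a_k = 0$ for all $n,k$. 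The implication (ii)$\Rightarrow$(i) is trivial from the definition \eqref{renI01}, since every summand of $\beta(f)^2$ vanishes.

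The substance is (i)$\Rightarrow$(iii). Suppose $\beta(f) = 0$. Since all the coefficients $2^{-4\pi^{-1}(n+1,k)}$ are strictly positive, each summand must vanish, which is exactly assertion (ii): $e^{*}_{(n,k)}(f) = 2 e^{*}_{(n+1,k)}(f)$ for all $n,k$. Writing $a_k = e^{*}_{(1,k)}(f)$, we get by induction on $n$ that $e^{*}_{(n,k)}(f) = 2^{-(n-1)} a_k$ for every $n$ and $k$. Thus, comparing with the formula for $U$ above, the scalar $x$ defined (formally) by $x = \frac{2}{\sqrt{3}} \sum_k a_k e_k$ would satisfy $f = Ux$ \emph{provided} $x$ is a well-defined element of $X$, i.e. provided the series $\sum_k a_k e_k$ converges in $X$. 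This convergence is the only real point to argue.

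To establish that $x \in X$, I would use that $f$ is a genuine element of $F$ and that $T : F \to X$ is a bounded operator (Lemma~\ref{operT}). Consider the partial sum $f_N \in F$ obtained by truncating $f$ to coordinates $(n,k)$ with $k \leq N$ (equivalently, $f_N = Q_N f$ where $Q_N$ is the norm-one projection onto $\overline{\mathrm{span}}\{ e_{(n,k)} : k \leq N\}$, which is a legitimate bounded projection since it is a product of partial-sum-type projections with respect to the monotone basis, after a suitable reindexing — alternatively one can work coordinatewise and appeal to the boundedness of the coordinate functionals). Then $T f_N = \frac{\sqrt 3}{2}\sum_n 2^{-(n-1)} \big(\sum_{k\le N} 2^{-(n-1)}a_k e_k\big)$... more cleanly: applying the identity $TU = \mathrm{id}$ in reverse, one checks directly that $T$ applied to the ``column'' structure forced by (ii) recovers $\frac{2}{\sqrt 3}\cdot\frac{\sqrt3}{2}\sum_{k} a_k e_k$ up to the normalizing constant; concretely, since $e^{*}_{(n,k)}(f) = 2^{-(n-1)}a_k$, we have $T f = \frac{\sqrt3}{2}\sum_{n=1}^{\infty} 2^{-(n-1)}\big(\sum_k 2^{-(n-1)} a_k e_k\big) = \frac{\sqrt3}{2}\big(\sum_{n} 4^{-(n-1)}\big)\sum_k a_k e_k = \frac{2}{\sqrt3}\sum_k a_k e_k$, and since $Tf \in X$ by Lemma~\ref{operT}, the series $\sum_k a_k e_k$ converges in $X$. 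Hence $x := \frac{2}{\sqrt3} Tf \in X$ is well defined and $f = Ux \in UX$.

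The main obstacle is precisely this last convergence issue: from $\beta(f)=0$ one only obtains the \emph{formal} coordinates of a candidate preimage under $U$, and one must use the a priori membership $f \in F$ together with the boundedness of $T$ (and the fact that reindexing $\mathbb N^2$ by $\pi$ does not destroy the relevant projection bounds) to upgrade this to honest convergence in $X$. Once that is settled, identifying $f$ with $Ux$ is a bookkeeping matter comparing the coefficients dictated by (ii) with the explicit formula \eqref{rat05} for $U$. I would remark, as the paper does, that the detailed verification is routine and may be omitted, but the mechanism is as above.
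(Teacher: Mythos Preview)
The paper skips the proof of this fact entirely, so there is no approach to compare against. Your argument is correct in substance, but you have manufactured a difficulty that is not there.

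The ``obstacle'' you identify --- convergence of $\sum_k a_k e_k$ in $X$ --- is immediate once you remember that $F = \ell_2(X)$ as a set (Lemma~\ref{ratlemma}). Writing $f = (f_1, f_2, \dots)$ with each $f_n \in X$, one has $a_k = e^*_{(1,k)}(f) = e_k^*(f_1)$, so $\sum_k a_k e_k$ is nothing but the Schauder expansion of $f_1$, which converges because $f_1 \in X$. More directly: condition (ii) reads $e_k^*(f_n - 2f_{n+1}) = 0$ for all $k$, hence $f_n = 2f_{n+1}$ (the coordinate functionals separate points of $X$), so $f_n = 2^{-(n-1)} f_1$ for all $n$ and therefore $f = U\big(\tfrac{2}{\sqrt{3}} f_1\big) \in UX$. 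No appeal to $T$ is needed.

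Your detour through $T$ is not wrong, but as written the computation of $Tf$ via the double sum already presupposes that the inner sum $\sum_k 2^{-(n-1)} a_k e_k$ converges in $X$ --- which is the very thing you claim to be deducing. It does converge, for the reason just given, so the argument is not genuinely circular; it is just that by the time the interchange of sums is justified, the conclusion is already in hand without $T$. One small arithmetic slip: since $TU = \mathrm{id}_X$, the correct identification at the end is $x = Tf$, not $x = \tfrac{2}{\sqrt{3}} Tf$.
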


\begin{lemma} \label{betabound}
Let $ d \in \mathbb{N} \cup \{ 0 \} $. Then every $ f \in \overline{\mathrm{span}} \, \{ f_{d+1}, f_{d+2}, \dots \} $ satisfies
$$ \beta(f) \leq \frac{2}{2^{2d}} \Vert f \Vert. $$
In particular, $ \beta(f) \leq 2 \Vert f \Vert $ for every $ f \in F $.
\end{lemma}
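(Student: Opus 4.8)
The plan is to estimate $\beta(f)$ summand by summand in (\ref{renI01}), which reduces the whole inequality to a uniform bound on the coordinate functionals together with a counting observation about the bijection $\pi$.

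First I would record that $|e^{*}_{(n,k)}(f)| \leq \frac{16}{7}\Vert f \Vert$ for every $f \in F$ and all $n,k$. Writing $f = (x_{1}, x_{2}, \dots) \in \ell_{2}(X)$, we have $e^{*}_{(n,k)}(f) = e^{*}_{k}(x_{n})$, and since $e_{1}, e_{2}, \dots$ is a normalized monotone basis of $X$ one has $|e^{*}_{k}(x_{n})| \leq 2\Vert x_{n} \Vert_{X} \leq 2\Vert f \Vert_{\ell_{2}(X)}$; now invoke $\Vert f \Vert_{\ell_{2}(X)} \leq \frac{8}{7}\Vert f \Vert$ from (\ref{rat06}). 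Hence
$$ |e^{*}_{(n,k)}(f) - 2e^{*}_{(n+1,k)}(f)| \leq |e^{*}_{(n,k)}(f)| + 2|e^{*}_{(n+1,k)}(f)| \leq \frac{48}{7}\Vert f \Vert . $$
Moreover, if $f \in \overline{\mathrm{span}} \, \{ f_{d+1}, f_{d+2}, \dots \}$, then, since the continuous coordinate functionals $e^{*}_{\pi(i)}$ annihilate $f_{d+1}, f_{d+2}, \dots$ whenever $i \leq d$, we get $e^{*}_{\pi(i)}(f) = 0$ for all $i \leq d$.

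The combinatorial point is the following. By the definition of $\pi$, the pair $(n+1,k)$ lies on the antidiagonal $n{+}1{+}k$, which is enumerated strictly after the antidiagonal $n{+}k$ containing $(n,k)$; hence $\pi^{-1}(n+1,k) > \pi^{-1}(n,k)$ for all $n,k$, and $(n,k) \mapsto \pi^{-1}(n+1,k)$ is injective. Fix $f \in \overline{\mathrm{span}} \, \{ f_{d+1}, f_{d+2}, \dots \}$. A summand of (\ref{renI01}) indexed by $(n,k)$ can be nonzero only if $e^{*}_{(n,k)}(f) \neq 0$ or $e^{*}_{(n+1,k)}(f) \neq 0$, i.e. only if $\pi^{-1}(n,k) \geq d+1$ or $\pi^{-1}(n+1,k) \geq d+1$; by the strict monotonicity just noted, in either case $\pi^{-1}(n+1,k) \geq d+1$. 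So the exponents $\pi^{-1}(n+1,k)$ appearing in the nonzero summands are pairwise distinct integers $\geq d+1$, whence $\sum \frac{1}{2^{4\pi^{-1}(n+1,k)}} \leq \sum_{i \geq d+1} \frac{1}{2^{4i}} = \frac{1}{15 \cdot 2^{4d}}$, the left-hand sum being over those $(n,k)$ producing a nonzero summand.

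Combining these gives
$$ \beta(f)^{2} \leq \Big( \frac{48}{7}\Vert f \Vert \Big)^{2} \cdot \frac{1}{15 \cdot 2^{4d}} = \frac{2304}{735}\cdot\frac{\Vert f \Vert^{2}}{2^{4d}} < \frac{4\Vert f \Vert^{2}}{2^{4d}} = \Big( \frac{2}{2^{2d}} \Big)^{2}\Vert f \Vert^{2} , $$
and taking square roots finishes the proof; the ``in particular'' assertion is the case $d=0$, where $\overline{\mathrm{span}} \, \{ f_{1}, f_{2}, \dots \} = F$ and the condition on the exponents is automatic. The only step requiring any care is the bookkeeping with $\pi$ — that the exponents occurring are distinct and bounded below by $d+1$ — and it is entirely elementary; the remaining estimates are routine.
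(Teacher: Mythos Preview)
Your proof is correct and follows essentially the same route as the paper: bound each coordinate functional by $2\cdot\tfrac{8}{7}\Vert f\Vert$ via \eqref{rat06}, observe that the nonzero summands in \eqref{renI01} must have $\pi^{-1}(n+1,k)>d$ (using $\pi^{-1}(n+1,k)>\pi^{-1}(n,k)$), and sum the resulting geometric series. The only cosmetic difference is that the paper relaxes the constant $\tfrac{48}{7}$ to $2\sqrt{15}$ so that the final arithmetic lands exactly on $4$, whereas you keep the sharper constant and note that $\tfrac{2304}{735}<4$.
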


\begin{proof}
We can compute
\begin{equation} \label{renI02}
|e^{*}_{(n,k)}(f)| \leq \Vert e^{*}_{(n,k)} \Vert_{\ell _{2}(X)} \Vert f \Vert_{\ell _{2}(X)} \leq 2 \cdot \frac{8}{7} \Vert f \Vert,
\end{equation}
\begin{equation} \label{renI03}
|e^{*}_{(n,k)}(f)-2e^{*}_{(n+1,k)}(f)| \leq 3 \cdot 2 \cdot \frac{8}{7} \Vert f \Vert \leq 2 \sqrt{15} \Vert f \Vert.
\end{equation}
Moreover, we obtain from $ f \in \overline{\mathrm{span}} \, \{ f_{d+1}, f_{d+2}, \dots \} $ that
$$ \pi^{-1}(n,k) \leq d \quad \Rightarrow \quad e^{*}_{(n,k)}(f) = 0, $$
and consequently
$$ \pi^{-1}(n+1,k) \leq d \quad \Rightarrow \quad e^{*}_{(n,k)}(f)-2e^{*}_{(n+1,k)}(f) = 0. $$
Therefore,
\begin{align*}
\beta(f)^{2} & \leq \sum_{\pi^{-1}(n+1,k) > d} \frac{1}{2^{4\pi^{-1}(n+1,k)}} \cdot \big( 2 \sqrt{15} \Vert f \Vert \big)^{2} \\
 & \leq \sum_{j > d} \frac{1}{2^{4j}} \cdot 4 \cdot 15 \Vert f \Vert^{2} = \frac{4}{2^{4d}} \Vert f \Vert^{2},
\end{align*}
which proves the lemma.
\end{proof}

\begin{definition}
We define
\begin{equation} \label{renI04}
\Vert f \Vert_{I}^{2} = \Vert f \Vert^{2} + \frac{1}{2^{7}} \beta(f)^{2}, \quad f \in F.
\end{equation}
We note that a simple application of Lemma~\ref{betabound} gives
\begin{equation} \label{renI05}
\Vert f \Vert \leq \Vert f \Vert_{I} \leq 2 \Vert f \Vert.
\end{equation}
\end{definition}

\begin{lemma} \label{operUI}
We have $ \Vert Ux \Vert_{I} = \Vert x \Vert _{X} $ for $ x \in X $ and the range of $ U $ is $ 1 $-complemented in $ (F, \Vert \cdot \Vert_{I}) $.
\end{lemma}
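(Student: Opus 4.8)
The plan is to exploit the fact, recorded in Fact~\ref{betanull}, that the seminorm $ \beta $ vanishes precisely on $ UX $. This means that the passage from $ \Vert \cdot \Vert $ to $ \Vert \cdot \Vert_{I} $ leaves the geometry of $ UX $ completely untouched, so that both assertions can be read off from the corresponding statements for the norm $ \Vert \cdot \Vert $ obtained in Lemma~\ref{operU}, together with the comparison \eqref{renI05}.

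First I would prove the isometry. Let $ x \in X $. Then $ Ux \in UX $, so Fact~\ref{betanull} gives $ \beta(Ux) = 0 $. Substituting into the definition \eqref{renI04} yields $ \Vert Ux \Vert_{I}^{2} = \Vert Ux \Vert^{2} $, and Lemma~\ref{operU} already provides $ \Vert Ux \Vert = \Vert x \Vert_{X} $. Hence $ \Vert Ux \Vert_{I} = \Vert x \Vert_{X} $.

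Next I would treat the complementation. By Lemma~\ref{operU}, the operator $ UT : F \to F $ is a projection with range $ UX $ satisfying $ \Vert UTf \Vert \leq \Vert f \Vert $ for every $ f \in F $. Since $ UTf \in UX $, Fact~\ref{betanull} gives $ \beta(UTf) = 0 $, whence $ \Vert UTf \Vert_{I} = \Vert UTf \Vert $. Combining this with $ \Vert UTf \Vert \leq \Vert f \Vert $ and the left-hand inequality of \eqref{renI05}, I obtain $ \Vert UTf \Vert_{I} = \Vert UTf \Vert \leq \Vert f \Vert \leq \Vert f \Vert_{I} $. Thus $ UT $ is a norm-one projection of $ (F, \Vert \cdot \Vert_{I}) $ onto $ UX $, so $ UX $ is $ 1 $-complemented in $ (F, \Vert \cdot \Vert_{I}) $.

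There is essentially no obstacle in this argument; the only point worth stressing is that the renorming was designed so that $ \beta $ annihilates $ UX $, which is exactly what makes the norm on $ UX $ (and hence the range of the projection) unchanged, and what lets the operator norm bound transfer along $ \Vert \cdot \Vert \leq \Vert \cdot \Vert_{I} $.
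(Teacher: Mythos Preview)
Your proof is correct and follows exactly the same approach as the paper: both use Fact~\ref{betanull} to conclude $\Vert Ux \Vert_{I} = \Vert Ux \Vert = \Vert x \Vert_{X}$ via Lemma~\ref{operU}, and both verify that the same projection $UT$ remains norm-one by the chain $\Vert UTf \Vert_{I} = \Vert UTf \Vert \leq \Vert f \Vert \leq \Vert f \Vert_{I}$. The paper's version is simply more terse.
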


\begin{proof}
Using Fact~\ref{betanull} and Lemma~\ref{operU}, we can write $ \Vert Ux \Vert_{I} = \Vert Ux \Vert = \Vert x \Vert _{X} $ for $ x \in X $. The projection $ UT $ works as well as in the proof of Lemma~\ref{operU}, because $ \Vert UTf \Vert_{I} = \Vert UTf \Vert \leq \Vert f \Vert \leq \Vert f \Vert_{I} $ for $ f \in F $.
\end{proof}

\begin{lemma} \label{linesegmI}
Let $ [u, v] $ be a non-degenerate line segment in $ F $ such that $ \Vert \cdot \Vert_{I} $ is constant on $ [u, v] $. Then $ v - u \in UX $.
\end{lemma}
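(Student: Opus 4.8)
The plan is to exploit the fact that the perturbation term in \eqref{renI04} is a \emph{nonnegative quadratic form}. Indeed, the right-hand side of \eqref{renI01} exhibits $\beta(f)^{2}$ as a sum of squares of the linear functionals $f \mapsto e^{*}_{(n,k)}(f) - 2e^{*}_{(n+1,k)}(f)$ with positive coefficients, and by Lemma~\ref{betabound} this series converges for every $f \in F$. Hence, setting $q := \beta(\cdot)^{2}$ and $b(f,g) := \sum_{n,k} \tfrac{1}{2^{4\pi^{-1}(n+1,k)}} \big(e^{*}_{(n,k)}(f)-2e^{*}_{(n+1,k)}(f)\big)\big(e^{*}_{(n,k)}(g)-2e^{*}_{(n+1,k)}(g)\big)$, the bilinear form $b$ is well defined (by the Cauchy--Schwarz inequality and Lemma~\ref{betabound}), symmetric, and satisfies $b(f,f) = q(f)$. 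Consequently, for any $u, v \in F$ the function
$$ t \longmapsto q\big(u + t(v-u)\big) = q(u) + 2t\, b(u, v-u) + t^{2}\, q(v-u), \qquad t \in \mathbb{R}, $$
is a real polynomial of degree at most two whose coefficient of $t^{2}$ is $q(v-u) = \beta(v-u)^{2} \geq 0$.

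Now assume $\Vert\cdot\Vert_{I}$ is constant, say $\equiv r$, on the non-degenerate segment $[u,v]$, and put $x_{t} = u + t(v-u)$. From \eqref{renI04},
$$ \Vert x_{t} \Vert^{2} = r^{2} - \tfrac{1}{2^{7}}\, q(x_{t}), \qquad t \in [0,1], $$
so on the non-degenerate interval $[0,1]$ the function $t \mapsto \Vert x_{t}\Vert^{2}$ coincides with a polynomial of degree at most two whose leading coefficient equals $-2^{-7}\beta(v-u)^{2} \leq 0$. On the other hand, $t \mapsto \Vert x_{t}\Vert$ is convex (triangle inequality) and nonnegative, hence $t \mapsto \Vert x_{t}\Vert^{2}$ is convex on $\mathbb{R}$, in particular on $[0,1]$; and a quadratic polynomial that is convex on a non-degenerate interval has nonnegative leading coefficient. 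Therefore $\beta(v-u)^{2} = 0$, that is $\beta(v-u) = 0$, and Fact~\ref{betanull} yields $v - u \in UX$, as desired.

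I do not expect a genuine obstacle here. The only point that needs a little care is the assertion that $\beta(\cdot)^{2}$ really is the diagonal of a bilinear form on all of $F$ — i.e. that the defining series converges absolutely and depends bilinearly on its arguments — which is exactly what the estimates \eqref{renI02}--\eqref{renI03} underlying Lemma~\ref{betabound} provide. Everything else is the elementary remark that a convex function agreeing with a quadratic polynomial on a non-degenerate interval forces that polynomial to be convex.
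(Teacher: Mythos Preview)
Your proof is correct but takes a different route from the paper's. The paper mirrors the argument of Lemma~\ref{linesegm}: expanding $2\|u\|_I^2+2\|v\|_I^2-4\|w\|_I^2=0$ (with $w=\tfrac12(u+v)$) into four nonnegative summands yields in particular $\beta(u)=\beta(v)$ and $\beta(u+v)=\beta(u)+\beta(v)$; equality in the triangle inequality for the Hilbert seminorm $\beta$ then forces $e^{*}_{(n,k)}(u)-2e^{*}_{(n+1,k)}(u)=e^{*}_{(n,k)}(v)-2e^{*}_{(n+1,k)}(v)$ for all $n,k$, whence $v-u\in UX$ by Fact~\ref{betanull}. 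Your argument is more direct: it uses only that $\beta(\cdot)^2$ is a positive semidefinite quadratic form, so that on $[0,1]$ the function $t\mapsto\|x_t\|^2=r^2-2^{-7}\beta(x_t)^2$ is a quadratic polynomial that must simultaneously be convex (as the square of a nonnegative convex function), forcing its leading coefficient $-2^{-7}\beta(v-u)^2$ to vanish. This avoids both the parallelogram bookkeeping and the equality-case analysis for $\beta$; the paper's version, on the other hand, has the virtue of being a verbatim repetition of the template already established in Lemma~\ref{linesegm}.
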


\begin{proof}
By the same argument as in the proof of Lemma~\ref{linesegm}, we arrive at
$$ \Vert u \Vert = \Vert v \Vert, \quad \Vert u+v \Vert = \Vert u \Vert + \Vert v \Vert, $$
$$ \beta(u) = \beta(v), \quad \beta(u+v) = \beta(u) + \beta(v), $$
and consequently
$$ e^{*}_{(n,k)}(u)-2e^{*}_{(n+1,k)}(u) = e^{*}_{(n,k)}(v)-2e^{*}_{(n+1,k)}(v), \quad n, k \in \mathbb{N}. $$
It follows that $ v - u \in UX $ by Fact~\ref{betanull}.
\end{proof}

\begin{proposition} \label{nonuniv}
If $ X $ is not isometrically universal for all separable Banach spaces, then $ (F, \Vert \cdot \Vert_{I}) $ is also non-universal.
\end{proposition}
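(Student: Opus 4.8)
The plan is to argue by contradiction, imitating the proof of Proposition~\ref{nurotB}(a) with Lemma~\ref{linesegmI} playing the role that Lemma~\ref{linesegm} played there. So suppose that $ (F, \Vert \cdot \Vert_{I}) $ is isometrically universal for all separable Banach spaces. Following that proof, put
$$ \Delta = \{ 0, 1 \}^{\mathbb{N}}, \quad \Delta(i) = \{ \gamma \in \Delta : \gamma(1) = i \}, \quad Z = C(\Delta), \quad Z(i) = \{ h \in Z : \gamma \notin \Delta(i) \Rightarrow h(\gamma) = 0 \} $$
for $ i = 0, 1 $. Fix an isometric embedding $ I \colon Z \to (F, \Vert \cdot \Vert_{I}) $ and set $ x = I(\mathbf{1}_{\Delta(0)}) $.

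First I would check that $ I $ maps $ Z(1) $ into $ UX $. Let $ h \in Z(1) $ with $ C(\Delta) $-norm at most $ 1 $. Since $ \mathbf{1}_{\Delta(0)} $ vanishes on $ \Delta(1) $ while $ h $ is supported on $ \Delta(1) $, every point $ \mathbf{1}_{\Delta(0)} + t h $ with $ t \in [-1, 1] $ has $ C(\Delta) $-norm equal to $ 1 $; applying the isometry $ I $, the norm $ \Vert \cdot \Vert_{I} $ is constant, equal to $ 1 $, on the segment $ [x - Ih, x + Ih] $. If $ h \neq 0 $ this segment is non-degenerate, so Lemma~\ref{linesegmI} yields $ 2 Ih = (x + Ih) - (x - Ih) \in UX $, i.e. $ Ih \in UX $. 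Since $ Z(1) $ is the linear span of its elements of norm at most $ 1 $ and $ UX $ is a linear subspace, it follows that $ I(Z(1)) \subseteq UX $.

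It remains to identify the spaces involved. By Lemma~\ref{operUI}, the subspace $ UX $ is isometric to $ X $, so $ X $ contains an isometric copy of $ Z(1) $. But $ \Delta(1) $ is homeomorphic to $ \Delta $, hence $ Z(1) $ is isometric to $ C(\Delta) $, which is isometrically universal for all separable Banach spaces. Therefore $ X $ is isometrically universal for all separable Banach spaces, contrary to the hypothesis, and we conclude that $ (F, \Vert \cdot \Vert_{I}) $ is non-universal.

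I do not expect a genuine obstacle here: the substantive work is already contained in Lemma~\ref{linesegmI}, and the only points requiring (minor) care are transporting the ``constant on a non-degenerate line segment'' property through the isometry $ I $ and observing that the single clopen half $ Z(1) \cong C(\Delta) $ already witnesses universality of $ X $.
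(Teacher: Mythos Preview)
Your proof is correct and follows essentially the same approach as the paper's: both argue by contradiction, embed $C(\Delta)$ into $(F,\Vert\cdot\Vert_I)$, and use Lemma~\ref{linesegmI} on the segment $[I(\mathbf{1}_{\Delta(0)})-Ih,\,I(\mathbf{1}_{\Delta(0)})+Ih]$ to force $I(Z(1))\subseteq UX$. Your write-up is slightly more explicit (you verify constancy of the norm along the whole segment and spell out why $Z(1)\cong C(\Delta)$ gives universality of $X$), but there is no substantive difference.
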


\begin{proof}
Assume that $ (F, \Vert \cdot \Vert_{I}) $ is isometrically universal for all separable Banach spaces. Again, let us denote
$$ \Delta = \{ 0, 1 \}^{\mathbb{N}}, \quad \Delta(i) = \{ \gamma \in \Delta : \gamma(1) = i \}, \quad i = 0, 1, $$
$$ Z = C(\Delta), \quad Z(i) = \{ h \in Z : \gamma \notin \Delta(i) \Rightarrow h(\gamma) = 0 \}, \quad i = 0, 1. $$
Considering an isometry $ I : Z \to F $, we denote
$$ f = I (\mathbf{1}_{\Delta(0)}). $$
We claim that the space $ UX $ (and thus the space $ X $ by Lemma~\ref{operUI}) is universal, showing that $ I $ maps $ Z(1) $ into $ UX $.

Given an $ h \in Z(1) $ with $ \Vert h \Vert \leq 1 $, we observe that $ \Vert \mathbf{1}_{\Delta(0)} \Vert = \Vert \mathbf{1}_{\Delta(0)} \pm h \Vert = 1 $, and so $ \Vert f \Vert_{I} = \Vert f \pm Ih \Vert_{I} = 1 $. By Lemma~\ref{linesegmI}, we have $ Ih \in UX $.
\end{proof}

\begin{lemma} \label{furthlemmaI}
We have
$$ \Vert f \Vert_{I} \geq \Vert P_{d}f \Vert_{I} + \frac{1}{2^{2d+7}} \Vert f - P_{d}f \Vert_{I}, \quad f \in F, \; d \in \mathbb{N}. $$ 
\end{lemma}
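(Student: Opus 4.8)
The statement to be proven, Lemma~\ref{furthlemmaI}, asserts
$$ \Vert f \Vert_{I} \geq \Vert P_{d}f \Vert_{I} + \frac{1}{2^{2d+7}} \Vert f - P_{d}f \Vert_{I}, \quad f \in F, \; d \in \mathbb{N}, $$
and it is the exact analogue for $\Vert\cdot\Vert_I$ of Lemma~\ref{furthlemma} for $\Vert\cdot\Vert$. The plan is to reduce the inequality for $\Vert\cdot\Vert_I$ to the already-established inequalities for the two ingredients of the renorming, namely the original norm $\Vert\cdot\Vert$ (Lemma~\ref{furthlemma}) and the seminorm $\beta$ (a monotonicity/decomposition estimate to be extracted from Lemma~\ref{betabound} and the structure of $\beta$). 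Since $\Vert f\Vert_I^2 = \Vert f\Vert^2 + 2^{-7}\beta(f)^2$, and both squared quantities are sums over coordinate-block terms that split along the decomposition $f = P_df + (f-P_df)$, the idea is to prove the corresponding ``super-additivity of the square root of a split quadratic'' type bound for each piece separately and then combine.

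First I would record the needed $\beta$-estimate. Write $g = f - P_d f \in \overline{\mathrm{span}}\{f_{d+1},f_{d+2},\dots\}$. Because each summand in \eqref{renI01} involves $e^*_{(n,k)}(f)-2e^*_{(n+1,k)}(f)$, a term is killed unless $\pi^{-1}(n+1,k) > d$ (as noted in the proof of Lemma~\ref{betabound}); hence only coordinates of $g$ enter those surviving terms, except possibly the ``bridging'' terms where $\pi^{-1}(n,k)\le d < \pi^{-1}(n+1,k)$, in which $e^*_{(n,k)}(f) = e^*_{(n,k)}(P_df)$ but $e^*_{(n+1,k)}(f)=e^*_{(n+1,k)}(g)$. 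I would split $\beta(f)^2$ accordingly and use the triangle inequality in $\ell_2$ to get $\beta(f) \le \beta(P_d f') + \beta(g')$ for suitable truncations, or more directly observe that $\beta$ restricted to $P_d F$ and the bridging terms is dominated by $C\,2^{-2d}\Vert P_d f\Vert + \beta(g)$ with the bridging contribution absorbed using \eqref{renI02}--\eqref{renI03}. The cleanest route is probably: $\beta(f)^2 \le (\beta(P_df) + \text{bridge})^2 + \beta(g)^2$-type bookkeeping, then conclude $\beta(f) \le \beta(P_d f) + \beta(g) + (\text{small in }2^{-2d})\Vert f\Vert$; alternatively one may simply prove the weaker but sufficient $\Vert f\Vert_I \ge \Vert P_d f\Vert_I + 2^{-2d-7}\Vert g\Vert$ by combining Lemma~\ref{furthlemma} (which already gives a $2^{-2n-4}\Vert g\Vert$ gain in the $\Vert\cdot\Vert$-part) with \eqref{renI05} to pass from $\Vert g\Vert$ to $\Vert g\Vert_I$, at the cost of a factor $2$ in the constant, which is exactly what the exponent $2d+7$ versus $2d+4$ leaves room for.

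With those pieces in hand, the combination step is elementary: from $\Vert f\Vert^2 \ge (\Vert P_df\Vert + \lambda\Vert g\Vert)^2$ with $\lambda = 2^{-2d-4}$ (Lemma~\ref{furthlemma}) and $\beta(f)^2 \ge \beta(P_d f)^2$ (monotonicity of $\beta$, since dropping the tail coordinates only removes nonnegative summands — modulo the bridging terms, which must be handled as above), one adds $2^{-7}$ times the second to the first and uses the scalar inequality $\sqrt{a^2+b^2} + \sqrt{c^2+d^2} \le \sqrt{(a+c)^2+(b+d)^2}$ is false in general but $\sqrt{(a+c)^2+e^2}\le\sqrt{a^2+e^2}+c$ (for $a,c,e\ge 0$) holds and is what is actually needed; applying it with $a = \Vert P_df\Vert$, $c = \lambda\Vert g\Vert$, $e^2 = 2^{-7}\beta(P_df)^2$ yields $\Vert f\Vert_I \ge \sqrt{a^2+e^2} + \lambda\Vert g\Vert \cdot(\text{correction}) \ge \Vert P_df\Vert_I + 2^{-2d-7}\Vert g\Vert$, and a final pass through \eqref{renI05} replaces $\Vert g\Vert$ by $\tfrac12\Vert g\Vert_I$. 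I expect the only real obstacle to be the careful treatment of the finitely many ``bridging'' cross-terms in $\beta(f)^2$ that couple a coordinate of $P_d f$ with a coordinate of $g$; these prevent $\beta$ from being exactly block-additive, and one must check that their contribution, being weighted by $2^{-4\pi^{-1}(n+1,k)} \le 2^{-4(d+1)}$, is small enough (comparable to $2^{-4d}\Vert P_df\Vert^2$) to be absorbed into the slack between the constants $2^{-2d-4}$ and $2^{-2d-7}$, exactly as the analogous absorption was carried out inside the proof of Lemma~\ref{betabound}. As in that lemma, I would mimic the strategy of Lemma~\ref{furthlemma}: it suffices to verify the desired inequality with $\Vert\cdot\Vert_I$ replaced on the right by $|\cdot|_j$ on each generating ball $B_{(F_j,|\cdot|_j)}$, i.e.\ to show the unit ball of $(F_j,|\cdot|_j)$ lies in $\{f : \Vert P_d f\Vert_I + 2^{-2d-7}\Vert f-P_df\Vert_I \le 1\}$, handling $j\le d$ trivially and $j\ge d+1$ by the chain of estimates above.
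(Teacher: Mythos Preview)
Your plan has the right ingredients but contains a genuine gap: the claimed monotonicity $\beta(f)^{2} \ge \beta(P_{d}f)^{2}$ is \emph{false}, and this is exactly what the bridging terms spoil. When $\pi^{-1}(n,k)\le d < \pi^{-1}(n+1,k)$, the summand $|e^{*}_{(n,k)}(P_{d}f)-2e^{*}_{(n+1,k)}(P_{d}f)|^{2}=|e^{*}_{(n,k)}(f)|^{2}$ can be strictly larger than $|e^{*}_{(n,k)}(f)-2e^{*}_{(n+1,k)}(f)|^{2}$, so $\beta(P_{d}f)$ may exceed $\beta(f)$. Your scalar-inequality route explicitly inserts $\beta(f)^{2}\ge\beta(P_{d}f)^{2}$ as a hypothesis, and the various upper bounds you propose for $\beta(f)$ (of the form $\beta(f)\le\beta(P_{d}f)+\beta(g)+\cdots$) go in the wrong direction: what is needed is a \emph{lower} bound on $\beta(f)-\beta(P_{d}f)$, i.e.\ an upper bound on $\beta(P_{d}f)-\beta(f)$. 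Also, your final suggestion to reduce to the generating balls $B_{(F_{j},|\cdot|_{j})}$ does not apply here: that reduction works for $\Vert\cdot\Vert$ because $B_{(F,\Vert\cdot\Vert)}$ is their closed convex hull, but $\Vert\cdot\Vert_{I}$ is not defined that way, and showing $\Vert P_{d}f\Vert_{I}+2^{-2d-7}\Vert g\Vert_{I}\le|f|_{j}$ would be proving $\Vert P_{d}f\Vert_{I}+2^{-2d-7}\Vert g\Vert_{I}\le\Vert f\Vert$, which is stronger than the lemma.

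The paper's argument fixes this cleanly by working with differences of squares and using the reverse triangle inequality for the seminorm $\beta$. From the triangle inequality one has $\beta(P_{d}f)-\beta(f)\le\beta(P_{d}f-f)=\beta(g)$, and Lemma~\ref{betabound} gives $\beta(g)\le 2\cdot 2^{-2d}\Vert g\Vert$ and $\beta(P_{d}f)+\beta(f)\le 2(\Vert P_{d}f\Vert+\Vert f\Vert)$. Hence
\[
\beta(P_{d}f)^{2}-\beta(f)^{2}\le 2(\Vert P_{d}f\Vert+\Vert f\Vert)\cdot\tfrac{2}{2^{2d}}\Vert g\Vert,
\]
while Lemma~\ref{furthlemma} gives $\Vert f\Vert^{2}-\Vert P_{d}f\Vert^{2}\ge(\Vert f\Vert+\Vert P_{d}f\Vert)\cdot 2^{-2d-4}\Vert g\Vert$. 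Subtracting $2^{-7}$ times the first from the second yields
\[
\Vert f\Vert_{I}^{2}-\Vert P_{d}f\Vert_{I}^{2}\ge\bigl(2^{-2d-4}-2^{-7}\cdot 4\cdot 2^{-2d}\bigr)(\Vert f\Vert+\Vert P_{d}f\Vert)\Vert g\Vert=2^{-2d-5}(\Vert f\Vert+\Vert P_{d}f\Vert)\Vert g\Vert,
\]
and passing to $\Vert\cdot\Vert_{I}$ via \eqref{renI05} and dividing by $\Vert f\Vert_{I}+\Vert P_{d}f\Vert_{I}$ gives the claim. The slack you identified between $2^{-2d-4}$ and $2^{-2d-7}$ is indeed exactly what absorbs the bridging contribution, but through this subtraction rather than through a monotonicity of $\beta$.
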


\begin{proof}
By Lemma~\ref{furthlemma},
\begin{align*}
\Vert f \Vert^{2} - \Vert P_{d}f \Vert^{2} & = (\Vert f \Vert + \Vert P_{d}f \Vert) (\Vert f \Vert - \Vert P_{d}f \Vert) \\
 & \geq (\Vert f \Vert + \Vert P_{d}f \Vert) \cdot \frac{1}{2^{2d+4}} \Vert f - P_{d}f \Vert.
\end{align*}
At the same time, by Lemma~\ref{betabound},
\begin{align*}
\beta(P_{d}f)^{2} - \beta(f)^{2} & = (\beta(P_{d}f) + \beta(f)) (\beta(P_{d}f) - \beta(f)) \\
 & \leq (\beta(P_{d}f) + \beta(f)) \cdot \beta(f - P_{d}f) \\
 & \leq 2 (\Vert P_{d}f \Vert + \Vert f \Vert) \cdot \frac{2}{2^{2d}} \Vert f - P_{d}f \Vert. 
\end{align*}
Thus, using \eqref{renI05}, we can compute
\begin{align*}
\Vert f \Vert_{I}^{2} - \Vert P_{d}f \Vert_{I}^{2} & = \Vert f \Vert^{2} - \Vert P_{d}f \Vert^{2} + \frac{1}{2^{7}} \big( \beta(f)^{2} - \beta(P_{d}f)^{2} \big) \\
 & \geq \Big( \frac{1}{2^{2d+4}} - \frac{1}{2^{7}} \cdot \frac{4}{2^{2d}} \Big) (\Vert f \Vert + \Vert P_{d}f \Vert) \cdot \Vert f - P_{d}f \Vert \\
 & = \frac{1}{2^{2d+5}} (\Vert f \Vert + \Vert P_{d}f \Vert) \cdot \Vert f - P_{d}f \Vert \\
 & \geq \frac{1}{2^{2d+5}} \cdot \frac{1}{2} (\Vert f \Vert_{I} + \Vert P_{d}f \Vert_{I}) \cdot \frac{1}{2} \Vert f - P_{d}f \Vert_{I}.
\end{align*}
Now, it is sufficient to divide both sides by $ \Vert f \Vert_{I} + \Vert P_{d}f \Vert_{I} $.
\end{proof}

\section{Renorming II} \label{sec:renormingII}

\begin{definition}
We define a seminorm
\begin{equation} \label{renII01}
\alpha(f)^{2} = \sum_{n=1}^{\infty} \sum_{k=1}^{\infty} \frac{1}{2^{4\pi^{-1}(n,k)}} |e^{*}_{(n,k)}(f)|^{2}, \quad f \in F,
\end{equation}
where $ e^{*}_{(n,k)} $ is the system biorthogonal with the basic system $ e_{(n,k)} $.
\end{definition}

\begin{lemma} \label{alphabound}
We have
$$ \alpha(f) < \Vert f \Vert, \quad 0 \neq f \in F. $$
\end{lemma}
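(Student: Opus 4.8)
The plan is to bound $\alpha(f)$ against $\Vert f\Vert$ by exploiting, exactly as in the proofs of Lemma~\ref{operT} and Lemma~\ref{furthlemma}, the fact that the unit ball $B_{(F,\Vert\cdot\Vert)}$ is the closed convex hull of the finite-dimensional balls $B_{(F_{d},|\cdot|_{d})}$. Since $\alpha$ is a seminorm (hence its square is convex) and the inequality $\alpha(f)<\Vert f\Vert$ is strict, I cannot simply pass a pointwise bound through the convex hull. Instead I would prove the slightly stronger uniform statement that there is a constant $c<1$ with $\alpha(f)\le c\,\Vert f\Vert$ for all $f\in F$, which \emph{does} pass to closed convex hulls; and to get such a $c$ it suffices to verify that $\alpha(f)\le c\,|f|_{d}$ for every $d\in\mathbb{N}$ and every $f\in F_{d}$.

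So fix $d$ and $f\in F_{d}$. The elements of $F_{d}$ are supported on the first $d$ coordinates $\pi(1),\dots,\pi(d)$, so only those terms of the sum defining $\alpha(f)^{2}$ survive. For such a surviving term, $\pi^{-1}(n,k)\le d$, and I bound $|e^{*}_{(n,k)}(f)|\le \Vert e^{*}_{(n,k)}\Vert_{\ell_{2}(X)}\,\Vert f\Vert_{\ell_{2}(X)}\le 2\Vert f\Vert_{\ell_{2}(X)}$ just as in \eqref{renI02}. Hence
$$ \alpha(f)^{2}\le \sum_{j=1}^{d}\frac{1}{2^{4j}}\cdot 4\,\Vert f\Vert_{\ell_{2}(X)}^{2}\le 4\cdot\frac{1}{2^{4}-1}\cdot\frac{1}{1-2^{-4d}}\cdot\frac{1}{2^{4}}\,\Vert f\Vert_{\ell_{2}(X)}^{2}, $$
which is a crude but clean bound; writing $\sum_{j=1}^{\infty}2^{-4j}=\tfrac{1}{15}$ gives $\alpha(f)^{2}\le \tfrac{4}{15}\Vert f\Vert_{\ell_{2}(X)}^{2}$. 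Now invoke \eqref{rat02}, which gives $\Vert f\Vert_{\ell_{2}(X)}\le (1-2^{-2d-1})^{-1}|f|_{d}\le 2|f|_{d}$ for $f\in F_{d}$; more carefully, $(1-2^{-2d-1})^{-1}\le (1-2^{-3})^{-1}=\tfrac{8}{7}$ since $d\ge 1$. Combining, $\alpha(f)^{2}\le \tfrac{4}{15}\cdot\tfrac{64}{49}|f|_{d}^{2}=\tfrac{256}{735}|f|_{d}^{2}<|f|_{d}^{2}$, so $\alpha(f)\le c\,|f|_{d}$ with $c=\sqrt{256/735}<1$ uniformly in $d$.

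Passing to the unit ball of $F$: if $\Vert f\Vert\le 1$ then $f\in\overline{\mathrm{co}}\bigcup_{d}B_{(F_{d},|\cdot|_{d})}$ by \eqref{rat03}, and since $\{g:\alpha(g)\le c\}$ is closed and convex (being a sublevel set of a continuous seminorm) and contains each $B_{(F_{d},|\cdot|_{d})}$ by the previous paragraph, it contains $B_{(F,\Vert\cdot\Vert)}$; hence $\alpha(f)\le c<1$. By homogeneity, $\alpha(f)\le c\,\Vert f\Vert$ for all $f\in F$, and in particular $\alpha(f)<\Vert f\Vert$ whenever $f\ne 0$. The main point requiring care — the only place where the argument is not a routine estimate — is the observation that strictness must be obtained from a \emph{uniform} constant $c<1$ rather than pointwise, because convexity of $\alpha^{2}$ only transfers non-strict inequalities through the closed convex hull; everything else is the same pattern already used twice in Section~\ref{sec:branches}.
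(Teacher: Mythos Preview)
Your proof is correct and in fact produces the very same constant $c^{2}=256/735$ that the paper obtains. The only difference is that you take an unnecessary detour through the finite-dimensional balls and the convex hull: the paper observes that \eqref{renI02} already bounds $|e^{*}_{(n,k)}(f)|\le 2\cdot\tfrac{8}{7}\Vert f\Vert$ directly in terms of the $F$-norm (using $\Vert f\Vert_{\ell_{2}(X)}\le\tfrac{8}{7}\Vert f\Vert$ from Lemma~\ref{ratlemma}), so one can sum the full geometric series at once and get
\[
\alpha(f)^{2}\le\sum_{j=1}^{\infty}\frac{1}{2^{4j}}\Big(\frac{16}{7}\Vert f\Vert\Big)^{2}=\frac{256}{735}\,\Vert f\Vert^{2}<\Vert f\Vert^{2}
\]
for every $f\in F$ in a single line. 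Thus your careful discussion about needing a uniform $c<1$ before passing through the closed convex hull, while perfectly valid, is simply not needed here: the global two-sided equivalence $\tfrac{7}{8}\Vert\cdot\Vert_{\ell_{2}(X)}\le\Vert\cdot\Vert\le\Vert\cdot\Vert_{\ell_{2}(X)}$ already delivers the uniform bound directly. (Incidentally, the displayed intermediate expression with the factor $\tfrac{1}{1-2^{-4d}}$ is garbled, but you immediately recover the correct bound $\tfrac{4}{15}\Vert f\Vert_{\ell_{2}(X)}^{2}$ in the next clause, so this is cosmetic.)
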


\begin{proof}
Using \eqref{renI02}, we can compute
$$ \alpha(f)^{2} \leq \sum_{n=1}^{\infty} \sum_{k=1}^{\infty} \frac{1}{2^{4\pi^{-1}(n,k)}} \Big( 2 \cdot \frac{8}{7} \Vert f \Vert \Big)^{2} = \frac{1}{15} \Big( 2 \cdot \frac{8}{7} \Vert f \Vert \Big)^{2} < \Vert f \Vert^{2}, $$
which proves the lemma.
\end{proof}

\begin{fact} \label{rhonorm}
There is a norm $ \varrho $ on $ \mathbb{R}^{3} $ such that
\begin{itemize}
\item $ \frac{1}{2}(|r|+|s|) \leq \varrho(r,s,t) \leq \max \{ |r|, |s|, |t| \} $ and, in particular, the unit sphere contains the line segment $ [(1,1,-1),(1,1,1)] $,
\item $ \varrho(r',s',t') \geq \varrho(r,s,t) $ for $ 0 \leq r \leq r', 0 \leq s \leq s', 0 \leq t \leq t' $,
\item $ \varrho(r,s,t') > \varrho(r,s,t) $ for $ 0 < r < s, 0 < t < t' $,
\item $ \varrho(r',s,t) \geq \varrho(r,s,t) + \frac{1}{4}(r'-r) $ for $ r,r',s,t > 0 $, $ 0 < r < r' $.
\end{itemize}
\end{fact}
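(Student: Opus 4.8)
The plan is to exhibit $\varrho$ by an explicit formula and then verify the four displayed properties one at a time. Write $\mu(r,s)=\tfrac12(|r|+|s|)$ for the prescribed lower bound and $\nu(r,s,t)=\max\{|r|,|s|,|t|\}$ for the prescribed upper one; note $\mu\le\nu$ pointwise and $\mu(r,s)=\nu(r,s,t)$ exactly on $\{|r|=|s|,\ |t|\le|r|\}$, so on that set the two-sided bound already pins $\varrho$ down to $|r|$. In particular it pins $\varrho\equiv 1$ on $\{(1,1,t):|t|\le1\}$, so the ``in particular'' clause is automatic from the two bounds and needs no separate attention. The second property is equivalent to the unit ball of $\varrho$ being downward-closed in the positive octant, which for an explicitly given $\varrho$ I would read off from a manifestly coordinatewise-monotone description of the defining inequalities. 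The fourth property, $\varrho(r',s,t)\ge\varrho(r,s,t)+\tfrac14(r'-r)$, I would obtain by arranging $\partial_r\varrho\ge\tfrac14$ almost everywhere on the open positive octant (for instance by making $\varrho$ majorize $\tfrac14|r|$ plus a term non-decreasing in $r$ there) and integrating in $r$.

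The real difficulty is the third property together with the pinching just described. On the diagonal $r=s$ the bound forces $\varrho(r,r,t)=|r|$ for $|t|\le|r|$, i.e.\ no $t$-dependence at all; yet the third property demands $t\mapsto\varrho(r,s,t)$ be \emph{strictly} increasing on $(0,\infty)$ throughout the open wedge $0<r<s$, arbitrarily close to that diagonal, so the ``$t$-sensitivity'' of $\varrho$ must fade exactly to first order as $r\to s$. This rules out the cheapest candidates: $\varrho$ cannot be $\max\{\mu,\gamma\}$ for a continuous $\gamma$ (such a $\varrho$ is constant in $t$ on a non-degenerate $t$-interval near every direction in which $\mu$ is the active constraint); it cannot be an ellipsoidal norm (those carry no flat segment on the sphere), and more generally it cannot be a maximum of finitely many affine-plus-quadratic pieces; nor can the unit ball be of the simple shape $\{\,|r|+|s|\le2,\ |t|\le H(r,s)\,\}$, since keeping the four vertical edges $\{|r|=|s|=1,\ |t|\le1\}$ on the boundary forces $H>0$ near the points $|r|=|s|$, which reinstates a flat $t$-interval for the neighbouring directions. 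A construction that does work is to take $\varrho$ either (i) as the supremum of a one-parameter family of elementary norms whose upper envelope, restricted to each ray in the wedge, is a strictly convex increasing function of $|t|$ — a \emph{continuum} of pieces evades the ``finitely many pieces'' obstruction — or (ii) as a norm whose unit ball agrees with the cube $[-1,1]^3$ precisely along the four edges $\{|r|=|s|=1,\ |t|\le1\}$ and, away from those edges, is bounded by a curved surface that stays strictly inside the slab $|r|+|s|\le2$ and nowhere has a tangent plane parallel to the $t$-axis. I would carry out one of these explicitly.

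Once such a $\varrho$ is in hand, the remaining verifications fall into place: the two-sided bound and the segment are immediate from the formula, the second property is the downward-closedness check, the strict $t$-monotonicity is exactly the design feature of the chosen family (resp.\ surface), and the $\tfrac14$-estimate is the integration in $r$ described above. I expect essentially all of the work of the proof to be concentrated in the construction itself — in producing a genuine (convex, positively homogeneous) norm that reconciles the third property with the pinch along $r=s$ — and almost no work in the four verifications afterwards.
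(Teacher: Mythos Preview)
Your option (ii), combined with the additive $\tfrac14|r|$ term for the fourth property, is exactly the paper's approach; the only gap is that you never write down a concrete $\varrho$, and the one that works is far simpler than your obstruction analysis anticipates. The paper sets
\[
B_{(\mathbb{R}^3,\varrho_0)} \;=\; \mathrm{co}\bigl(\{(\pm1,\pm1,\pm1)\}\cup\sqrt{2}\,B\bigr), \qquad \varrho \;=\; \tfrac14(|r|+|s|)+\tfrac12\,\varrho_0,
\]
with $B$ the Euclidean unit ball. This $\varrho_0$ is an instance of your type (ii): the hyperplane $r+s=2$ supports the hull and is tangent to $\sqrt{2}\,B$ precisely at $(1,1,0)$, so the four vertical cube edges lie on the boundary and the first two properties follow at once. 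For the third property one may argue dually: the polar body is $\{|w_1|+|w_2|+|w_3|\le 1\}\cap\{w_1^2+w_2^2+w_3^2\le 1/2\}$, and any boundary point of it with $w_3=0$ and $|w_1|\ne|w_2|$ lies on the smooth Euclidean sphere with the $\ell_1$ constraint strictly inactive, hence is exposed only by directions with $t=0$; thus no direction with $0<r<s$ and $t>0$ attains its maximum at a functional with $w_3=0$, and strict $t$-monotonicity follows. The additive $\tfrac14(|r|+|s|)$ then gives the fourth property exactly as you describe.

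So your diagnosis of the tension between the pinch along $r=s$ and the strict $t$-dependence is correct, and your list of failed candidates is accurate, but it leads you to expect a delicate continuum construction or a hand-built curved surface. In fact eight points and one sphere already produce the required body; the proof is two lines once the formula is named.
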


\begin{proof}[Proof (sketch).]
Let a norm $ \varrho_{0} $ be given by
$$ B_{(\mathbb{R}^{3}, \varrho_{0})} = \mathrm{co} \, \Big( \{ (\pm 1, \pm 1, \pm 1) \} \cup \sqrt{2} B \Big), $$
where $ B $ stands for the Euclidean unit ball of $ \mathbb{R}^{3} $. This norm satisfies the first three properties, and the norm
$$ \varrho(r,s,t) = \frac{1}{4} (|r|+|s|) + \frac{1}{2} \varrho_{0}(r,s,t) $$
satisfies additionally the fourth one.
\end{proof}

\begin{definition}
We define
\begin{equation} \label{renII02}
\Vert f \Vert_{II} = \varrho\big( \Vert f \Vert, \Vert f \Vert_{I}, \alpha(f) \big), \quad f \in F.
\end{equation}
We notice that a simple application of \eqref{renI05} and Lemma~\ref{alphabound} gives
\begin{equation} \label{renII03}
\Vert f \Vert \leq \Vert f \Vert_{II} \leq 2 \Vert f \Vert,
\end{equation}
since
\begin{align*}
\Vert f \Vert & \leq \frac{1}{2}(\Vert f \Vert + \Vert f \Vert_{I}) \leq \varrho\big( \Vert f \Vert, \Vert f \Vert_{I}, \alpha(f) \big) \\
 & \leq \max \big\{ \Vert f \Vert, \Vert f \Vert_{I}, \alpha(f) \big\} = \Vert f \Vert_{I} \leq 2 \Vert f \Vert.
\end{align*}
\end{definition}

\begin{lemma} \label{operUII}
We have $ \Vert Ux \Vert_{II} = \Vert x \Vert _{X} $ for $ x \in X $ and the range of $ U $ is $ 1 $-complemented in $ (F, \Vert \cdot \Vert_{II}) $.
\end{lemma}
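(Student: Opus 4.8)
The plan is to deduce both assertions from the facts already established for $ \Vert \cdot \Vert $ and $ \Vert \cdot \Vert_{I} $, combined with the elementary properties of $ \varrho $ collected in Fact~\ref{rhonorm}.

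First I would evaluate $ \Vert Ux \Vert_{II} = \varrho\big( \Vert Ux \Vert, \Vert Ux \Vert_{I}, \alpha(Ux) \big) $. By Lemma~\ref{operU} and Lemma~\ref{operUI} we already know that $ \Vert Ux \Vert = \Vert Ux \Vert_{I} = \Vert x \Vert_{X} $ (the second equality uses $ \beta(Ux) = 0 $ via Fact~\ref{betanull}). For the third slot, Lemma~\ref{alphabound} gives $ \alpha(Ux) < \Vert Ux \Vert = \Vert x \Vert_{X} $ whenever $ Ux \neq 0 $, while $ \alpha(0) = 0 $; hence $ \alpha(Ux) \leq \Vert x \Vert_{X} $ in all cases. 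Substituting $ r = s = \Vert x \Vert_{X} $ and $ t = \alpha(Ux) \leq \Vert x \Vert_{X} $ into the first property of $ \varrho $,
$$ \Vert x \Vert_{X} = \frac{1}{2}\big( \Vert x \Vert_{X} + \Vert x \Vert_{X} \big) \leq \varrho\big( \Vert x \Vert_{X}, \Vert x \Vert_{X}, \alpha(Ux) \big) \leq \max\{ \Vert x \Vert_{X}, \Vert x \Vert_{X}, \alpha(Ux) \} = \Vert x \Vert_{X}, $$
so $ \Vert Ux \Vert_{II} = \Vert x \Vert_{X} $.

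For the complementation statement I would reuse the projection $ UT : F \to F $ from the proof of Lemma~\ref{operU}, which is linear with range $ UX $ (from $ TUx = x $ one gets $ UT(Ux) = Ux $). Given $ f \in F $, the element $ UTf $ lies in $ UX $, so the first part of the present lemma applied to $ x = Tf $ yields $ \Vert UTf \Vert_{II} = \Vert Tf \Vert_{X} $. Lemma~\ref{operT} gives $ \Vert Tf \Vert_{X} \leq \Vert f \Vert $, and \eqref{renII03} gives $ \Vert f \Vert \leq \Vert f \Vert_{II} $; chaining these inequalities gives $ \Vert UTf \Vert_{II} \leq \Vert f \Vert_{II} $. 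Therefore $ UT $ is a norm-one projection of $ (F, \Vert \cdot \Vert_{II}) $ onto $ UX $, i.e.\ $ UX $ is $ 1 $-complemented in $ (F, \Vert \cdot \Vert_{II}) $.

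There is no real obstacle here: the argument reduces entirely to the identities $ \beta(Ux) = 0 $, $ \alpha(Ux) \leq \Vert Ux \Vert $, the sandwich property of $ \varrho $, and the factorisation $ T \circ U = \mathrm{id}_{X} $ together with $ \Vert Tf \Vert_{X} \leq \Vert f \Vert \leq \Vert f \Vert_{II} $. The only point requiring a little care is the degenerate case $ x = 0 $ (respectively $ f $ with $ UTf = 0 $), which must be treated separately when the strict inequality of Lemma~\ref{alphabound} is invoked.
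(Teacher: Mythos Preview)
Your proof is correct and follows essentially the same approach as the paper: both use $\Vert Ux\Vert=\Vert Ux\Vert_{I}=\Vert x\Vert_{X}$ together with the fact that $\varrho(r,r,t)=r$ for $|t|\leq r$ (you extract this from the sandwich inequality in Fact~\ref{rhonorm}, the paper phrases it via the segment $[(1,1,-1),(1,1,1)]$ on the unit sphere), and both verify that the projection $UT$ remains a norm-one projection via $\Vert UTf\Vert_{II}=\Vert Tf\Vert_{X}\leq\Vert f\Vert\leq\Vert f\Vert_{II}$.
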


\begin{proof}
Let $ x \in X $ be such that $ \Vert x \Vert_{X} = 1 $. By Lemma~\ref{alphabound}, Lemma~\ref{operU} and Lemma~\ref{operUI}, we have $ \alpha(Ux) < \Vert Ux \Vert = 1 = \Vert Ux \Vert_{I} $. Since the unit sphere $ S_{(\mathbb{R}^{3}, \varrho)} $ contains the line segment $ [(1,1,-1),(1,1,1)] $, we obtain $ \Vert Ux \Vert_{II} = 1 = \Vert x \Vert_{X} $. The projection $ UT $ still works, because $ \Vert UTf \Vert_{II} = \Vert Tf \Vert_{X} = \Vert UTf \Vert \leq \Vert f \Vert \leq \Vert f \Vert_{II} $ for $ f \in F $.
\end{proof}

\begin{lemma} \label{linesegmII}
Let $ [u, v] $ be a non-degenerate line segment in $ F $ such that $ \Vert \cdot \Vert_{II} $ is constant on $ [u, v] $. Then $ u $ and $ v $ belong to $ UX $.
\end{lemma}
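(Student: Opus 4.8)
The plan is to follow the scheme of Lemma~\ref{linesegm} and Lemma~\ref{linesegmI}, extracting the rigidity from the four properties of $\varrho$ in Fact~\ref{rhonorm} together with one new ingredient: the seminorm $\alpha$ is in fact a \emph{strictly convex} norm. Indeed $\alpha(f)=\Vert Sf\Vert_{\ell_{2}}$ where $S:F\to\ell_{2}$ is the bounded injective operator $f\mapsto(2^{-2\pi^{-1}(n,k)}e^{*}_{(n,k)}(f))_{n,k}$ (bounded by Lemma~\ref{alphabound}, injective because $\{e_{(n,k)}\}$ is a basis of $F$), so strict convexity is inherited from $\ell_{2}$. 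This is exactly what will let us conclude $u,v\in UX$ rather than merely $v-u\in UX$ as in Lemma~\ref{linesegmI}.

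Normalize so that $\Vert w_{t}\Vert_{II}=1$ for $t\in[0,1]$, where $w_{t}=(1-t)u+tv$; since $[u,v]$ is non-degenerate, $w_{t}\neq0$ throughout, hence $\Vert w_{t}\Vert,\Vert w_{t}\Vert_{I},\alpha(w_{t})>0$. First I would prove that $t\mapsto\Vert w_{t}\Vert$ is affine. Fix $0\le s<t\le1$, put $m=\frac{s+t}{2}$, and set $a'=\frac12(\Vert w_{s}\Vert+\Vert w_{t}\Vert)$, $b'=\frac12(\Vert w_{s}\Vert_{I}+\Vert w_{t}\Vert_{I})$, $c'=\frac12(\alpha(w_{s})+\alpha(w_{t}))$; convexity of the three (semi)norms gives $\Vert w_{m}\Vert\le a'$, $\Vert w_{m}\Vert_{I}\le b'$, $\alpha(w_{m})\le c'$. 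Then monotonicity and convexity of $\varrho$ yield
$$1=\varrho(\Vert w_{m}\Vert,\Vert w_{m}\Vert_{I},\alpha(w_{m}))\le\varrho(a',\Vert w_{m}\Vert_{I},\alpha(w_{m}))\le\varrho(a',b',c')\le\frac12\Vert w_{s}\Vert_{II}+\frac12\Vert w_{t}\Vert_{II}=1,$$
so all terms equal $1$; comparing the first two and using the fourth property of $\varrho$ (all entries positive) forces $\Vert w_{m}\Vert=a'$, and midpoint-affinity on every subsegment makes $t\mapsto\Vert w_{t}\Vert$ affine on $[0,1]$.

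Now put $Z=\{t\in[0,1]:w_{t}\in UX\}$. By Fact~\ref{betanull} and \eqref{renI04}, $Z=\{t:\Vert w_{t}\Vert=\Vert w_{t}\Vert_{I}\}$; since $UX$ is a subspace and $t\mapsto w_{t}$ is affine, $Z$ is $[0,1]$, a singleton, or empty. If $Z=[0,1]$ then $u,v\in UX$ and we are finished, so suppose $|Z|\le1$. Then for every $0\le s<t\le1$ at least one of $w_{s},w_{t}$ lies outside $UX$, and \eqref{renI05} gives $a'<b'$ for the quantities above. Running the chain $1\le\varrho(a',b',\alpha(w_{m}))\le\varrho(a',b',c')\le1$ (the first inequality because $a'=\Vert w_{m}\Vert$, $b'\ge\Vert w_{m}\Vert_{I}$) and invoking the third property of $\varrho$ (legitimate since $0<a'<b'$ and $\alpha(w_{m})>0$) forces $\alpha(w_{m})=c'$; hence $t\mapsto\alpha(w_{t})$ would be affine on $[0,1]$. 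But $u,v$ are linearly independent --- otherwise $v=cu$ with $c\neq1$ and $\Vert w_{t}\Vert_{II}=|1+t(c-1)|\,\Vert u\Vert_{II}$ is non-constant --- so $w_{s},w_{t}$ are never positively proportional for $s\neq t$, and strict convexity of $\alpha$ makes $t\mapsto\alpha(w_{t})$ strictly convex, a contradiction. Therefore $Z=[0,1]$ and $u,v\in UX$.

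The real obstacle is precisely this dichotomy. The unit sphere of $\varrho$ contains genuine line segments (such as $[(1,1,-1),(1,1,1)]$), so $\Vert\cdot\Vert_{II}$ \emph{can} be flat along segments; the point is that this cannot happen transversally to $UX$, because away from $UX$ one has the \emph{strict} inequality $\Vert\cdot\Vert<\Vert\cdot\Vert_{I}$ built into $\Vert\cdot\Vert_{I}$ via $\beta$, and that strictness is exactly what activates the third property of $\varrho$ and forces $\alpha$ to be affine along the segment, which its strict convexity forbids. Everything else is the routine midpoint/convexity bookkeeping already used in Lemmas~\ref{linesegm} and~\ref{linesegmI}.
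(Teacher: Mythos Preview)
Your proof is correct and rests on the same mechanism as the paper's --- strict convexity of $\alpha$ (as an $\ell_2$-type norm) combined with the third property of $\varrho$ --- though the paper proceeds more directly by showing that the midpoint $w=\tfrac12(u+v)$ lies in $UX$ (and then repeating on subsegments): if $w\notin UX$ then $\|w\|<\|w\|_I$, whence property~3 of $\varrho$ together with $\alpha(w)<\tfrac12(\alpha(u)+\alpha(v))$ immediately yield $\|w\|_{II}<\tfrac12(\|u\|_{II}+\|v\|_{II})$. Your Step~1 (affinity of $t\mapsto\|w_t\|$ via the fourth property of $\varrho$) is in fact unnecessary, since the inequality $1\le\varrho(a',b',\alpha(w_m))$ in your later chain already follows from monotonicity of $\varrho$ together with the convexity inequalities $\|w_m\|\le a'$ and $\|w_m\|_I\le b'$.
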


\begin{proof}
It is enough to show that $ w = \frac{1}{2}(u+v) \in UX $ (the argument can be repeated for any subsegment of $ [u, v] $). Assume the opposite, i.e., $ w \notin UX $. We have $ \beta(w) > 0 $ by Fact~\ref{betanull}, and so $ \Vert w \Vert < \Vert w \Vert_{I} $. Using the inequality
$$ \alpha(w) < \frac{1}{2}\big( \alpha(u) + \alpha(v) \big), $$
a property of $ \varrho $ provides
$$ \varrho\Big( \Vert w \Vert, \Vert w \Vert_{I}, \frac{1}{2}\big( \alpha(u) + \alpha(v) \big) \Big) > \varrho \big( \Vert w \Vert, \Vert w \Vert_{I}, \alpha(w) \big) = \Vert w \Vert_{II}. $$
The computation
\begin{align*}
\frac{1}{2}\big( \Vert u \Vert_{II} + & \Vert v \Vert_{II} \big) = \frac{1}{2} \Big( \varrho \big( \Vert u \Vert, \Vert u \Vert_{I}, \alpha(u) \big) + \varrho \big( \Vert v \Vert, \Vert v \Vert_{I}, \alpha(v) \big) \Big) \\
 & \geq \varrho \Big( \frac{1}{2} \big( \Vert u \Vert + \Vert v \Vert \big), \frac{1}{2} \big( \Vert u \Vert_{I} + \Vert v \Vert_{I} \big), \frac{1}{2} \big( \alpha(u) + \alpha(v) \big) \Big) \\
 & \geq \varrho\Big( \Vert w \Vert, \Vert w \Vert_{I}, \frac{1}{2}\big( \alpha(u) + \alpha(v) \big) \Big) > \Vert w \Vert_{II} 
\end{align*}
concludes the proof.
\end{proof}

\begin{proposition} \label{strictconv}
If $ X $ is strictly convex, then $ (F, \Vert \cdot \Vert_{II}) $ is also strictly convex.
\end{proposition}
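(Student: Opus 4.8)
The plan is to argue by contradiction, reducing everything to the already established Lemma~\ref{linesegmII}. Suppose $ (F, \Vert \cdot \Vert_{II}) $ is not strictly convex. Then there is a non-degenerate line segment $ [u,v] \subset F $ on which $ \Vert \cdot \Vert_{II} $ is constant.

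The key step is to transfer this segment into the copy $ UX $ of $ X $. By Lemma~\ref{linesegmII}, both endpoints $ u $ and $ v $ lie in $ UX $, and since $ UX $ is a linear subspace, the whole segment $ [u,v] $ lies in $ UX $. Write $ u = Ux_{1} $ and $ v = Ux_{2} $ with $ x_{1}, x_{2} \in X $; since $ U $ is injective (it is an isometry for $ \Vert \cdot \Vert_{\ell_{2}(X)} $ by Lemma~\ref{operU}) and $ u \neq v $, we have $ x_{1} \neq x_{2} $. By Lemma~\ref{operUII}, the map $ U $ is a linear isometry from $ (X, \Vert \cdot \Vert_{X}) $ onto $ (UX, \Vert \cdot \Vert_{II}) $. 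Hence, for $ 0 \leq \lambda \leq 1 $,
$$ \big\Vert (1-\lambda)x_{1} + \lambda x_{2} \big\Vert_{X} = \big\Vert U\big((1-\lambda)x_{1} + \lambda x_{2}\big) \big\Vert_{II} = \big\Vert (1-\lambda)u + \lambda v \big\Vert_{II}, $$
which is constant in $ \lambda $. So $ \Vert \cdot \Vert_{X} $ is constant on the non-degenerate segment $ [x_{1}, x_{2}] $, contradicting the strict convexity of $ X $. Therefore $ (F, \Vert \cdot \Vert_{II}) $ is strictly convex.

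I expect no real obstacle here: the entire difficulty has been absorbed into Lemma~\ref{linesegmII} (and, behind it, the properties of the norm $ \varrho $ in Fact~\ref{rhonorm} and the seminorms $ \alpha, \beta $), so the argument above is just the routine extraction of the conclusion. The only point to state carefully is that $ U $ is genuinely an onto linear isometry of $ (X,\Vert\cdot\Vert_X) $ onto $ (UX,\Vert\cdot\Vert_{II}) $, which is exactly the content of Lemma~\ref{operUII}.
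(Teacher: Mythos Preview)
Your proof is correct and follows exactly the same approach as the paper, which simply writes ``This follows from Lemma~\ref{operUII} and Lemma~\ref{linesegmII}.'' You have merely spelled out the routine extraction of the conclusion from those two lemmas.
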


\begin{proof}
This follows from Lemma~\ref{operUII} and Lemma~\ref{linesegmII}.
\end{proof}

\begin{lemma} \label{furthlemmaII}
We have
$$ \Vert f \Vert_{II} \geq \Vert P_{d}f \Vert_{II} + \frac{1}{2^{2d+7}} \Vert f - P_{d}f \Vert_{II}, \quad f \in F, \; d \in \mathbb{N}. $$ 
\end{lemma}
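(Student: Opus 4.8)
The plan is to mirror the proofs of Lemma~\ref{furthlemma} and Lemma~\ref{furthlemmaI}, but to exploit the four structural properties of $\varrho$ from Fact~\ref{rhonorm} rather than expand squares. Recall that $\Vert f\Vert_{II}=\varrho\big(\Vert f\Vert,\Vert f\Vert_{I},\alpha(f)\big)$. Three preliminary observations feed the argument. First, since $P_{d}$ is a partial sum operator and $f_{i}=e_{\pi(i)}$, the coordinate supports of $P_{d}f$ and $f-P_{d}f$ are disjoint, so $\alpha(f)^{2}=\alpha(P_{d}f)^{2}+\alpha(f-P_{d}f)^{2}$; in particular $\alpha(f)\ge\alpha(P_{d}f)$. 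Second, Lemma~\ref{furthlemmaI} yields $\Vert f\Vert_{I}\ge\Vert P_{d}f\Vert_{I}$ (the basis is monotone for $\Vert\cdot\Vert_{I}$). Third, Lemma~\ref{furthlemma} supplies the quantitative gap in the first coordinate, $\Vert f\Vert\ge\Vert P_{d}f\Vert+\tfrac{1}{2^{2d+4}}\Vert f-P_{d}f\Vert$.

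I would first dispose of the degenerate cases: if $f=P_{d}f$ the inequality is trivial, and if $P_{d}f=0$ it reduces to $\Vert f\Vert_{II}\ge\tfrac{1}{2^{2d+7}}\Vert f\Vert_{II}$, which holds since $2^{2d+7}>1$. So assume $P_{d}f\neq0$ and $f\neq P_{d}f$; then $f\neq0$, hence $\Vert f\Vert_{I}>0$ and $\alpha(f)>0$ (the latter because $\alpha(f)^{2}$, by \eqref{renII01}, is a strictly positive combination of the $|e^{*}_{(n,k)}(f)|^{2}$ unless $f=0$). Now apply in turn: monotonicity of $\varrho$ in its first argument, using $\Vert f\Vert\ge\Vert P_{d}f\Vert+\tfrac{1}{2^{2d+4}}\Vert f-P_{d}f\Vert$; then the fourth property of $\varrho$ with $r=\Vert P_{d}f\Vert$, $r'=\Vert P_{d}f\Vert+\tfrac{1}{2^{2d+4}}\Vert f-P_{d}f\Vert$, $s=\Vert f\Vert_{I}$, $t=\alpha(f)$, all strictly positive; and finally monotonicity of $\varrho$ in its second and third arguments, using $\Vert f\Vert_{I}\ge\Vert P_{d}f\Vert_{I}$ and $\alpha(f)\ge\alpha(P_{d}f)$. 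This chain gives
$$\Vert f\Vert_{II}\ \ge\ \varrho\big(\Vert P_{d}f\Vert,\Vert P_{d}f\Vert_{I},\alpha(P_{d}f)\big)+\tfrac14\cdot\tfrac{1}{2^{2d+4}}\Vert f-P_{d}f\Vert\ =\ \Vert P_{d}f\Vert_{II}+\tfrac{1}{2^{2d+6}}\Vert f-P_{d}f\Vert .$$

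To finish, I would use \eqref{renII03} in the form $\Vert f-P_{d}f\Vert\ge\tfrac12\Vert f-P_{d}f\Vert_{II}$, which turns $\tfrac{1}{2^{2d+6}}\Vert f-P_{d}f\Vert$ into $\tfrac{1}{2^{2d+7}}\Vert f-P_{d}f\Vert_{II}$, giving the claim. The routine work is checking the three preliminary observations and tracking the exponents; the only point needing care is that the fourth property of $\varrho$ is stated only for strictly positive arguments, which is exactly why the degenerate cases $f=P_{d}f$ and $P_{d}f=0$ are removed first and why one records that $f\neq0$ before invoking it. I do not foresee a genuine obstacle.
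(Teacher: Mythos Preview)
Your proposal is correct and follows essentially the same route as the paper: both arguments feed Lemma~\ref{furthlemma} into the first slot of $\varrho$, use the monotonicity of $\varrho$ in the other two slots together with $\Vert f\Vert_{I}\ge\Vert P_{d}f\Vert_{I}$ and $\alpha(f)\ge\alpha(P_{d}f)$, invoke the fourth property of $\varrho$ to extract the additive gain $\tfrac14\cdot\tfrac{1}{2^{2d+4}}\Vert f-P_{d}f\Vert$, and finish with \eqref{renII03}. The only cosmetic differences are that the paper quotes the full inequality of Lemma~\ref{furthlemmaI} (and then drops the extra term anyway) and applies the fourth property after lowering $s,t$ rather than before, and that you are more explicit about the degenerate cases $P_{d}f=0$ and $f=P_{d}f$ needed for the positivity hypotheses in Fact~\ref{rhonorm}.
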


\begin{proof}
Using Lemma~\ref{furthlemma} and Lemma~\ref{furthlemmaI}, we can compute
\begin{align*}
\Vert & f \Vert_{II} = \varrho\big( \Vert f \Vert, \Vert f \Vert_{I}, \alpha(f) \big) \\
 & \geq \varrho\Big( \Vert P_{d}f \Vert + \frac{1}{2^{2d+4}} \Vert f - P_{d}f \Vert, \Vert P_{d}f \Vert_{I} + \frac{1}{2^{2d+7}} \Vert f - P_{d}f \Vert_{I}, \alpha(f) \Big) \\
 & \geq \varrho\Big( \Vert P_{d}f \Vert + \frac{1}{2^{2d+4}} \Vert f - P_{d}f \Vert, \Vert P_{d}f \Vert_{I}, \alpha(P_{d}f) \Big) \\
 & \geq \varrho\big( \Vert P_{d}f \Vert, \Vert P_{d}f \Vert_{I}, \alpha(P_{d}f) \big) + \frac{1}{4} \cdot \frac{1}{2^{2d+4}} \Vert f - P_{d}f \Vert \\
 & \geq \Vert P_{d}f \Vert_{II} + \frac{1}{4} \cdot \frac{1}{2^{2d+4}} \cdot \frac{1}{2} \Vert f - P_{d}f \Vert_{II},
\end{align*}
which proves the lemma.
\end{proof}

\section{Amalgamations of Asplund and reflexive spaces}

In the final stage of the proof of Theorem~\ref{thmmain}, we need some further notation. We introduce a coding of all rational Banach spaces whose basis is monotone. This enables us to provide a version of the Pe\l czy\'nski universal space.

\begin{definition}
We fix a system $ \{(Z_{\eta}, \Vert \cdot \Vert_{\eta})\}_{\eta \in \mathbb{N}^{< \mathbb{N}}} $ of rational Banach spaces which satisfies the following requirements.

(a) For every $ \eta $, the basis of $ Z_{\eta} $, denoted by $ z^{\eta}_{1}, z^{\eta}_{2}, \dots, z^{\eta}_{|\eta|} $, is monotone and consists of $ |\eta| $ members in a way such that, for any two comparable sequences $ \eta \subset \nu $, the space $ Z_{\nu} $ is an extension of $ Z_{\eta} $ in the sense that the basis $ z^{\eta}_{1}, z^{\eta}_{2}, \dots, z^{\eta}_{|\eta|} $ is $ 1 $-equivalent with $ z^{\nu}_{1}, z^{\nu}_{2}, \dots, z^{\nu}_{|\eta|} $.

(b) Every monotone rational extension of $ Z_{\eta} $ is included as $ Z_{\nu} $ for some $ \nu \supset \eta $. More precisely, if $ Z $ is a rational space whose basis $ z_{1}, z_{2}, \dots, z_{d} $ is monotone and such that $ z^{\eta}_{1}, z^{\eta}_{2}, \dots, z^{\eta}_{|\eta|} $ is $ 1 $-equivalent with $ z_{1}, z_{2}, \dots, z_{|\eta|} $, then there is a $ \nu \supset \eta $ with $ |\nu| = d $ such that $ z^{\nu}_{1}, z^{\nu}_{2}, \dots, z^{\nu}_{|\nu|} $ is $ 1 $-equivalent with $ z_{1}, z_{2}, \dots, z_{d} $.
\end{definition}

\begin{definition}
For every $ \varphi \in \mathbb{N}^{\mathbb{N}} $, let $ (Z_{\varphi}, \Vert \cdot \Vert_{\varphi}) $ be a Banach space with a monotone basis $ z^{\varphi}_{1}, z^{\varphi}_{2}, \dots $ such that, for every $ \eta \subset \varphi $, the basis $ z^{\eta}_{1}, z^{\eta}_{2}, \dots, z^{\eta}_{|\eta|} $ of $ Z_{\eta} $ is $ 1 $-equivalent with $ z^{\varphi}_{1}, z^{\varphi}_{2}, \dots, z^{\varphi}_{|\eta|} $.
\end{definition}

\begin{definition} \label{univpelcz}
Let $ U $ be a completion of $ c_{00}(\mathbb{N}^{< \mathbb{N}} \setminus \{ \emptyset \} ) $ with the norm defined by one of the equivalent formulae
\begin{equation}
\Vert x \Vert = \sup_{\nu \in \mathbb{N}^{< \mathbb{N}}} \Big\Vert \sum_{\eta \subset \nu} x(\eta) z^{\nu}_{|\eta|} \Big\Vert_{\nu},
\end{equation}
\begin{equation}
\Vert x \Vert = \sup_{\varphi \in \mathbb{N}^{\mathbb{N}}} \Big\Vert \sum_{\eta \subset \varphi} x(\eta) z^{\varphi}_{|\eta|} \Big\Vert_{\varphi}.
\end{equation}
Further, let $ \varpi : \mathbb{N} \to \mathbb{N}^{< \mathbb{N}} \setminus \{ \emptyset \} $ be a fixed non-decreasing bijection and let
\begin{equation}
u_{i} = \mathbf{1}_{\{ \varpi(i) \}}, \quad i \in \mathbb{N}.
\end{equation}

As the space $ U $ is defined according to Definition~\ref{def:E}, several remarkable properties follow. First of all, the sequence $ u_{1}, u_{2}, \dots $ is a monotone basis of $ U $. If we denote
\begin{equation} \label{U001}
\Delta : \varphi \in \mathbb{N}^{\mathbb{N}} \mapsto \{ \varpi^{-1}((\varphi_{1})) < \varpi^{-1}((\varphi_{1}, \varphi_{2})) < \dots \} \subset \mathbb{N},
\end{equation}
then, using Fact~\ref{factisigma}, the sequences $ \{ z^{\varphi}_{n} : n \in \mathbb{N} \} $ and $ \{ u_{i} : i \in \Delta(\varphi) \} $ are $ 1 $-equivalent for every $ \varphi \in \mathbb{N}^{\mathbb{N}} $. The copy $ \overline{\mathrm{span}} \{ u_{i} : i \in \Delta(\varphi) \} $ of $ Z_{\varphi} $ is $ 1 $-complemented in $ U $. Moreover, due to Proposition~\ref{rational}, every Banach space $ X $ with a monotone basis has an $ 1 $-complemented isometric copy in $ Z_{\varphi} $ for some $ \varphi \in \mathbb{N}^{\mathbb{N}} $. It follows that $ X $ has an $ 1 $-complemented isometric copy also in $ U $.

We note that the space $ U $, including its construction and properties, is fairly similar to the space constructed and studied in \cite{garbulinska1}.
\end{definition}

\begin{lemma} \label{lemmaszlenki}
Let $ \mathcal{C} $ be an analytic set of Banach spaces with separable dual. Then there is a $ \beta < \omega_{1} $ such that $ \mathrm{Sz}(\ell_{2}(X)) \leq \beta $ for every $ X \in \mathcal{C} $.
\end{lemma}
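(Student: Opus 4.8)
The plan is to combine the classical boundedness theorem for the Szlenk rank with a Borel parametrisation of the $ \ell_2 $-sum operation. Recall that the set $ \mathrm{SD} $ of separable Banach spaces with separable dual coincides with $ \{ X \in \mathcal{SE}(C([0,1])) : \mathrm{Sz}(X) < \omega_1 \} $, that $ \mathrm{SD} $ is coanalytic in $ \mathcal{SE}(C([0,1])) $, and that $ X \mapsto \mathrm{Sz}(X) $ is a $ \Pi^1_1 $-rank on $ \mathrm{SD} $; these facts go back to Bossard (see \cite{bossard2}, cf.\ \cite{dodostopics}). By the boundedness theorem for $ \Pi^1_1 $-ranks (see e.g. \cite[(35.23)]{kechris}), every analytic set $ \mathcal{A} \subset \mathrm{SD} $ satisfies $ \sup_{X \in \mathcal{A}} \mathrm{Sz}(X) < \omega_1 $. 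Since $ \mathrm{Sz} $ is an isometric invariant, it therefore suffices to exhibit an analytic set $ \mathcal{A} \subset \mathrm{SD} $ containing, for every $ X \in \mathcal{C} $, a space isometric to $ \ell_2(X) $.

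To produce such an $ \mathcal{A} $, I would first fix a linear isometric embedding of the separable space $ \ell_2(C([0,1])) $ into $ C([0,1]) $, so that for each closed linear subspace $ X \subset C([0,1]) $ the space $ \ell_2(X) $ is canonically realised as a closed linear subspace $ \widehat{\ell_2}(X) \subset C([0,1]) $. By the Kuratowski--Ryll-Nardzewski selection theorem there are Borel maps $ d_n : \mathcal{SE}(C([0,1])) \to C([0,1]) $ with $ \{ d_n(X) : n \in \mathbb{N} \} $ dense in $ X $. The finitely supported sequences whose entries are rational linear combinations of the $ d_n(X) $, viewed as elements of $ \widehat{\ell_2}(X) $, then form a countable dense subset of $ \widehat{\ell_2}(X) $, and each of these vectors is a Borel function of $ X $. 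Testing the Effros-Borel generators $ \{ F : F \cap U \neq \emptyset \} $ against this dense set shows that $ X \mapsto \widehat{\ell_2}(X) $ is a Borel map $ \Theta : \mathcal{SE}(C([0,1])) \to \mathcal{SE}(C([0,1])) $ with $ \Theta(X) $ isometric to $ \ell_2(X) $.

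Now put $ \mathcal{A} = \Theta(\mathcal{C}) $. As $ \mathcal{C} $ is analytic and $ \Theta $ is Borel, $ \mathcal{A} $ is analytic. For $ X \in \mathcal{C} $ the dual $ X^* $ is separable, hence $ (\ell_2(X))^* = \ell_2(X^*) $ is separable, so $ \Theta(X) $ is isometric to $ \ell_2(X) \in \mathrm{SD} $; thus $ \mathcal{A} \subset \mathrm{SD} $. Applying the boundedness theorem to $ \mathcal{A} $ yields $ \beta := \sup_{X \in \mathcal{C}} \mathrm{Sz}(\Theta(X)) < \omega_1 $, and since $ \mathrm{Sz}(\ell_2(X)) = \mathrm{Sz}(\Theta(X)) $ we conclude $ \mathrm{Sz}(\ell_2(X)) \le \beta $ for every $ X \in \mathcal{C} $.

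The main obstacle is the verification that $ \Theta $ is Borel: this is routine once the selectors $ d_n $ are available, but it requires a little care in pinning down a $ (d_n) $-definable dense subset of $ \ell_2(X) $ and in handling the fixed embedding of $ \ell_2(C([0,1])) $ into $ C([0,1]) $. I also note a route that avoids the parametrisation altogether: invoking the known ordinal estimate for Szlenk indices of $ \ell_p $-sums — that $ \mathrm{Sz}(X) \le \omega^{\alpha} $ implies $ \mathrm{Sz}(\ell_2(X)) \le \omega^{\alpha+1} $ — one may apply the boundedness theorem directly to $ \mathcal{C} \subset \mathrm{SD} $ to get $ \gamma := \sup_{X \in \mathcal{C}} \mathrm{Sz}(X) < \omega_1 $, and then take $ \beta = \omega^{\alpha+1} $ for any $ \alpha $ with $ \omega^{\alpha} \ge \gamma $.
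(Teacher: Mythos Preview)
Your argument is correct and follows essentially the same route as the paper: fix an isometric embedding of $\ell_2(C([0,1]))$ into $C([0,1])$ to obtain a Borel map $X \mapsto \ell_2(X)$, push $\mathcal{C}$ forward to an analytic subset of $\mathrm{SD}$, and apply the boundedness theorem for the Szlenk rank. You supply more detail on the Borel measurability (which the paper merely asserts) and additionally sketch a valid alternative via the quantitative estimate for $\mathrm{Sz}(\ell_2(X))$.
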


\begin{proof}
It follows from \cite[Theorem~4.11]{bossard2} and \cite[Proposition~0.1(ii)]{bossard2} that $ \sup \{ \mathrm{Sz}(X) : X \in \mathcal{C}' \} < \omega_{1} $ for any analytic set $ \mathcal{C}' $ of Banach spaces with separable dual. So, it is sufficient to find an analytic set $ \mathcal{C}' $ which contains an isomorphic copy of $ \ell_{2}(X) $ for every $ X \in \mathcal{C} $ and every $ Y \in \mathcal{C}' $ is isomorphic to $ \ell_{2}(X) $ for some $ X \in \mathcal{C} $.

Let us consider an isometry $ I : \ell_{2}(C([0, 1])) \to C([0, 1]) $ and let $ \kappa : C([0, 1]) \to C([0, 1]) $ be defined by $ \kappa(X) = I(\ell_{2}(X)) $ where $ \ell_{2}(X) $ is considered as a subspace of $ \ell_{2}(C([0, 1])) $. As $ \kappa $ is a Borel mapping, $ \mathcal{C}' = \kappa(\mathcal{C}) $ works.
\end{proof}

\begin{lemma} \label{lemmaszlenkii}
For every $ \beta < \omega_{1} $, the set
\begin{equation} \label{lemmaszlenkii1}
\mathcal{A} = \big\{ \varphi \in \mathbb{N}^{\mathbb{N}} : \textrm{$ \mathrm{Sz}(Z_{\varphi}) \leq \beta $ and $ z^{\varphi}_{1}, z^{\varphi}_{2}, \dots $ is shrinking} \big\}
\end{equation}
is Borel in $ \mathbb{N}^{\mathbb{N}} $.
\end{lemma}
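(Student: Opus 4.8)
The plan is to exhibit $\mathcal{A}$ as an intersection of two Borel sets, namely $\mathcal{A}_1=\{\varphi:\mathrm{Sz}(Z_\varphi)\le\beta\}$ and $\mathcal{A}_2=\{\varphi: z^\varphi_1,z^\varphi_2,\dots \text{ is shrinking}\}$. In fact it suffices to prove that $\mathcal{A}_1$ is Borel and that $\mathcal{A}_2$ is Borel relative to $\mathcal{A}_1$, since then $\mathcal{A}=\mathcal{A}_1\cap\mathcal{A}_2$ is Borel. The first two paragraphs below handle the Szlenk part, the third the shrinking part; the genuine work is concentrated in the last paragraph.

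First I would record that $\varphi\mapsto Z_\varphi$ is Borel in a strong sense. By monotonicity of the bases together with the $1$-equivalence imposed in the definition of $Z_\varphi$, for a finitely supported $x\in c_{00}$ the number $\Vert\sum_n x(n)z^\varphi_n\Vert_\varphi$ coincides with $\Vert\sum_n x(n)z^\eta_n\Vert_\eta$ for every $\eta\subset\varphi$ whose length exceeds the support of $x$; hence $\varphi\mapsto\Vert\cdot\Vert_\varphi$ is locally constant on rational finitely supported vectors. The standard construction then produces a Borel map $\Theta:\mathbb{N}^{\mathbb{N}}\to\mathcal{SE}(C([0,1]))$ with $\Theta(\varphi)$ linearly isometric to $Z_\varphi$ via an isometry carrying $z^\varphi_1,z^\varphi_2,\dots$ onto a Borel-chosen basic sequence; consequently the norms of rational combinations of the $z^\varphi_n$ and of their biorthogonal functionals $(z^\varphi_n)^*$, and the values at such combinations of fixed measures on $[0,1]$, are all Borel functions of $\varphi$. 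Since the Szlenk index is a $\Pi^1_1$-rank on $\mathcal{SE}(C([0,1]))$, the set $\{X:\mathrm{Sz}(X)\le\beta\}$ is Borel (cf.\ \cite[Theorem~4.11]{bossard2}, used already in the proof of Lemma~\ref{lemmaszlenki}), and therefore $\mathcal{A}_1=\Theta^{-1}(\{X:\mathrm{Sz}(X)\le\beta\})$ is Borel.

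Next, fix $\varphi\in\mathcal{A}_1$, so that $Z_\varphi^*$ is separable. As $z^\varphi_1,z^\varphi_2,\dots$ is monotone, its biorthogonal functionals $(z^\varphi_n)^*$ form a norming basic sequence in $Z_\varphi^*$, and the basis is shrinking if and only if $\overline{\mathrm{span}}\{(z^\varphi_n)^*:n\in\mathbb{N}\}=Z_\varphi^*$. Choosing a countable norm-dense set $\{g^\varphi_j:j\in\mathbb{N}\}$ in $B_{Z_\varphi^*}$, this equality holds if and only if each $g^\varphi_j$ belongs to that closed span, i.e.
$$ \forall j\ \forall k\ \exists N\ \exists (q_1,\dots,q_N)\in\mathbb{Q}^N:\ \Big\Vert g^\varphi_j-\sum_{n=1}^N q_n (z^\varphi_n)^*\Big\Vert_{Z_\varphi^*}<\tfrac1k. $$
The norm here equals $\sup\{|g^\varphi_j(z)-\sum_{n\le N}q_n(z^\varphi_n)^*(z)| : z\in B_{Z_\varphi}\}$, a supremum over a Borel-computable norm-dense subset of $B_{Z_\varphi}$, hence a Borel function of $\varphi$; as the displayed condition uses only countable quantifiers over such functions, $\mathcal{A}_1\cap\mathcal{A}_2$ is Borel, and $\mathcal{A}=\mathcal{A}_1\cap\mathcal{A}_2$ follows.

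The step requiring genuine care — and the one I expect to be the main obstacle — is producing, in a Borel way over $\mathcal{A}_1$, the norm-dense sequences $g^\varphi_j$ in $B_{Z_\varphi^*}$. Realizing $Z_\varphi$ as a subspace of $C([0,1])$ one has $Z_\varphi^*=M([0,1])/Z_\varphi^{\perp}$, and one must Borel-select a countable dense subset of the ball of this (norm-separable) quotient; a fixed sequence that is only weak${}^*$-dense in $B_{M([0,1])}$ does not suffice, because the map $g\mapsto\Vert(I-P_n)^*g\Vert$ is only weak${}^*$-lower semicontinuous, so weak${}^*$-density cannot detect whether $\overline{\mathrm{span}}\{(z^\varphi_n)^*\}$ is all of $Z_\varphi^*$. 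This is precisely where the hypothesis $\mathrm{Sz}(Z_\varphi)\le\beta$ is used essentially: over the full $\Pi^1_1$ collection of spaces with separable dual no such Borel selection exists, but over the Borel set $\mathcal{A}_1$ it can be extracted from Bossard's analysis of the Szlenk derivative (the derivatives of $B_{Z_\varphi^*}$ vanish at a uniformly bounded, Borel stage), together with a Kuratowski--Ryll-Nardzewski type selection; the remaining verifications are routine bookkeeping.
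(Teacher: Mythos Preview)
Your treatment of $\mathcal{A}_1$ is fine, but the handling of the shrinking condition has a real gap. The crucial step --- Borel-selecting, for each $\varphi\in\mathcal{A}_1$, a norm-dense sequence $(g^\varphi_j)_j$ in $B_{Z_\varphi^*}$ --- is precisely the heart of the matter, and you dismiss it as ``routine bookkeeping'' after gesturing at Szlenk derivations and Kuratowski--Ryll-Nardzewski. It is not routine: you are asking for a Borel parametrization of norm-dense subsets of the balls of a Borel family of separable duals, realized as quotients of the non-separable space $M([0,1])$. Results of this type do exist in the literature, but they are themselves substantial theorems; without a precise citation or an actual argument, this paragraph is not a proof, and your proposal as written stops exactly at the hard point.

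The paper avoids this difficulty entirely by a trick you have overlooked. Recall that the universal space $U$ of Definition~\ref{univpelcz} comes with a fixed monotone basis $(u_i)$ and a continuous map $\Delta:\mathbb{N}^{\mathbb{N}}\to[\mathbb{N}]^{\infty}$ such that, for every $\varphi$, the basis $(z^\varphi_n)$ of $Z_\varphi$ is $1$-equivalent to the subsequence $(u_i)_{i\in\Delta(\varphi)}$. Thus all the spaces $Z_\varphi$ are realized \emph{simultaneously} inside one ambient space, as spans of subsequences of one fixed basic sequence. Bossard's results (\cite[Theorem~5.4(i) and Proposition~0.1(i)]{bossard2}) give directly that the set $\mathcal{B}$ of infinite $M\subset\mathbb{N}$ for which $(u_i)_{i\in M}$ is shrinking and $\mathrm{Sz}(\overline{\mathrm{span}}\{u_i:i\in M\})\le\beta$ is Borel, and then $\mathcal{A}=\Delta^{-1}(\mathcal{B})$. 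The point is that working with subsequences of a \emph{single} basis in a \emph{single} space lets one invoke an off-the-shelf Borel result, whereas your approach re-embeds each $Z_\varphi$ separately and is then forced to reconstruct a uniform Borel description of the duals.
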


\begin{proof}
By \cite[Theorem~5.4(i)]{bossard2} and \cite[Proposition~0.1(i)]{bossard2}, the set
\begin{align*}
\mathcal{B} = \Big\{ \{ i_{1} < i_{2} < \dots \} \subset \mathbb{N} : \mathrm{Sz} & (\overline{\mathrm{span}} \{ u_{i_{1}}, u_{i_{2}}, \dots \} ) \leq \beta \\
 & \textrm{and $ u_{i_{1}}, u_{i_{2}}, \dots $ is shrinking} \Big\}
\end{align*}
is Borel in the space of all subsets of $ \mathbb{N} $. As $ \Delta $ is a continuous mapping, it remains to realize that $ \mathcal{A} = \Delta^{-1}(\mathcal{B}) $.
\end{proof}

\begin{proof}[Proof of Theorem \ref{thmmain}, Part 1. (Shrinking basis case.)]
Let $ \mathcal{C} $ be an analytic set of Banach spaces such that every member admits a monotone shrinking basis. Let $ \beta < \omega_{1} $ be as in Lemma~\ref{lemmaszlenki} and let $ \mathcal{A} $ be given by \eqref{lemmaszlenkii1}. By Lemma~\ref{lemmaszlenkii}, $ \mathcal{A} $ is Borel, and thus analytic. Notice that Proposition~\ref{rational} guarantees that every $ X \in \mathcal{C} $ has an $ 1 $-complemented isometric copy in $ Z_{\varphi} $ for some $ \varphi \in \mathcal{A} $.

By Lemma~\ref{prunedtree}, there is an unrooted pruned tree $ T $ on $ \mathbb{N} \times \mathbb{N} $ such that $ \mathcal{A} = p[T] $ where $ p $ denotes the projection on the first coordinate. Let us consider the collection
$$ (F_{\sigma}, \Vert \cdot \Vert_{\sigma}) = (Z_{p(\sigma)}, \Vert \cdot \Vert_{p(\sigma)}), \quad f^{\sigma}_{n} = z^{p(\sigma)}_{n}, \quad \sigma \in [T], \, n \in \mathbb{N}. $$
In this way, the collection $ F_{\sigma}, \sigma \in [T], $ consists of the same spaces as the collection $ Z_{\varphi}, \varphi \in \mathcal{A} $.

Finally, let $ E $ be the space constructed in Definition~\ref{def:E} for this collection. This space admits the required properties, due to Fact~\ref{factisigma} and Proposition~\ref{shrink}.
\end{proof}

\begin{lemma} \label{lemmarefl}
For an analytic set $ \mathcal{C} $ of Banach spaces, the set
\begin{equation} \label{lemmarefl1}
\mathcal{A} = \big\{ \varphi \in \mathbb{N}^{\mathbb{N}} : \textrm{$ Z_{\varphi} $ is isomorphic to $ \ell_{2}(X) $ for some $ X \in \mathcal{C} $} \big\}
\end{equation}
is analytic in $ \mathbb{N}^{\mathbb{N}} $.
\end{lemma}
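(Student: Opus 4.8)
The plan is to express membership in $\mathcal{A}$ using only standard closure properties of analytic sets, by combining three ingredients: a Borel realization of the coding $\varphi \mapsto Z_\varphi$ as a map into $\mathcal{SE}(C([0,1]))$, the Borel map $X \mapsto \ell_2(X)$ already used in the proof of Lemma~\ref{lemmaszlenki}, and the classical fact that the isomorphism relation on $\mathcal{SE}(C([0,1]))$ is analytic.

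\emph{Step 1 (Borel coding map).} First I would fix a linear isometric embedding $j : U \to C([0,1])$, with closed range, of the Pe\l czy\'nski space $U$ from Definition~\ref{univpelcz} (this exists since $U$ is separable). By the properties recorded after Definition~\ref{univpelcz}, $Z_\varphi$ is linearly isometric to $\overline{\mathrm{span}}\{u_i : i \in \Delta(\varphi)\}$, where $\Delta : \mathbb{N}^{\mathbb{N}} \to 2^{\mathbb{N}}$ from \eqref{U001} is continuous. The assignment $D \mapsto \overline{\mathrm{span}}\{u_i : i \in D\}$ is a Borel map $2^{\mathbb{N}} \to \mathcal{F}(U)$: for open $V \subset U$ the set $\{D : \overline{\mathrm{span}}\{u_i : i \in D\} \cap V \neq \emptyset\}$ equals the union, over finite $D_0 \subset \mathbb{N}$ and rational tuples $(q_i)_{i \in D_0}$ with $\sum_{i \in D_0} q_i u_i \in V$, of the clopen sets $\{D : D_0 \subset D\}$, hence is open, so the preimage of each generator of the Effros--Borel $\sigma$-algebra is Borel. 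Composing this map with $\Delta$ and pushing forward along $j$ (which is again Borel, $j$ being a closed isometric embedding) yields a Borel map $g : \mathbb{N}^{\mathbb{N}} \to \mathcal{SE}(C([0,1]))$ such that $g(\varphi)$ is isometric to $Z_\varphi$ for every $\varphi$.

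\emph{Step 2 (reduction to the isomorphism relation).} Exactly as in the proof of Lemma~\ref{lemmaszlenki}, there is a Borel map $\kappa : \mathcal{SE}(C([0,1])) \to \mathcal{SE}(C([0,1]))$ with $\kappa(X)$ isometric to $\ell_2(X)$; hence $\mathcal{C}' := \kappa(\mathcal{C})$ is analytic, and a space is isomorphic to $\ell_2(X)$ for some $X \in \mathcal{C}$ precisely when it is isomorphic to a member of $\mathcal{C}'$. By the classical theorem of Bossard \cite{bossard2}, the isomorphism relation $\mathrm{Isom} = \{(Y, W) \in \mathcal{SE}(C([0,1]))^2 : Y \cong W\}$ is analytic. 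Therefore $\mathcal{A} = \{\varphi : \exists\, W \in \mathcal{C}',\ (g(\varphi), W) \in \mathrm{Isom}\}$. Now the set $\{(\varphi, W) : (g(\varphi), W) \in \mathrm{Isom}\}$ is the preimage of the analytic set $\mathrm{Isom}$ under the Borel map $(\varphi, W) \mapsto (g(\varphi), W)$, hence analytic; intersecting it with the analytic set $\mathbb{N}^{\mathbb{N}} \times \mathcal{C}'$ and projecting onto the first coordinate exhibits $\mathcal{A}$ as a projection of an analytic set, so $\mathcal{A}$ is analytic.

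The only point that needs a little care is the Borel measurability of the coding map $g$ in Step 1, but this reduces to the routine verification above against the generators of the Effros--Borel structure; the substantive external input, namely the analyticity of the isomorphism relation, is classical and already belongs to the circle of tools used elsewhere in the paper.
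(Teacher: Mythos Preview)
Your proof is correct and follows essentially the same approach as the paper. The paper's version is slightly more streamlined: it defines the coding map directly as $\zeta(\varphi)=\overline{\mathrm{span}}\{I(\mathbf{1}_{\{(\varphi_1)\}}),I(\mathbf{1}_{\{(\varphi_1,\varphi_2)\}}),\dots\}$ (which is precisely your $g$ once $\Delta$ is unwound) and, instead of working with the isomorphism relation and projecting, it invokes the saturation form of Bossard's theorem to obtain the analytic set $\mathcal{C}''=\{Z:\ Z\cong Y\ \text{for some}\ Y\in\mathcal{C}'\}$ and then writes $\mathcal{A}=\zeta^{-1}(\mathcal{C}'')$ as a single Borel preimage of an analytic set.
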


\begin{proof}
It is easy to show (see the proof of Lemma~\ref{lemmaszlenki}) that there is an analytic set $ \mathcal{C}' $ which contains an isomorphic copy of $ \ell_{2}(X) $ for every $ X \in \mathcal{C} $ and every $ Y \in \mathcal{C}' $ is isomorphic to $ \ell_{2}(X) $ for some $ X \in \mathcal{C} $. By \cite[Theorem 2.3(i)]{bossard2}, the saturation
\begin{align*}
\mathcal{C}'' & = \big\{ Z \in \mathcal{SE}(C([0,1])) : \textrm{$ Z $ is isomorphic to some $ Y \in \mathcal{C}' $} \big\} \\
& = \big\{ Z \in \mathcal{SE}(C([0,1])) : \textrm{$ Z $ is isomorphic to $ \ell_{2}(X) $ for an $ X \in \mathcal{C} $} \big\}
\end{align*}
is analytic.

Let $ I : U \to C([0, 1]) $ be an isometry. It is easy to show that the mapping
$$ \zeta : \mathbb{N}^{\mathbb{N}} \to \mathcal{SE}(C([0,1])), \quad \varphi \mapsto \overline{\mathrm{span}} \{ I(\mathbf{1}_{\{ (\varphi_{1}) \}}), I(\mathbf{1}_{\{ (\varphi_{1}, \varphi_{2}) \}}), \dots \}, $$
is Borel. Due to Fact~\ref{factisigma}, the spaces $ Z_{\varphi} $ and $ \zeta(\varphi) $ are isometric. It follows that $ \mathcal{A} = \zeta^{-1}(\mathcal{C}'') $, and so that $ \mathcal{A} $ is analytic.
\end{proof}

\begin{proof}[Proof of Theorem \ref{thmmain}, Part 2. (Reflexive case.)]
Let $ \mathcal{C} $ be an analytic set of reflexive Banach spaces such that every member has a monotone basis. Let $ \mathcal{A} $ be given by \eqref{lemmarefl1}. By Lemma~\ref{lemmarefl}, $ \mathcal{A} $ is analytic. Notice that Proposition~\ref{rational} guarantees that every $ X \in \mathcal{C} $ has an $ 1 $-complemented isometric copy in $ Z_{\varphi} $ for some $ \varphi \in \mathcal{A} $. At the same time, the space $ Z_{\varphi} $ is reflexive for every $ \varphi \in \mathcal{A} $.

By Lemma~\ref{prunedtree}, there is an unrooted pruned tree $ T $ on $ \mathbb{N} \times \mathbb{N} $ such that $ \mathcal{A} = p[T] $ where $ p $ denotes the projection on the first coordinate. Let us consider the collection
$$ (F_{\sigma}, \Vert \cdot \Vert_{\sigma}) = (Z_{p(\sigma)}, \Vert \cdot \Vert_{p(\sigma)}), \quad f^{\sigma}_{n} = z^{p(\sigma)}_{n}, \quad \sigma \in [T], \, n \in \mathbb{N}. $$
In this way, the collection $ F_{\sigma}, \sigma \in [T], $ consists of the same spaces as the collection $ Z_{\varphi}, \varphi \in \mathcal{A} $.

Finally, let $ A $ be the space established in Definition~\ref{def:A} for this collection. This space admits the required properties, due to Facts~\ref{factA1}, \ref{factA2} and Proposition~\ref{reflexivity}.
\end{proof}

\section{Amalgamations of non-universal and rotund spaces}

\begin{definition}
Let $ \varphi \in \mathbb{N}^{\mathbb{N}} $ and let $ z^{*}_{1}, z^{*}_{2}, \dots $ denote the dual basic sequence of $ z^{\varphi}_{1}, z^{\varphi}_{2}, \dots $. Let us define seminorms
\begin{equation} \label{nurot01}
\alpha(z)^{2} = \sum_{i=1}^{\infty} \frac{1}{2^{4i}} |z^{*}_{i}(z)|^{2} \quad \Big( = \sum_{n=1}^{\infty} \sum_{k=1}^{\infty} \frac{1}{2^{4\pi^{-1}(n,k)}} |z^{*}_{\pi^{-1}(n,k)}(z)|^{2} \Big),
\end{equation}
\begin{equation} \label{nurot02}
\beta(z)^{2} = \sum_{n=1}^{\infty} \sum_{k=1}^{\infty} \frac{1}{2^{4\pi^{-1}(n+1,k)}} |z^{*}_{\pi^{-1}(n,k)}(z)-2z^{*}_{\pi^{-1}(n+1,k)}(z)|^{2},
\end{equation}
where $ \pi $ is introduced in Definition~\ref{def:pi}. Let us further define
\begin{equation} \label{nurot03}
Z_{\varphi}^{I} = \{ z \in Z_{\varphi} : \beta(z) < \infty \}, \quad Z_{\varphi}^{II} = \{ z \in Z_{\varphi}^{I} : \alpha(z) < \infty \},
\end{equation}
\begin{equation} \label{nurot04}
\Vert z \Vert_{\varphi, I}^{2} = \Vert z \Vert_{\varphi}^{2} + \frac{1}{2^{7}} \beta(z)^{2}, \quad z \in Z_{\varphi}^{I},
\end{equation}
\begin{equation} \label{nurot05}
\Vert z \Vert_{\varphi, II} = \varrho\big( \Vert z \Vert_{\varphi}, \Vert z \Vert_{\varphi, I}, \alpha(z) \big), \quad z \in Z_{\varphi}^{II},
\end{equation}
where $ \varrho $ is a norm given by Fact~\ref{rhonorm}.
\end{definition}

\begin{definition}
The subspace of $ (S_{C([0, 1])})^{\mathbb{N}} $ consisting of all normalized monotone basic sequences will be denoted by $ \mathcal{M} $.
\end{definition}

The following proposition summarizes most of the results from Sections~\ref{sec:branches}, \ref{sec:renormingI} and \ref{sec:renormingII}.

\begin{proposition} \label{summ}
There exists a Borel mapping $ \Theta : \mathcal{M} \to \mathbb{N}^{\mathbb{N}} $ such that, for every $ (e_{1}, e_{2}, \dots) \in \mathcal{M} $, if we denote $ \varphi = \Theta(e_{1}, e_{2}, \dots) $ and $ X = \overline{\mathrm{span}} \{ e_{1}, e_{2}, \dots \} $, then:

{\rm (1)} $ Z_{\varphi}^{I} = Z_{\varphi}^{II} = Z_{\varphi} $ and the norms fulfill
$$ \Vert z \Vert_{\varphi} \leq \Vert z \Vert_{\varphi, I} \leq 2 \Vert z \Vert_{\varphi}, \quad \Vert z \Vert_{\varphi} \leq \Vert z \Vert_{\varphi, II} \leq 2 \Vert z \Vert_{\varphi}, \quad z \in Z_{\varphi}. $$

{\rm (2)} Both $ Z_{\varphi}^{I} $ and $ Z_{\varphi}^{II} $ contain an $ 1 $-complemented isometric copy of $ X $.

{\rm (3)} If $ X $ is not isometrically universal for all separable Banach spaces, then $ Z_{\varphi}^{I} $ is also non-universal.

{\rm (4)} If $ X $ is strictly convex, then $ Z_{\varphi}^{II} $ is also strictly convex.

{\rm (5)} We have
$$ \Vert P_{n}z \Vert_{\varphi, I}^{2} \geq \Vert P_{n-1}z \Vert_{\varphi, I}^{2} + \Big( \frac{7}{2^{2n+8}} \Big)^{2} |z^{*}_{n}(z)|^{2}, \quad z \in Z_{\varphi}^{I}, \, n \in \mathbb{N}, $$
$$ \Vert P_{n}z \Vert_{\varphi, II}^{2} \geq \Vert P_{n-1}z \Vert_{\varphi, II}^{2} + \Big( \frac{7}{2^{2n+8}} \Big)^{2} |z^{*}_{n}(z)|^{2}, \quad z \in Z_{\varphi}^{II}, \, n \in \mathbb{N}, $$
where $ z^{*}_{1}, z^{*}_{2}, \dots $ is the dual basic sequence and $ P_{0}, P_{1}, \dots $ is the sequence of partial sum operators associated with the basis $ z^{\varphi}_{1}, z^{\varphi}_{2}, \dots $ .
\end{proposition}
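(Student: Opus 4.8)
The plan is to define $\Theta$ so that $\varphi = \Theta(e_{1}, e_{2}, \dots)$ codes, through the fixed family $\{ (Z_{\eta}, \Vert \cdot \Vert_{\eta}) \}_{\eta \in \mathbb{N}^{< \mathbb{N}}}$, exactly the space $(F, \Vert \cdot \Vert)$ built from $(e_{1}, e_{2}, \dots)$ in Definition~\ref{ratconstr}; then (1)--(5) are obtained by transporting the results of Sections~\ref{sec:branches}, \ref{sec:renormingI} and \ref{sec:renormingII} through the resulting isometry.

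First I would fix $(e_{1}, e_{2}, \dots) \in \mathcal{M}$, put $X = \overline{\mathrm{span}} \{ e_{1}, e_{2}, \dots \}$, and form $F = F(e_{1}, e_{2}, \dots)$ as in Definition~\ref{ratconstr}, with its monotone basis $f_{1}, f_{2}, \dots$, norm $\Vert \cdot \Vert$ and subspaces $F_{d} = \mathrm{span} \{ f_{1}, \dots, f_{d} \}$. By Lemma~\ref{ratlemma}, each $(F_{d}, \Vert \cdot \Vert)$ is a rational space whose monotone basis $f_{1}, \dots, f_{d}$ extends the monotone basis $f_{1}, \dots, f_{d-1}$ of $(F_{d-1}, \Vert \cdot \Vert)$; thus $(F_{d}, \Vert \cdot \Vert)$ is a monotone rational extension of $(F_{d-1}, \Vert \cdot \Vert)$. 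Starting from $\eta_{0} = \emptyset$ and using property (b) of $\{ Z_{\eta} \}$ at each step (which guarantees that the required node exists), I would recursively choose $\eta_{d-1} \subset \eta_{d}$ with $|\eta_{d}| = d$ so that the basis $z^{\eta_{d}}_{1}, \dots, z^{\eta_{d}}_{d}$ of $Z_{\eta_{d}}$ is $1$-equivalent to $f_{1}, \dots, f_{d}$, always taking the least admissible $\eta_{d}$; let $\varphi = \Theta(e_{1}, e_{2}, \dots)$ be the branch with these initial segments. By property (a) and the choice of $\varphi$, the basis $z^{\varphi}_{1}, z^{\varphi}_{2}, \dots$ is $1$-equivalent to $f_{1}, f_{2}, \dots$, so there is a surjective linear isometry $J \colon Z_{\varphi} \to (F, \Vert \cdot \Vert)$ with $J z^{\varphi}_{n} = f_{n}$; it intertwines the partial sum operators $P_{n}$, and its adjoint sends $f^{*}_{n}$ to $z^{*}_{n}$.

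The step I expect to cost the most work is checking that $\Theta$ is Borel. The passage from the index sequence $(l_{1}, l_{2}, \dots)$ of Definition~\ref{ratconstr} to $\varphi$ is governed by a fixed recursion that uses, at stage $d$, only $l_{1}, \dots, l_{d}$ and $\eta_{d-1}$ (the norm of $(F_{d}, \Vert \cdot \Vert)$ and the relevant coding nodes being determined by these finite data), so $(l_{d})_{d} \mapsto \varphi$ is continuous; hence it suffices to see that each $(e_{1}, e_{2}, \dots) \mapsto l_{d}$ is Borel. For a fixed candidate $j$, that the fixed monotone rational norm $|\cdot|_{d,j}$ on $\mathbb{R}^{d} \cong F_{d}$ satisfies \eqref{rat02} means that
\[ \Big( 1 - \tfrac{1}{2^{2d+1}} \Big) \Vert a_{1} f_{1} + \dots + a_{d} f_{d} \Vert_{\ell_{2}(X)} \leq \big| (a_{1}, \dots, a_{d}) \big|_{d,j} \leq \Big( 1 - \tfrac{1}{2^{2d+2}} \Big) \Vert a_{1} f_{1} + \dots + a_{d} f_{d} \Vert_{\ell_{2}(X)} \]
for every $(a_{1}, \dots, a_{d})$ in a fixed countable dense set. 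Since $f_{i} = e_{\pi(i)}$ occupies one of finitely many coordinates of $\ell_{2}(X)$, each of which is a finite linear combination of $e_{1}, \dots, e_{m}$, the quantity $\Vert a_{1} f_{1} + \dots + a_{d} f_{d} \Vert_{\ell_{2}(X)}$ is, for fixed $(a_{i})$, a continuous function of $(e_{1}, e_{2}, \dots) \in (S_{C([0,1])})^{\mathbb{N}}$, being assembled from sup-norms of finite sums of the $e_{i}$. Hence the set of sequences for which $|\cdot|_{d,j}$ satisfies \eqref{rat02} is closed, and $l_{d}$, the least such $j$, is Borel.

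Finally I would transport the conclusions through $J$. Since $J z^{\varphi}_{n} = f_{n}$ and $e_{(n,k)} = f_{\pi^{-1}(n,k)}$, we have $z^{*}_{n} = J^{*} f^{*}_{n}$ and $e^{*}_{(n,k)} = f^{*}_{\pi^{-1}(n,k)}$, so comparing \eqref{renI01} with \eqref{nurot02} and \eqref{renII01} with \eqref{nurot01} gives $\beta(Jz) = \beta(z)$ and $\alpha(Jz) = \alpha(z)$ for $z \in Z_{\varphi}$. As $\beta$ and $\alpha$ are finite on all of $F$ (Lemmas~\ref{betabound} and \ref{alphabound}), the same holds on $Z_{\varphi}$, so $Z_{\varphi}^{I} = Z_{\varphi}^{II} = Z_{\varphi}$ and $J$ is also a surjective isometry from $(Z_{\varphi}, \Vert \cdot \Vert_{\varphi, I})$ onto $(F, \Vert \cdot \Vert_{I})$ and from $(Z_{\varphi}, \Vert \cdot \Vert_{\varphi, II})$ onto $(F, \Vert \cdot \Vert_{II})$; the estimates in (1) are then \eqref{renI05} and \eqref{renII03} transported through $J$. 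Item (2) follows from Lemmas~\ref{operUI} and \ref{operUII}, the copy being $J^{-1}(UX)$ with $1$-complemented projection $J^{-1} \circ (UT) \circ J$; item (3) follows from Proposition~\ref{nonuniv} and item (4) from Proposition~\ref{strictconv}. For (5), writing $g = J z$ and applying Lemma~\ref{furthlemmaI} with $d = n-1$ to $P_{n} g \in F$ gives
\[ \Vert P_{n} g \Vert_{I} \geq \Vert P_{n-1} g \Vert_{I} + \tfrac{1}{2^{2n+5}} \, |f^{*}_{n}(g)| \, \Vert f_{n} \Vert_{I}, \]
and $\Vert f_{n} \Vert_{I} \geq \Vert f_{n} \Vert \geq \tfrac{7}{8} \Vert f_{n} \Vert_{\ell_{2}(X)} = \tfrac{7}{8}$ by \eqref{renI05}, \eqref{rat06} and the normalization of $e_{n}$; squaring and discarding the nonnegative cross term yields $\Vert P_{n} z \Vert_{\varphi, I}^{2} \geq \Vert P_{n-1} z \Vert_{\varphi, I}^{2} + \big( \frac{7}{2^{2n+8}} \big)^{2} |z^{*}_{n}(z)|^{2}$, and the identical computation with Lemma~\ref{furthlemmaII} gives the $\Vert \cdot \Vert_{\varphi, II}$-version.
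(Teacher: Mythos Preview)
Your proposal is correct and follows essentially the same approach as the paper: show that each $l_{d}$ is Borel because the set where \eqref{rat02} holds for a fixed rational norm is closed, observe that $\varphi$ is determined by $(l_{1},\dots,l_{d})$ via \eqref{rat07} so that $(l_{d})_{d}\mapsto\varphi$ is continuous, identify $(Z_{\varphi},\Vert\cdot\Vert_{\varphi})$ with $(F,\Vert\cdot\Vert)$ through the basis-preserving isometry, and then read off (1)--(5) from \eqref{renI05}, \eqref{renII03}, Lemmas~\ref{operUI}, \ref{operUII}, \ref{furthlemmaI}, \ref{furthlemmaII} and Propositions~\ref{nonuniv}, \ref{strictconv}. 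Your derivation of (5) (apply Lemma~\ref{furthlemmaI} with $d=n-1$ to $P_{n}g$, use $\Vert f_{n}\Vert_{I}\geq\Vert f_{n}\Vert\geq\tfrac{7}{8}$, then square) is exactly the paper's computation, only organized slightly differently.
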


\begin{proof}
We realize first that the functions $ l_{d} : \mathcal{M} \to \mathbb{N} $ from Definition~\ref{ratconstr} are Borel. If $ l \in \mathbb{N} $, then the set of basic sequences $ e_{1}, e_{2}, \dots $ for which \eqref{rat02} holds with $ |\cdot|_{d} = |\cdot|_{d,l} $ forms a closed set. Therefore, the set of sequences with $ l_{d} = l $ is the difference of two closed sets.

Now, if a monotone basic sequence $ e_{1}, e_{2}, \dots $ is given, the properties of the system $ \{(Z_{\varphi}, \Vert \cdot \Vert_{\varphi})\}_{\varphi \in \mathbb{N}^{\mathbb{N}}} $ together with Lemma~\ref{ratlemma} guarantee that there is a $ \varphi = (\varphi_{1}, \varphi_{2}, \dots) $ such that $ (Z_{\varphi}, \Vert \cdot \Vert_{\varphi}) $ and $ (F, \Vert \cdot \Vert) $ coincide, including their bases. To show that the choice of $ \varphi $ can be Borel, it is sufficient to realize that $ \varphi $ can be constructed recursively in the way that $ \varphi_{d} $ depends only on $ l_{1}, \dots, l_{d} $. This is allowed by formula \eqref{rat07} which implies that the norm on $ \mathrm{span} \{ f_{1}, f_{2}, \dots , f_{d} \} $ is determined by the values $ l_{1}, \dots, l_{d} $.

Let us check the required properties. Notice that the spaces $ Z_{\varphi}^{I} $ and $ Z_{\varphi}^{II} $ coincide with $ (F, \Vert \cdot \Vert_{I}) $ and $ (F, \Vert \cdot \Vert_{II}) $. So, the properties easily follow from lemmata and propositions proven above.

Property (1) follows from \eqref{renI05} and \eqref{renII03} and property (2) follows from Lemma~\ref{operUI} and Lemma~\ref{operUII}. Property (3) follows from Proposition~\ref{nonuniv} and property (4) follows from Proposition~\ref{strictconv}. Finally, property (5) needs a little calculation. By Lemma~\ref{ratlemma}, we have
$$ \frac{7}{8} |z^{*}_{n}(z)| \leq |z^{*}_{n}(z)| \Vert z^{\varphi}_{n} \Vert_{\varphi} = \Vert P_{n}z - P_{n-1}z \Vert_{\varphi} \leq \Vert P_{n}z - P_{n-1}z \Vert_{\varphi, I}. $$
Using Lemma~\ref{furthlemmaI}, we obtain
\begin{align*}
\Vert P_{n}z \Vert_{\varphi, I}^{2} & \geq \Big( \Vert P_{n-1}z \Vert_{\varphi, I} + \frac{1}{2^{2(n-1)+7}} \Vert P_{n}z - P_{n-1}z \Vert_{\varphi, I} \Big)^{2} \\
 & \geq \Big( \Vert P_{n-1}z \Vert_{\varphi, I} + \frac{7}{2^{2n+8}} |z^{*}_{n}(z)| \Big)^{2} \\
 & \geq \Vert P_{n-1}z \Vert_{\varphi, I}^{2} + \Big( \frac{7}{2^{2n+8}} \Big)^{2} |z^{*}_{n}(z)|^{2}.
\end{align*}
The proof of the analogous inequality for $ Z_{\varphi}^{II} $ is the same, we just use Lemma~\ref{furthlemmaII} instead of Lemma~\ref{furthlemmaI}.
\end{proof}

\begin{proof}[Proof of Theorem \ref{thmmain}, Part 3.]
Suppose that $ \mathcal{C} $ is an analytic set of Banach spaces such that every member has a monotone basis. Let $ \mathcal{M}_{\mathcal{C}} $ be the subset of $ \mathcal{M} $ consisting of all normalized monotone bases of members of $ \mathcal{C} $. Since the mapping
$$ (e_{1}, e_{2}, \dots) \mapsto \overline{\mathrm{span}} \{ e_{1}, e_{2}, \dots \} $$
is Borel, the pre-image $ \mathcal{M}_{\mathcal{C}} $ of $ \mathcal{C} $ is analytic. Let $ \Theta $ be the mapping from Proposition~\ref{summ}. The image $ \Theta(\mathcal{M}_{\mathcal{C}}) $ is an analytic subset of $ \mathbb{N}^{\mathbb{N}} $. By Lemma~\ref{prunedtree}, there is an unrooted pruned tree $ T $ on $ \mathbb{N} \times \mathbb{N} $ such that $ \Theta(\mathcal{M}_{\mathcal{C}}) = p[T] $ where $ p $ denotes the projection on the first coordinate. Let us consider the collections
$$ (F_{\sigma}^{I}, \Vert \cdot \Vert_{\sigma, I}) = (Z_{p(\sigma)}^{I}, \Vert \cdot \Vert_{p(\sigma), I}), \quad (F_{\sigma}^{II}, \Vert \cdot \Vert_{\sigma, II}) = (Z_{p(\sigma)}^{II}, \Vert \cdot \Vert_{p(\sigma), II}), $$
$$ f^{\sigma}_{n} = z^{p(\sigma)}_{n}, \quad \sigma \in [T], \, n \in \mathbb{N}. $$
Finally, let $ B^{I} $ and $ B^{II} $ be the spaces constructed in Definition~\ref{def:B} for these collections. Note that property (5) from Proposition~\ref{summ} guarantees that the requirement \eqref{B001} is fulfilled.

Both spaces $ B^{I} $ and $ B^{II} $ contain an $ 1 $-complemented isometric copy of every $ X \in \mathcal{C} $. Indeed, a monotone basis of $ X $ is contained in $ \mathcal{M}_{\mathcal{C}} $, and so property (2) from Proposition~\ref{summ} is satisfied for some $ \varphi \in \Theta(\mathcal{M}_{\mathcal{C}}) = p[T] $. If we choose a $ \sigma \in [T] $ with $ p(\sigma) = \varphi $, then $ X $ has an $ 1 $-complemented isometric copy in $ F_{\sigma}^{I} $ and in $ F_{\sigma}^{II} $, and it is sufficient to apply Fact~\ref{factbsigma}.
 
If every $ X \in \mathcal{C} $ is non-universal (strictly convex), then $ B^{I} $ is non-universal ($ B^{II} $ is strictly convex). Indeed, in such a case, property (3) (property (4)) from Proposition~\ref{summ} implies that the spaces $ Z_{\varphi}^{I}, \varphi \in \Theta(\mathcal{M}_{\mathcal{C}}) $ ($ Z_{\varphi}^{II}, \varphi \in \Theta(\mathcal{M}_{\mathcal{C}}) $), and so the spaces $ F_{\sigma}^{I}, \sigma \in [T] $ ($ F_{\sigma}^{II}, \sigma \in [T] $), are non-universal (strictly convex), and it remains just to apply Proposition~\ref{nurotB}.
\end{proof}


\begin{thebibliography}{99}
\bibitem{argyrosdodos} S. A. Argyros and P. Dodos, \emph{Genericity and amalgamation of classes of Banach spaces}, Adv. Math. {\bf 209}, no. 2 (2007), 666--748.
\bibitem{bossard1} B. Bossard, \emph{Coanalytic families of norms on a separable Banach space}, Ill. J. Math. {\bf 40}, no. 2 (1996), 162--181.
\bibitem{bossard2} B. Bossard, \emph{A coding of separable Banach spaces. Analytic and coanalytic families of Banach spaces}, Fund. Math. {\bf 172}, no. 2 (2002), 117--152.
\bibitem{bourgain1} J. Bourgain, \emph{On separable Banach spaces, universal for all separable reflexive spaces}, Proc. Am. Math. Soc. {\bf 79}, no. 2 (1980), 241--246.
\bibitem{bourgain2} J. Bourgain, \emph{The Szlenk index and operators on $ C(K) $-spaces}, Bull. Soc. Math. Belg., S\'er. B {\bf 31} (1979), 87--117.
\bibitem{braga} B. M. Braga, \emph{On the complexity of some classes of Banach spaces and non-universality}, Czech. Math. J. {\bf 64}, no. 4 (2014), 1123--1147.
\bibitem{dfjp} W. J. Davis, T. Figiel, W. B. Johnson and A. Pe\l czy\'nski, \emph{Factoring weakly compact operators}, J. Funct. Anal. {\bf 17}, no. 3 (1974), 311--327.
\bibitem{diestel} J. Diestel, \emph{Sequences and series in Banach spaces}, Graduate Texts in Mathematics {\bf 92}, Springer-Verlag, 1984.
\bibitem{dodos} P. Dodos, \emph{On classes of Banach spaces admitting \textquotedblleft small\textquotedblright {} universal spaces}, Trans. Am. Math. Soc. {\bf 361}, no. 12 (2009), 6407--6428.
\bibitem{dodostopics} P. Dodos, \emph{Banach spaces and descriptive set theory: selected topics}, Lecture notes in mathematics {\bf 1993}, Springer, 2010.
\bibitem{dodosferenczi} P. Dodos and V. Ferenczi, \emph{Some strongly bounded classes of Banach spaces}, Fund. Math. {\bf 193}, no. 2 (2007), 171--179.
\bibitem{fhhmpz} M.~Fabian, P.~Habala, P.~H\'ajek, V.~Montesinos Santaluc\'ia, J.~Pelant and V.~Zizler, \emph{Functional analysis and infinite-dimensional geometry}, CMS Books in~Mathematics {\bf 8}, Springer, 2001.
\bibitem{feloro} V.~Ferenczi, A.~Louveau and C.~Rosendal, \emph{The complexity of classifying separable Banach spaces up to isomorphism}, J. London Math. Soc. {\bf 79}, no. 2 (2009), 323--345.
\bibitem{garbulinska1} J. Garbuli\'nska-W\c{e}grzyn, \emph{An isometrically universal Banach space with a monotone Schauder basis}, arXiv:1402.2660.
\bibitem{garbulinska2} J. Garbuli\'nska-W\c{e}grzyn, \emph{Isometric uniqueness of a complementably universal Banach space for Schauder decompositions}, Banach J. Math. Anal. {\bf 8}, no. 1 (2014), 211--220.
\bibitem{garkub} J. Garbuli\'nska-W\c{e}grzyn, W. Kubi\'s, \emph{A universal operator on the Gurari\u{\i} space}, J. Operator Theory {\bf 73}, no. 1 (2015), 143--158.
\bibitem{godefroy} G. Godefroy, \emph{Universal spaces for strictly convex Banach spaces}, Rev. R. Acad. Cien. Serie A. Mat. {\bf 100}, no. 1-2 (2006), 137--146.
\bibitem{godefroypersp} G. Godefroy, \emph{Descriptive set theory and the geometry of Banach spaces}, in Perspectives in Mathematical Sciences II, Pure Mathematics, N.S.N. Sastry Ed., World Scientific (2009), 63--82. 
\bibitem{godefroyprobl} G. Godefroy, \emph{Analytic sets of Banach spaces}, Rev. R. Acad. Cien. Serie A. Mat. {\bf 104}, no. 2 (2010), 365--374.
\bibitem{godkal} G. Godefroy and N. J. Kalton, \emph{Isometric embeddings and universal spaces}, Extr. Math. {\bf 22}, no. 2 (2007), 179--189.
\bibitem{james} R. C. James, \emph{A separable somewhat reflexive Banach space with nonseparable dual}, Bull. Amer. Math. Soc. {\bf 80}, no. 4 (1974), 738--743.
\bibitem{kechris} A. S. Kechris, \emph{Classical descriptive set theory}, Graduate Texts in Mathematics {\bf 156}, Springer-Verlag, 1995.
\bibitem{kubis} W. Kubi\'s, \emph{Metric-enriched categories and approximate Fra\"{i}ss\'e limits}, arXiv:1210.6506.
\bibitem{kurka} O. Kurka, \emph{Genericity of Fr\'echet smooth spaces}, Rev. R. Acad. Cien. Serie A. Mat. {\bf 106}, no. 2 (2012), 371--406.
\bibitem{lindenstrauss} J. Lindenstrauss, \emph{Notes on Klee's paper \textquotedblleft Polyhedral sections of convex bodies\textquotedblright }, Isr. J. Math. {\bf 4}, no. 4 (1966), 235--242.  
\bibitem{prus} S. Prus, \emph{Finite dimensional decompositions with $ p $-estimates and universal Banach spaces}, Bull. Pol. Acad. Sci., Math. {\bf 31}, no. 5--8 (1983), 281--288.
\bibitem{szankowski} A. Szankowski, \emph{An example of a universal Banach space}, Isr. J. Math. {\bf 11}, no. 3 (1972), 292--296.
\bibitem{szlenk} W. Szlenk, \emph{The non-existence of a separable reflexive Banach space universal for all separable reflexive Banach spaces}, Stud. Math. {\bf 30} (1968), 53--61.
\end{thebibliography}
\end{document}